\newcommand{\C}{\mathbb{C}}
\newcommand{\F}{\mathcal{F}}
\newtheorem{thm}{Theorem}[section]
\newtheorem*{thm-nl}{Theorem}
\newtheorem*{prop-nl}{Proposition}
\newtheorem{lem}[thm]{Lemma}
\def\PP{{\textbf P}}
\def\OO{\mathcal{O}}
\def\G{\mathcal{G}}
\def\F{\mathcal{F}}
\def\ms{\overline{\textbf{M}}}
\def\L{\mathcal{L}}
\def\cM{\mathcal{M}}
\def\cR{\mathcal{R}}
\def\cZ{\mathcal{Z}}
\def\cX{\mathcal{X}}
\def\H{\mathcal{H}}
\def\Pic0{{\rm Pic}^0(X)}
\def\mm{\overline{\mathcal{M}}}
\newtheorem{cor}[thm]{Corollary}
\newtheorem*{cor-nl}{Corollary}
\newtheorem*{conjecture-nl}{Conjecture}
\newtheorem*{quest-nl}{Question}
\newtheorem*{quests-nl}{Questions}
\newtheorem{prop}[thm]{Proposition}
\theoremstyle{remark}
\newtheorem{remark}[thm]{Remark}
\title{{The generic Green-Lazarsfeld Secant Conjecture}}
\author[G. Farkas]{Gavril Farkas}
\address{Humboldt-Universit\"at zu Berlin, Institut f\"ur Mathematik,  Unter den Linden 6
\hfill \newline\texttt{}
 \indent 10099 Berlin, Germany} \email{{\tt farkas@math.hu-berlin.de}}
\author[M. Kemeny]{Michael Kemeny}
\address{Humboldt-Universit\"at zu Berlin, Institut f\"ur Mathematik,  Unter den Linden 6
\hfill \newline\texttt{}
 \indent 10099 Berlin, Germany} \email{{\tt michael.kemeny@gmail.com}}
\begin{document}
\maketitle
\begin{abstract}
Using lattice theory on special $K3$ surfaces, calculations on moduli stacks of pointed curves and Voisin's proof of  Green's Conjecture on syzygies of canonical curves, we prove the Prym-Green Conjecture on the naturality of the resolution of a general Prym-canonical curve of odd genus, as well as (many cases of) the Green-Lazarsfeld Secant Conjecture on syzygies of non-special line bundles on general curves.
\end{abstract}
\section{Introduction}

For a smooth curve $C$ of genus $g$ and a very ample line bundle $L\in \mbox{Pic}^d(C)$, the Koszul cohomology groups $K_{p,q}(C,L)$ of $p$-th syzygies of weight $q$ of the embedded curve $\phi_L:C\rightarrow \PP^r$ are obtained from a minimal free resolution of the graded $S:=\mbox{Sym } H^0(C,L)$-module
$$\Gamma_C(L):=\bigoplus_{n\geq 0} H^0\bigl(C, L^{\otimes n}\bigr).$$
The \emph{graded Betti diagram} of $(C,L)$ is the table obtained by placing in the $p$th column and $q$th row the graded Betti number $b_{p,q}:=\mbox{dim } K_{p,q} (C,L)$. The resolution is said to be \emph{natural} if at most one Betti number along each diagonal is non-zero, which amounts to the statement $b_{p,2}\cdot b_{p+1,1}=0$, for all $p$. In two landmark papers, Voisin has shown that the resolution of a general canonical curve of each genus is natural, see \cite{voisin-even}, \cite{voisin-odd}.

\vskip 3pt

We now fix a Prym curve of genus $g$, that is, a pair $[C, \eta]$, where $C$ is a smooth curve of genus $g$ and $\eta\neq \OO_C$ is a $2$-torsion point, $\eta^{\otimes 2}=\OO_C$. Prym curves of genus $g$ form an irreducible moduli space $\cR_g$ whose birational geometry is discussed  in \cite{FL} and \cite{farkas-verra-moduli-theta}. It has been conjectured in \cite{chiodo-eisenbud-farkas-schreyer}  that the resolution of a general Prym-canonical curve $$\phi_{K_C\otimes \eta}:C\rightarrow \PP^{g-2}$$ is natural. As explained in \cite{chiodo-eisenbud-farkas-schreyer}, this statement, which came to be known as the \emph{Prym-Green Conjecture}, reduces to one single vanishing statement in even genus, namely $$K_{\frac{g}{2}-3,2}\bigl(C, K_C\otimes \eta\bigr)=0,$$ and to the following two vanishing statements in odd genus:
\begin{equation}\label{pgparatlan}
K_{\frac{g-3}{2},1}\bigl(C, K_C\otimes \eta\bigr)=0 \ \ \mbox{ and } \ \ K_{\frac{g-7}{2},2}\bigl(C, K_C\otimes \eta\bigr)=0.
\end{equation}
The even genus case of the Prym-Green Conjecture is divisorial in moduli, that is, the locus
$\cZ_g:=\Bigl\{[C, \eta]\in \cR_g: K_{\frac{g}{2}-3,2}\bigl(C,K_C\otimes \eta\bigr)\neq 0\Bigr\}$ is the degeneracy locus of a morphism between vector bundles of the same rank over $\cR_g$.
The conjecture has been verified computationally in \cite{chiodo-eisenbud-farkas-schreyer} for all $g\leq 18$, with the exceptions of $g=8, 16$. The Prym-Green Conjecture is expected to fail for genera $g=2^n$, with $n\geq 3$, for reasons which are mysterious. We prove the following:

\begin{thm}\label{pgodd}
The Prym-Green Conjecture holds for a general Prym curve of odd genus.
\end{thm}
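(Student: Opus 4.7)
The plan is to specialize a general Prym curve $[C,\eta]$ of odd genus $g=2k+1$ to a Prym curve arising from a Nikulin $K3$ surface, where the Prym-canonical bundle extends to a global line bundle on the surface, and then to deduce the vanishings \eqref{pgparatlan} from Voisin's proof of Green's Conjecture for canonical curves.

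First I would construct a polarized Nikulin surface $(X,H,e)$: a $K3$ surface with an $8$-nodal symplectic involution $\iota$ whose associated moduli prescribes a rank-$2$ Picard lattice generated by a big and nef class $H$ with $H^2=2g-2$ and a Nikulin class $e$. Any smooth $C\in|H|$ disjoint from the fixed nodes inherits from $\iota$ a canonical étale double cover, and hence a nontrivial $2$-torsion line bundle $\eta\in\mbox{Pic}^0(C)$, equal to $e|_C$, so that $K_C\otimes\eta=L|_C$ for $L:=H+e$. Surjectivity of the period map for $K3$ surfaces of Picard rank two produces such $X$ with generic transcendental lattice, and a dimension count on the natural morphism from the stack of Nikulin sections to $\cR_g$ shows that this map is dominant in odd genus. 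Consequently, \eqref{pgparatlan} reduces to the analogous vanishings for a general $C\in|H|$ on a general Nikulin surface.

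I would then transfer the Koszul problem from $C$ to the ambient $K3$. The short exact sequence $0\to L(-C)\to L\to L|_C\to 0$, in which $L(-C)=e$, combined with the vanishing of the higher cohomology of $e$ and $L$ on $X$ and the $K3$ condition $H^1(X,\OO_X)=0$, identifies $K_{p,q}(C,K_C\otimes\eta)$ with $K_{p,q}(X,L)$ in the two bidegrees of interest; Serre duality on $X$ then converts the weight-$2$ vanishing into a weight-$1$ statement. Following the Ein-Lazarsfeld-Voisin framework, $K_{p,1}(X,L)$ is computed by cohomology of a tautological secant bundle on the Hilbert scheme $X^{[p+1]}$, and Voisin's strategy of producing test syzygies from zero-dimensional subschemes of $X$, carried out in \cite{voisin-even,voisin-odd} for generic $K3$ surfaces of genus $g$, should be adapted to the Nikulin setting with the auxiliary class $e$ tracking the Prym twist.

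The main obstacle I anticipate is executing Voisin's Hilbert-scheme argument on a Nikulin $K3$ rather than a generic one: because $\mbox{Pic}(X)$ is constrained to contain $e$, one must control the cohomology of the relevant tautological sheaves twisted by $e$, exploit the $\iota$-equivariance imposed by the Nikulin involution, and use the odd parity of $g$ to rule out accidental syzygies coming from the extra lattice class. A secondary issue is the dominance argument itself: one must verify that the specialization lands at a generic point of $\cR_g$ rather than on a Noether-Lefschetz-type subvariety where Koszul ranks might jump, a sensitivity mirrored by the expected failure of the even-genus statement for $g=2^n$ and therefore demanding careful control in odd genus as well.
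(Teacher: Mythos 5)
Your opening move --- specializing to a curve on a Nikulin surface whose Prym-canonical bundle is the restriction of a line bundle on the surface --- is exactly the paper's starting point. But two of your subsequent steps do not hold up.

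First, the claim that the Nikulin locus dominates $\cR_g$ is false for large odd $g$: the moduli space $\mathcal{F}_g^{\mathfrak{N}}$ of Nikulin surfaces is $11$-dimensional, so pairs (surface, curve in the linear system) depend on $11+g$ parameters, which is less than $\dim \cR_g=3g-3$ once $g\geq 9$. This error is harmless only because dominance is not needed: the non-vanishing loci in (\ref{pgparatlan}) are closed by semicontinuity and $\cR_g$ is irreducible, so exhibiting a \emph{single} Prym curve with the required vanishings suffices. You should argue this way rather than via a dimension count.

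Second, and more seriously, the asserted identification $K_{p,q}(C,K_C\otimes\eta)\cong K_{p,q}(X,L)$ does not follow from the cohomology vanishings you list. Green's restriction sequence gives
$$\cdots\to K_{p,q}(X,L)\to K_{p,q}(C,L_C)\to K_{p-1,q+1}\bigl(X,\OO_X(-C),L\bigr)\to\cdots,$$
and the mixed Koszul group on the right is not killed by $H^1$-vanishings of $e$ and $L$; controlling it is precisely the hard part. The paper handles the two terms of this sequence separately: the group $K_{p,q}(X,L)$ is identified (Lefschetz) with $K_{p,q}(D,K_D)$ for a smooth $D$ in the Prym-canonical linear system, a genus-$(g-2)$ curve shown to be Brill--Noether general, so its vanishing is the already-known Green's Conjecture for curves on $K3$ surfaces (\cite{voisin-even}, \cite{voisin-odd}, \cite{aprodu-farkas-green}) --- there is no need to rerun Voisin's Hilbert-scheme argument in the Nikulin setting, which is what your ``main obstacle'' paragraph proposes without executing. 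The mixed group is an MRC-type statement $K_{p-1,q+1}(D,-L_D,K_D)=0$, which the paper proves by degenerating to a Nikulin surface carrying an elliptic pencil $E$ so that $D$ becomes hyperelliptic, $\bigwedge^j M_{K_D}$ splits into line bundles, and the vanishing reduces to explicit lattice computations. Your proposal contains no mechanism for this term, so as written it does not yield the theorem.
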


In particular, via Theorem \ref{pgodd}, we completely determine the shape of the resolution of a general Prym-canonical curve $C\subset \PP^{g-2}$ of odd genus $g=2i+5$. Precisely, if $S:=\mathbb C[x_0, \ldots, x_{g-2}]$, then the Prym-canonical ideal $I_{C}\subset S$ has the following resolution:

$$0\longleftarrow I_C\longleftarrow S(-2)^{b_{1,1}}\longleftarrow \cdots \longleftarrow S(-i)^{b_{i-1,1}}\longleftarrow S(-i-1)^{b_{i,1}}\oplus S(-i-2)^{b_{i,2}}\longleftarrow $$
$$\longleftarrow S(-i-3)^{b_{i+1,2}}\longleftarrow \cdots \longleftarrow S(-2i-4)^{b_{2i+2,2}}\longleftarrow 0,$$
where $$b_{p,1}=\frac{p(2i-2p+1)}{2i+3}{2i+4\choose p+1} \ \  \mbox{ for } p\leq i,\  \mbox{ and } \ \  b_{p,2}=\frac{(p+1)(2p-2i+1)}{2i+3}{2i+4\choose p+2} \  \ \  \mbox{ for } p\geq i.$$

In particular, the resolution is natural but fails to be \emph{pure}, precisely in the middle. In this sense, the resolution of a general Prym-canonical curve of \emph{odd} genus has the same shape as that of a general canonical curve of \emph{even} genus \cite{voisin-even}.

\vskip 3pt

The proof of Theorem \ref{pgodd} is by specialization to \emph{Nikulin surfaces}. A Nikulin surface  \cite{farkas-verra-moduli-theta}, \cite{vGS} is a $K3$ surface $X$ equipped with a double cover $f:\widetilde{X}\rightarrow X$ branched along eight disjoint rational curves $N_1, \ldots, N_8$. In particular, the sum of the exceptional curves is even, that is, there exists a class $\mathfrak{e}\in \mbox{Pic}(X)$ such that $\mathfrak{e}^{\otimes 2}=\OO_X(N_1+\cdots+N_8)$. If $C\subset X$ is a smooth curve of genus $g$ disjoint from the curves $N_1, \ldots, N_8$, then the restriction $\mathfrak{e}_C\in \mbox{Pic}^0(C)$ is a non-trivial point of order two, that is, $[C, \mathfrak{e}_C]\in \cR_g$.

\begin{thm}\label{hypnik}
Let $X$ be a general Nikulin surface endowed with a curve $C\subset X$ of odd genus $g\geq 11$, such that $C\cdot N_j=0$, for $j=1, \ldots, 8$. Then
$$ K_{\frac{g-3}{2},1}\bigl(C,K_C \otimes \mathfrak{e}_C\bigr)=0 \ \mbox{ and }\  K_{\frac{g-7}{2},2}\bigl(C,K_C \otimes \mathfrak{e}_C\bigr)=0.$$
In particular, the Prym-Green Conjecture holds generically on $\cR_g$.
\end{thm}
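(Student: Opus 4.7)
The plan is to reduce the two Koszul vanishings on $(C, K_C \otimes \mathfrak{e}_C)$ to syzygy statements on the ambient K3 surface $X$, and from there to the canonical syzygies of an auxiliary curve on $X$ to which Voisin's theorems apply. The final assertion — that Prym-Green holds generically on $\cR_g$ — does not require the Nikulin-to-Prym moduli map to dominate $\cR_g$, since upper semicontinuity of the Koszul Betti numbers converts a single Prym curve with the required vanishings into an open dense subset.

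The first step is the reduction from $C$ to $X$. Since $C \cdot N_j = 0$ and $K_X = \OO_X$, adjunction gives $K_C \otimes \mathfrak{e}_C = (L \otimes \mathfrak{e})|_C$, where $L := \OO_X(C)$. I would then use Green-type hyperplane section exact sequences of the shape
\[
\cdots \to K_{p,q}(X, \mathfrak{e}; L) \to K_{p,q}\bigl(C, (L \otimes \mathfrak{e})|_C\bigr) \to K_{p-1, q+1}(X, \mathfrak{e} \otimes L^{-1}; L) \to \cdots
\]
together with Kodaira/Kawamata--Viehweg vanishing on $X$ and the Nikulin relation $\mathfrak{e}^{\otimes 2} = \OO_X(N_1 + \cdots + N_8)$ to identify the target Prym-Koszul groups of $C$ with purely surface-level Koszul groups on $X$ twisted by $\mathfrak{e}$.

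The second step exploits the Picard lattice of a generic Nikulin K3 (see \cite{vGS}, \cite{farkas-verra-moduli-theta}). I would search for a decomposition $L \equiv A + B$ in $\mbox{Pic}(X)$, possibly after specializing within the Nikulin moduli so that an extra class appears, such that a general $\Sigma \in |A|$ is smooth of a well-chosen auxiliary genus $h$, and such that $(B \otimes \mathfrak{e})|_\Sigma$ is the canonical bundle $K_\Sigma$ (or a close variant). A second Green-type restriction then converts the surface Koszul cohomology of $X$ into the canonical Koszul cohomology $K_{p,q}(\Sigma, K_\Sigma)$, with $h$ tuned so that the indices $\frac{g-3}{2}$ and $\frac{g-7}{2}$ fall within the naturality range covered by Voisin's theorems. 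Brill--Noether genericity of $\Sigma$ follows from Lazarsfeld's theorem (or its Nikulin-adapted analogue), so \cite{voisin-even}, \cite{voisin-odd} supply the required canonical-strand vanishings on $\Sigma$; unwinding yields $K_{(g-3)/2,1}(C, K_C \otimes \mathfrak{e}_C) = 0$ and $K_{(g-7)/2,2}(C, K_C \otimes \mathfrak{e}_C) = 0$.

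The main obstacle will be the lattice-theoretic step: the generic Nikulin lattice is very constrained, and locating a class $A$ whose numerology simultaneously aligns with the \emph{linear} strand index $\frac{g-3}{2}$ and the \emph{quadratic} strand index $\frac{g-7}{2}$ is delicate, and may force a specialization to a non-generic point of the Nikulin moduli. The quadratic vanishing $K_{(g-7)/2, 2} = 0$ is the harder of the two; the linear vanishing $K_{(g-3)/2, 1} = 0$ essentially reflects projective normality of the Prym-canonical embedding and should follow from a comparatively direct cohomology estimate on $X$.
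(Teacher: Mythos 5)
Your overall strategy (restrict to the ambient K3, combine lattice theory with Voisin's theorem, then spread out by semicontinuity and irreducibility of $\cR_g$) is the right one, and you correctly anticipate that a specialization inside the Nikulin moduli space will be needed. But there are two concrete problems. First, your restriction sequence is not a correct instance of Green's hyperplane-section theorem. Applying that theorem to $C\in|L|$ with polarization $L$ and twisting sheaf $\mathfrak{e}$ produces the groups $K_{p,q}(C,\mathfrak{e}_C,K_C)$, i.e. syzygies of the module $\bigoplus_q H^0(C,\mathfrak{e}_C\otimes K_C^{\otimes q})$ over $\mathrm{Sym}\, H^0(C,K_C)$ --- not the group $K_{p,q}\bigl(C,K_C\otimes\mathfrak{e}_C\bigr)$ of the Prym-canonical embedding, which lives over $\mathrm{Sym}\, H^0(C,K_C\otimes\mathfrak{e}_C)$; your displayed middle term conflates the two. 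To get the right middle term you must take as polarization on $X$ the very ample class $H:=L-\mathfrak{e}$ (not $L\otimes\mathfrak{e}$, whose linear system contains $N_1+\cdots+N_8$ as a fixed part) and restrict along $C\in|L|$, obtaining $K_{p,q}(X,H)\to K_{p,q}(C,H_C)\to K_{p-1,q+1}(X,-L,H)$ with $H_C=K_C\otimes\mathfrak{e}_C$.

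Second, and more seriously, your plan only has teeth against the first term of this sequence. The group $K_{p,q}(X,H)$ is indeed, by the Lefschetz principle, a canonical Koszul group of a curve $D\in|H|$ of genus $2i+1$ (where $g=2i+3$), and proving that $D$ is Brill--Noether general on the generic Nikulin surface lets Green's conjecture for curves on $K3$ surfaces kill it at the two relevant spots. But the third term $K_{p-1,q+1}(X,-L,H)\cong K_{p-1,q+1}(D,-L_D,K_D)$ is a \emph{twisted} Koszul group --- a statement of Minimal Resolution Conjecture type about the decidedly non-general line bundle $L_D$ on $D$ --- and no choice of auxiliary curve $\Sigma$ with $(B\otimes\mathfrak{e})|_\Sigma=K_\Sigma$ converts it into a canonical-strand vanishing accessible to \cite{voisin-even}, \cite{voisin-odd}. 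This term is the actual crux: one specializes to a Nikulin surface whose Picard lattice contains an extra elliptic pencil $E$ with $(E\cdot L)=2$, so that $D$ becomes hyperelliptic, $M_{K_D}$ splits into copies of $\OO_D(-E_D)$, and the needed vanishings $H^0\bigl(D,\bigwedge^{i-1}M_{K_D}\otimes K_D^{\otimes 2}\otimes L_D^{\vee}\bigr)=0$ and $H^1\bigl(D,\bigwedge^{i-2}M_{K_D}\otimes K_D^{\otimes 2}\otimes L_D^{\vee}\bigr)=0$ reduce to line-bundle cohomology on the surface, computed by explicit lattice arguments. Your proposal contains no mechanism for this. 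Relatedly, your closing assessment of difficulty is off: $K_{(g-3)/2,1}=K_{i,1}$ sits in the middle of the linear strand and is one of the two deep naturality statements, not a projective normality estimate; both vanishings require the full specialization argument.
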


\vskip 7pt

The techniques developed for the Prym-Green Conjecture, in particular the use of $K3$ surfaces, we then use to study more generally syzygies of non-special line bundles on curves. In their influential paper \cite{green-lazarsfeld-projective}, Green and Lazarsfeld,  building on Green's Conjecture \cite{green-koszul} on syzygies of canonical curves, proposed the \emph{Secant Conjecture}, predicting that the shape of the resolution of a line bundle of sufficiently high degree is determined by its higher order ampleness properties: Precisely, if $L$ is a globally generated degree $d$ line bundle on a  curve $C$ of genus $g$ such that
\begin{equation}\label{ineq1}
d\geq 2g+p+1-2h^1(C,L) -\mbox{Cliff}(C),
\end{equation}
then $L$ fails property $(N_p)$ if and only if $L$ is not $(p+1)$-very ample, that is, the map $\phi_L:C\rightarrow \PP^{r}$ induced by the linear series $|L|$ embeds $C$ with a $(p+2)$-secant $p$-plane.  The case $h^1(C,L) \neq 0$ of the  Secant Conjecture easily reduces to the ordinary Green's Conjecture, that is, to the case $L=K_C$, see \cite{koh-stillman}.
If $L$ carries a $(p+2)$-secant $p$-plane, it is straightforward to see \cite{green-lazarsfeld-projective} that $K_{p,2}(C,L)\neq 0$, hence the Secant Conjecture concerns the converse implication. The Secant Conjecture is a refinement of Green's result \cite{green-koszul}, asserting that every line bundle $L\in \mbox{Pic}^{d}(C)$ with $d \geq 2g+p+1$  satisfies property $(N_p)$. Thus to study the Secant Conjecture for general curves, it suffices to consider the case of \emph{non-special} line bundles $L\in \mbox{Pic}^d(C)$, when $d\leq 2g+p$. Our first result answers completely this question in the case when both $C$ and $L$ are general:

\begin{thm}\label{secgen}
The Green-Lazarsfeld Secant Conjecture holds for a general curve $C$ of genus $g$ and a general line bundle $L$ of degree $d$ on $C$.
\end{thm}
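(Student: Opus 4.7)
The plan is to prove Theorem \ref{secgen} by the specialization method employed for Theorem \ref{hypnik}: we place $(C,L)$ on a $K3$ surface with a carefully chosen rank-two Picard lattice and reduce the vanishing of $K_{p,2}(C,L)$ to Voisin's proof of Green's Conjecture.

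\textbf{Reduction to the non-special case.} By \cite{koh-stillman}, the case $h^1(C,L)\neq 0$ of the Secant Conjecture reduces to Green's Conjecture for $C$, which for a general curve holds by \cite{voisin-even, voisin-odd}. We may thus assume $h^1(C,L)=0$. Green's classical vanishing \cite{green-koszul} takes care of $d\geq 2g+p+1$, and since a general $C$ of genus $g$ has $\mathrm{Cliff}(C)=\lfloor (g-1)/2\rfloor$, the interesting range becomes $2g+p+1-\lfloor (g-1)/2\rfloor\leq d\leq 2g+p$. By upper-semicontinuity of Koszul cohomology, it suffices, for each triple $(g,d,p)$ in this range, to produce one pair $(C,L)$ with $L\in \mathrm{Pic}^d(C)$ general, lying in the open locus where $C\hookrightarrow \PP^{d-g}$ carries no $(p+2)$-secant $p$-plane, for which $K_{p,2}(C,L)=0$.

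\textbf{$K3$ specialization.} I would construct, via surjectivity of the period map, a polarized $K3$ surface $X$ with $\mathrm{Pic}(X)=\ZZ\cdot H \oplus \ZZ \cdot C$, where $C^2=2g-2$ and the intersection numbers $H\cdot C$, $H^2$ are tuned so that $L:=H|_C$ has degree $d$ and $H$ is very ample on $X$. The rank-two lattice must be chosen so that no effective class $D\in \mathrm{Pic}(X)$ with $h^0(D)\geq 2$ satisfies $D\cdot C<\mathrm{Cliff}(C)+2$, which by Lazarsfeld's theorem forces a general $C\in |C|$ to have maximal Clifford index $\lfloor (g-1)/2\rfloor$; and so that no decomposition of $H$ restricts to an effective divisor on $C$ spanning a $(p+2)$-secant $p$-plane in the embedding defined by $|L|$.

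\textbf{Transfer and conclusion.} On $X$, Green's hyperplane restriction theorem, together with the vanishings of $H^i(X,H-C)$ controlled by the chosen lattice, relates $K_{p,2}(C,L)$ to Koszul groups on $(X,H)$. These are in turn controlled, via the standard duality on a polarized $K3$, by the Koszul cohomology of a general smooth curve $C'\in |H|$, which by Lazarsfeld's theorem also has maximal Clifford index. Voisin's theorem \cite{voisin-even, voisin-odd} applied to $C'$ then delivers the required vanishing, which transfers back to $K_{p,2}(C,L)=0$; semicontinuity concludes.

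\textbf{Main obstacle.} The crux is the lattice-theoretic construction. For every triple $(g,d,p)$ in the non-special Secant range, one must verify that a rank-two lattice meeting all positivity, Brill--Noether and non-secancy constraints exists and admits a primitive embedding into the $K3$ lattice, and that no small effective class in $\mathrm{Pic}(X)$ forces a spurious $(p+2)$-secant $p$-plane on the embedded $C$. I expect the most delicate analysis at the boundary $d=2g+p+1-\lfloor (g-1)/2\rfloor$ and in low-genus sporadic cases, where a supplementary degeneration (for instance to a reducible $K3$ in the style of Voisin's original approach to Green's Conjecture) may be needed to close the argument.
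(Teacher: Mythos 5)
Your overall strategy -- reduce to non-special $L$, specialize $(C,L)$ to a $K3$ surface with a rank-two Picard lattice, and transfer the vanishing via Green's hyperplane restriction theorem -- is indeed the paper's strategy (the paper additionally uses Proposition \ref{reduction-by-one} to shrink the range of triples $(g,d,p)$ to the two extremal families $g=2i+1$, $d=2p+2i+4$ and $g=2i$, $d=2p+2i+3$ with $p\geq i-1$, rather than attacking every degree directly). However, your transfer step has a genuine gap. Green's long exact sequence reads
$$\cdots \rightarrow K_{p,2}(X,H)\rightarrow K_{p,2}(C,H_C)\rightarrow K_{p-1,3}\bigl(X,\OO_X(-C),H\bigr)\rightarrow \cdots,$$
so to kill the middle term you must kill \emph{both} outer terms. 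Your proposal only addresses the left-hand one: that group is indeed isomorphic to $K_{p,2}(D,K_D)$ for $D\in|H|$ by the Lefschetz principle, and vanishes once a lattice computation shows $\mathrm{Cliff}(D)=p+1$, by Green's Conjecture for curves on $K3$ surfaces (\cite{voisin-even}, \cite{voisin-odd}, \cite{aprodu-farkas-green}). But the right-hand group $K_{p-1,3}(X,\OO_X(-C),H)\cong K_{p-1,3}(D,-C_D,K_D)$ is a \emph{twisted} Koszul group, equivalent to the vanishing $H^1\bigl(D,\bigwedge^p M_{K_D}\otimes K_D^{\otimes 2}(-C_D)\bigr)=0$. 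This is a statement of Minimal Resolution Conjecture type about the particular line bundle $C_D$ on $D$, and it does not follow from Green's Conjecture for $D$ or from any Clifford-index computation; nothing in your lattice conditions addresses it.

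The paper closes this gap by a second, independent degeneration: it embeds the rank-two lattice into a rank-three lattice containing an elliptic pencil $E$ with $E\cdot H=2$, so that on the special surface every $D\in|H|$ becomes hyperelliptic and $\bigwedge^p M_{K_D}$ splits into a direct sum of copies of $\OO_D(-pE_D)$. The required $H^1$-vanishing then reduces to cohomology of explicit line bundles on the surface, verified by lattice arithmetic, and is carried back to the general rank-two surface by semicontinuity. Your "Main obstacle" paragraph correctly senses that the lattice construction is delicate, but misidentifies the difficulty as a secancy condition (which is automatic for general $L$ once $d\geq g+2p+3$); the real crux is this twisted vanishing, and without an argument for it your proof does not close.
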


For an integer $d\leq 2g+p$ and a non-special line bundle $L\in \mbox{Pic}^d(C)$, we consider the variety
$$V_{p+2}^{p+1}(L):=\bigl\{D\in C_{p+2}: h^0(C, L(-D))\geq d-g-p\bigr\}$$
of $(p+2)$-secants $p$-planes to $C$ in the embedding $\phi_L:C\rightarrow \PP^{d-g}$. The line bundles  possessing such a secant are those lying in a translate \emph{difference variety} in the Jacobian, precisely
\begin{equation}\label{diffv}
V_{p+2}^{p+1}(L)\neq \emptyset\ \Leftrightarrow \  L-K_C\in C_{p+2}-C_{2g-d+p}.
\end{equation}






For a general curve $C$, a general line bundle $L\in \mbox{Pic}^d(C) $ verifies $V_{p+2}^{p+1}(L) = \emptyset$ if and only if
\begin{equation}\label{ineq2}
d\geq g+2p+3.
\end{equation}
In particular, the Secant Conjecture (and Theorems \ref{secgen} and \ref{hatareset}) are vacuous  if inequality (\ref{ineq2}) is not fulfilled. Of particular importance is the \emph{divisorial case} of the Secant Conjecture, when the variety $\Bigl\{[C,L]\in \mathfrak{Pic}_g^d: K_{p,2}(C,L)\neq 0\Bigr\}$ is the degeneration locus of a morphism between vector bundles of the same rank over the universal degree $d$ universal Jacobian $\mathfrak{Pic}_g^d$. This divisorial case can be recognized by requiring that both inequalities (\ref{ineq1}) and (\ref{ineq2}) be equalities. We find
$$g=2i+1,  \  \ d=2g=4i+2, \  \ p=i-1.$$ In this case, we establish the Secant Conjecture in its strongest form, that is, for all $C$ and $L$:

\begin{thm}\label{diveset}
The Secant Conjecture  holds for every smooth curve $C$ of odd genus $g$ and every line bundle $L\in \mathrm{Pic}^{2g}(C)$, that is, one has the equivalence
$$K_{\frac{g-3}{2},2}(C,L)\neq 0  \ \Leftrightarrow \ \mathrm{Cliff}(C)<\frac{g-1}{2} \  \  \mathrm{or} \  \  \ L-K_C \in C_{\frac{g+1}{2}}-C_{\frac{g-3}{2}}.$$
\end{thm}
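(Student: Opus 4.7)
Set $p:=\frac{g-3}{2}$. The hypotheses single out the divisorial case of the Secant Conjecture: the Koszul locus
$$\mathcal{D}_{\mathrm{Kosz}}:=\Bigl\{[C,L]\in\mathfrak{Pic}_g^{2g}:\ K_{p,2}(C,L)\neq 0\Bigr\}$$
is the degeneracy locus of a morphism between two vector bundles of the same rank on the universal Picard $\mathfrak{Pic}_g^{2g}$, and by Theorem \ref{secgen} it is a proper sublocus. It is therefore an effective divisor whose virtual class is given by Porteous. The plan is to identify $\mathcal{D}_{\mathrm{Kosz}}$ scheme-theoretically with $\mathcal{D}_{\mathrm{BN}}+\mathcal{D}_{\mathrm{Sec}}$, where $\mathcal{D}_{\mathrm{BN}}$ denotes the pullback of the Clifford divisor $\{[C]\in\mathcal{M}_g:\mathrm{Cliff}(C)<\frac{g-1}{2}\}$ and $\mathcal{D}_{\mathrm{Sec}}:=\{[C,L]:L-K_C\in C_{(g+1)/2}-C_{(g-3)/2}\}$.

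The direction $(\Leftarrow)$ amounts to the effective containment $\mathcal{D}_{\mathrm{BN}}+\mathcal{D}_{\mathrm{Sec}}\subseteq \mathcal{D}_{\mathrm{Kosz}}$. The secant contribution is the standard construction of \cite{green-lazarsfeld-projective} which extracts a nonzero class in $K_{p,2}(C,L)$ directly from a $(p+2)$-secant $p$-plane (characterised by (\ref{diffv})). The Clifford contribution uses Voisin's proof of Green's Conjecture \cite{voisin-even,voisin-odd}: the assumption $\mathrm{Cliff}(C)<\frac{g-1}{2}=p+1$ forces $K_{p,2}(C,K_C)\neq 0$, and this non-vanishing propagates to $L$ via a comparison of Koszul complexes, or equivalently via the Green-Lazarsfeld non-vanishing theorem applied to a decomposition $L=A+(L-A)$ with $A$ computing $\mathrm{Cliff}(C)$.

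For the converse $(\Rightarrow)$, which is the main content of the theorem, I would compare divisor classes in $\mathrm{Pic}(\mathfrak{Pic}_g^{2g})$. The class $[\mathcal{D}_{\mathrm{Kosz}}]$ is computed by applying Porteous to the morphism of bundles that cuts out the Koszul locus; the Chern classes of those bundles are obtained via Grothendieck-Riemann-Roch for the universal curve over $\mathfrak{Pic}_g^{2g}$. The class $[\mathcal{D}_{\mathrm{Sec}}]$ follows from the difference-variety description (\ref{diffv}) together with the universal symmetric product machinery, while $[\mathcal{D}_{\mathrm{BN}}]$ is the classical Brill-Noether divisor class of Eisenbud-Harris-Mumford. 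An equality of classes $[\mathcal{D}_{\mathrm{Kosz}}]=[\mathcal{D}_{\mathrm{BN}}]+[\mathcal{D}_{\mathrm{Sec}}]$, combined with the effective containment of the previous paragraph, forces the two effective divisors to coincide, thereby yielding the theorem for every $(C,L)$, not merely for the generic pair.

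The main obstacle will be precisely this intersection-theoretic identity on $\mathfrak{Pic}_g^{2g}$: all three classes must be expanded in the natural generators of $\mathrm{Pic}(\mathfrak{Pic}_g^{2g})$ (the Hodge class, the boundary classes, the theta class of the universal Jacobian, and the tautological class attached to $L$) and matched coefficient by coefficient. This is a heavy but entirely tautological computation, and the availability of Theorem \ref{secgen} as a sanity check (the class equality must hold at the generic point) pins down the possible ambiguity and reduces the problem to a finite matching of characteristic numbers.
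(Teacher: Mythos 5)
Your overall strategy is the one the paper follows: use the generic vanishing of Theorem \ref{secgen} to see that the Koszul locus is a genuine divisor, establish the effective containment of the secant and Clifford loci, and then force equality by a divisor-class computation (the paper carries this out on $\cM_{g,2g}$ via the Abel--Jacobi map rather than on $\mathfrak{Pic}_g^{2g}$, because there $\mathrm{Pic}$ is generated by $\lambda$ and the $\psi_j$ and the class $[\mathfrak{Syz}]$ can be computed by Grothendieck--Riemann--Roch, which is Theorem \ref{osztaly}). However, there is a genuine gap in how you propose to close the argument. The class identity you would find is \emph{not} $[\mathcal{D}_{\mathrm{Kosz}}]=[\mathcal{D}_{\mathrm{BN}}]+[\mathcal{D}_{\mathrm{Sec}}]$ but rather, in the paper's notation and with $g=2i+1$,
$$[\mathfrak{Syz}]=[\mathfrak{Sec}]+i\cdot[\mathfrak{Hur}],$$
i.e.\ the Brill--Noether (Hurwitz) divisor enters with coefficient $i$. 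Your containment argument only gives that the Koszul degeneracy locus contains the Clifford divisor with multiplicity $\geq 1$ (one nonzero syzygy from Green--Lazarsfeld nonvanishing), so the difference $\mathcal{D}_{\mathrm{Kosz}}-\mathcal{D}_{\mathrm{Sec}}-\mathcal{D}_{\mathrm{BN}}$ is an effective divisor of class $(i-1)[\mathcal{D}_{\mathrm{BN}}]\neq 0$; this does not pin down its support, so you cannot conclude that the Koszul divisor has no further components. The missing ingredient is the multiplicity estimate of Proposition \ref{scroll}: if $\mathrm{gon}(C)\leq i+1$ then $\dim K_{i-1,2}(C,L)\geq i$ for every $L\in\mathrm{Pic}^{2g}(C)$, proved by exhibiting $i$ independent syzygies among the quadric equations of the scroll swept out by the gonality pencil. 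Only with this does $\mathcal{D}_{\mathrm{Kosz}}-\mathcal{D}_{\mathrm{Sec}}-i\,\mathcal{D}_{\mathrm{BN}}$ become an effective divisor of class zero, which then vanishes.

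Two smaller points. First, the claim that $K_{p,2}(C,K_C)\neq 0$ ``propagates to $L$ via a comparison of Koszul complexes'' is not a standard operation and I do not see how to make it work; the correct mechanism is the one you mention in passing, namely Green--Lazarsfeld nonvanishing applied directly to a decomposition $L=A\otimes B$ with $A$ the gonality pencil, which together with the identity (\ref{diffbetti}) gives $K_{i-1,2}(C,L)\neq 0$ --- but, as above, only with multiplicity one. Second, working on $\mathfrak{Pic}_g^{2g}$ itself makes the Picard-group bookkeeping considerably harder than on $\cM_{g,2g}$; this is a technical rather than logical issue, but it is why the paper pulls everything back to pointed curves.
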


In the extremal case of the Secant Conjecture in even genus, that is, when $\mbox{deg}(L)=2g+1$, we prove the expected statement for \emph{general} curves and arbitrary line bundles on them:

\begin{thm}\label{even}
The Secant Conjecture holds for a Brill--Noether--Petri general curve $C$ of even genus and every line bundle $L\in \mathrm{Pic}^{2g+1}(C)$, that is,
$$K_{\frac{g}{2}-1,2}(C,L)\neq 0 \ \Leftrightarrow \ L-K_C\in C_{\frac{g}{2}+1}-C_{\frac{g}{2}-2}.$$
\end{thm}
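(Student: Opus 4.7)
The implication $(\Leftarrow)$ is the classical observation of Green--Lazarsfeld: if $L\cong K_C(D-E)$ with $D\in C_{g/2+1}$ and $E\in C_{g/2-2}$ effective, then by Riemann--Roch $h^0(C,L(-D))=h^0(K_C-E)\geq g/2+2$, so under the embedding $\phi_L:C\hookrightarrow\PP^{g+1}$ the $g/2+1$ points of $D$ span at most a $(g/2-1)$-plane, and the standard syzygy construction produces a non-zero class in $K_{g/2-1,2}(C,L)$. The work lies in the converse.

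To prove it, I would compare two effective divisors on the universal Picard stack $\mathfrak{Pic}_g^{2g+1}$ over the Brill--Noether--Petri general locus in $\mm_g$. The locus
\[\cZ_{\mathrm{syz}}:=\bigl\{(C,L):K_{g/2-1,2}(C,L)\neq 0\bigr\}\]
is the virtual degeneracy locus of a morphism between two vector bundles of equal rank, while
\[\cZ_{\mathrm{diff}}:=\bigl\{(C,L):L-K_C\in C_{g/2+1}-C_{g/2-2}\bigr\}\]
is an irreducible effective divisor. A Chern-class calculation should show that the virtual class of $\cZ_{\mathrm{syz}}$ coincides with $[\cZ_{\mathrm{diff}}]$, and the easy direction gives $\cZ_{\mathrm{diff}}\subseteq\cZ_{\mathrm{syz}}$. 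Hence it suffices to exhibit a single pair $(C_0,L_0)$ with $C_0$ BN--Petri general of genus $g$, $L_0\in\mathrm{Pic}^{2g+1}(C_0)\setminus\cZ_{\mathrm{diff}}$, and $K_{g/2-1,2}(C_0,L_0)=0$: then $\cZ_{\mathrm{syz}}$ is a proper effective divisor of the same class as the irreducible divisor $\cZ_{\mathrm{diff}}$ which it contains, forcing equality.

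To construct such a pair I specialize to a $K3$ surface. Let $(X,H)$ be a general polarized $K3$ surface with $\mathrm{Pic}(X)=\ZZ H\oplus\ZZ M$, $H^2=2g-2$, $H\cdot M=2g+1$, and $M^2$ chosen (consistent with Hodge index) so that no numerical decomposition $M\equiv A+B$ on $X$ exists with $A,B$ non-zero effective classes of $H$-degrees $g/2+1$ and $g-2$. A general $C_0\in|H|$ is smooth of genus $g$ and BN--Petri general by Lazarsfeld's theorem. Setting $L_0:=M|_{C_0}$ and using $K_{C_0}\cong H|_{C_0}$ from adjunction gives $L_0-K_{C_0}\cong(M-H)|_{C_0}$; as $(X,H)$ varies in the $18$-dimensional moduli of $K3$s with this Picard lattice, a dimension count shows that for generic such $X$ the class $(M-H)|_{C_0}$ avoids the codimension-one subvariety $C_{g/2+1}-C_{g/2-2}\subset\mathrm{Pic}^3(C_0)$, so $(C_0,L_0)\notin\cZ_{\mathrm{diff}}$. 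The vanishings $H^1(X,M-jH)=0$ for $j=0,1$, which follow from Kodaira vanishing together with the chosen numerics, combined with the hyperplane-section exact sequence, then yield the Lefschetz-type isomorphism $K_{g/2-1,2}(C_0,L_0)\cong K_{g/2-1,2}(X,M)$.

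The main obstacle is the Koszul vanishing $K_{g/2-1,2}(X,M)=0$ on the $K3$ surface. My plan here follows the spirit of Voisin's proof of Green's conjecture: a hypothetical non-zero syzygy class would produce, by duality and Bogomolov stability, a Mukai--Lazarsfeld-type bundle on $X$ whose destabilizing subsheaves force an effective decomposition of $M$ of exactly the forbidden form $M\equiv A+B$, contradicting the rank-two Picard lattice we have engineered. Executing this step --- converting a Koszul non-vanishing into an explicit geometric decomposition on the $K3$ surface --- is the technical heart of the argument and the principal obstacle, because it requires adapting the $K3$-based syzygy machinery, developed for canonical curves, to the non-canonical polarization $M$ in the regime demanded by this extremal divisorial case of the Secant Conjecture.
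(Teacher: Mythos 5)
Your easy direction is fine, but the core of your plan rests on a false premise: for even genus $g=2i$ and $d=2g+1$, the locus $\cZ_{\mathrm{syz}}=\{(C,L):K_{i-1,2}(C,L)\neq 0\}$ is \emph{not} the degeneracy locus of a morphism between vector bundles of equal rank on $\mathfrak{Pic}^{2g+1}_g$. By formula (\ref{diffbetti}), with $p=i-1$, $d=4i+1$, one gets $\dim K_{i,1}(C,L)-\dim K_{i-1,2}(C,L)=\frac{i}{2i+1}\binom{2i+1}{i}>0$ (e.g.\ $4$ for $g=4$), so the two bundles computing $K_{i-1,2}$ differ in rank by that amount and the \emph{expected} codimension of $\cZ_{\mathrm{syz}}$ is $\frac{i}{2i+1}\binom{2i+1}{i}+1>1$, whereas the theorem asserts the actual locus is a divisor (a translate of $C_{i+1}-C_{i-2}$, of codimension one in $\mathrm{Pic}^{2g+1}(C)$). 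So $\cZ_{\mathrm{syz}}$ is an excess-dimensional component of a higher-codimension degeneracy locus, no virtual divisor class is available, and the argument ``equal classes $+$ containment $+$ one example $\Rightarrow$ equality'' collapses at the first step. (This is exactly why the paper reserves the divisor-class machinery for the genuinely divisorial case $g$ odd, $d=2g$, where the ranks do match.) Even granting a class computation, comparing divisor classes could at best control the divisorial part of $\cZ_{\mathrm{syz}}$; it can never exclude higher-codimension components lying outside $C_{i+1}-C_{i-2}$, which would still break the ``only if''. Finally, your $K3$ construction, once the roles of $H$ and $M$ are sorted out, essentially reproduces the paper's Theorem \ref{paros2} (the case $p=i-1$, $d=4i+1$) and yields $K_{i-1,2}(C_0,L_0)=0$ for \emph{general} $L_0$ — i.e.\ Theorem \ref{secgen} in this degree — but not the statement for \emph{every} $L$.

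The paper's actual proof of Theorem \ref{even} is a degeneration to odd genus: given $L$ of degree $2g+1$ with $K_{i-1,2}(C,L)\neq 0$, attach a rational bridge $E$ to $C$ at two general points $x,y$ to form a nodal curve $X=C\cup E$ of genus $g+1=2i+1$ and maximal gonality, and extend $L$ to $L_X$ of bidegree $(4i+1,1)$. A duality and Mayer--Vietoris argument lifts the syzygy non-vanishing to $K_{i-1,2}(X,L_X)\neq 0$, so $[X,x_1,\ldots,x_{2g+2}]\in\overline{\mathfrak{Syz}}\subset\mm_{g+1,2g+2}$. The set-theoretic identity $\overline{\mathfrak{Syz}}=\overline{\mathfrak{Sec}}\cup\overline{\mathfrak{Hur}}$, already established in the odd-genus divisorial case (Theorems \ref{osztaly} and \ref{diveset}), together with $X\notin\overline{\mathfrak{Hur}}$, forces $X\in\overline{\mathfrak{Sec}}$; unwinding the secant condition on the nodal curve via the identification $C_{g-j-1}-C_j=\Theta_{\bigwedge^jQ_C}$ of \cite{farkas-popa-mustata} and the excess-secant lemma of \cite{FHL} then yields $L-K_C\in C_{i+1}-C_{i-2}$. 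If you want to salvage your approach, you must either find a substitute for the missing divisor-class identity in even genus or adopt some such reduction to the odd-genus case.
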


Theorem \ref{even} is proved in Section 6, by regarding this case as a limit of the divisorial situation, occurring for odd genus. The strategy for proving Theorem \ref{diveset} is to interpret the G-L Secant Conjecture as an equality of divisors on the moduli space
$\cM_{g, 2g}$ of $2g$-pointed smooth curves of genus $g$ via the global Abel-Jacobi map $\cM_{g,2g}\rightarrow  \mathfrak{Pic}_g^{2g}$. For $g=2i+1$, denoting by $\pi:\cM_{g,2g}\rightarrow \cM_g$ the morphism forgetting the marked points, we consider the Hurwitz divisor $\cM_{g, i+1}^1:=\{[C]\in \cM_g: W^1_{i+1}(C)\neq \emptyset\}$ of curves with non-maximal Clifford index and its pull-back $\mathfrak{Hur}:=\pi^*(\cM_{g, i+1}^1)$.  We introduce two further divisors on $\cM_{g,2g}$. Firstly, the locus described by the condition that the embedding
$\phi_{\OO_C(x_1+\cdots+x_{2g})}:C\hookrightarrow \PP^{g}$ have extra syzygies
$$\mathfrak{Syz}:=\Bigl\{[C,x_1, \ldots, x_{2g}]\in \cM_{g,2g}: K_{i-1,2}\bigl(C, \OO_C(x_1+\cdots+x_{2g})\bigr)\neq 0\Bigr \}.$$
Theorem \ref{secgen} implies that $K_{i-1,2}(C, L)=0$ for a general line bundle $L$ of degree $2g$ on a general curve $C$, that is, $\mathfrak{Syz}$ is indeed a divisor on the moduli space.
Secondly, we consider the locus of pointed curves such that the image of $\phi_{\OO_C(x_1+\cdots+x_{2g})}$ has an $(i+1)$-secant $(i-1)$-plane, that is,
$$\mathfrak{Sec}:=\Bigl\{[C,x_1, \ldots, x_{2g}]\in \cM_{g,2g}: \OO_C(x_1+\cdots+x_{2g})\in K_C+C_{i+1}-C_{i-1}\Bigr \}.$$
Obviously $\mathfrak{Sec}$ is a divisor on $\cM_{g,2g}$ and using \cite{green-lazarsfeld-projective}, we know that the difference $\mathfrak{Syz}-\mathfrak{Sec}$ is effective. For any curve $C$ with maximal Clifford index, the cycles $K_C+C_{i+1}-C_{i-1}$ and $\bigl\{L\in \mbox{Pic}^{2g}(C):K_{i-1,2}(C,L)\neq 0\bigr\}$ have the same class in $H^*(\mbox{Pic}^{2g}(C), \mathbb Q)$. It follows that $$\mathfrak{Syz}=\mathfrak{Sec}+\pi^*(\mathfrak{D}),$$ where $\mathfrak{D}$ is an effective divisor on $\cM_g$. Via calculations in the Picard group of the moduli space $\mm_{g,2g}$, we shall establish the equality of divisors $\mathfrak{D}=\cM_{g,i+1}^1$, and thus prove the G-L Secant Conjecture in the case $d=2g$. An essential role is played by the following calculation:

\begin{thm}\label{osztaly}
The class of the divisor $\mathfrak{Syz}$ on $\cM_{g,2g}$ is given by the formula:
$$[\mathfrak{Syz}]=\frac{1}{2i}{2i\choose i-1}\Bigl(-(6i+2)\lambda+(3i+1)\sum_{j=1}^{2g} \psi_j\Bigr)\in CH^1(\cM_{g,2g}).$$
\end{thm}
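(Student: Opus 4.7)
The plan is to realize $\mathfrak{Syz}$ as the first degeneracy locus of a morphism between two vector bundles of equal rank on $\cM_{g,2g}$, and then compute the difference of their first Chern classes via Grothendieck--Riemann--Roch on the universal curve. Let $\pi:\mathcal{C}\to \cM_{g,2g}$ be the universal curve with marked sections $\sigma_1,\ldots,\sigma_{2g}$, set $\L:=\OO_\mathcal{C}(\sigma_1+\cdots+\sigma_{2g})$, and let $\E:=\pi_*\L$; since $\L$ is fiberwise non-special, $\E$ is locally free of rank $2i+2$. Denote by $\cN:=\ker(\pi^*\E\twoheadrightarrow \L)$ the relative Lazarsfeld--Mukai bundle, of rank $2i+1$, and write $\mathbb{E}=\pi_*\omega_\pi$ for the Hodge bundle. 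On each fiber, the identification $K_{i-1,2}(C,L)\cong H^1(C,\Lambda^i M_L\otimes L)$, combined with the short exact sequence
$$0\to \Lambda^{i+1}M_L\to \Lambda^{i+1}H^0(L)\otimes\OO_C\to \Lambda^i M_L\otimes L\to 0,$$
realizes $K_{i-1,2}(C,L)$ as the cokernel of the boundary map $H^1(\Lambda^{i+1}M_L)\to \Lambda^{i+1}H^0(L)\otimes H^1(\OO_C)$. Globalizing yields a morphism of coherent sheaves
$$\delta:\ \G:=R^1\pi_*\bigl(\Lambda^{i+1}\cN\bigr)\ \longrightarrow\ \H:=\Lambda^{i+1}\E\otimes \mathbb{E}^\vee$$
on $\cM_{g,2g}$ whose cokernel has set-theoretic support $\mathfrak{Syz}$.

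Next I would verify the generic-rank equality. A Riemann--Roch computation fibrewise gives $\chi(\Lambda^{i+1}M_L)=-2(2i+1)\binom{2i+1}{i+1}$, while a direct count yields $\mathrm{rk}(\H)=g\binom{2i+2}{i+1}=2(2i+1)\binom{2i+1}{i+1}$; the two coincide. Using the generic vanishing $K_{i-1,2}(C,L)=0$ provided by Theorem \ref{secgen} together with the standard fact that $h^0(\Lambda^{i+1}M_L)=0$ for general $(C,L)$, both $\G$ and $\H$ become locally free of the same rank on a dense open of $\cM_{g,2g}$, the map $\delta$ is generically an isomorphism, and the Thom--Porteous formula yields
$$[\mathfrak{Syz}]=c_1(\H)-c_1(\G)\ \in\ CH^1(\cM_{g,2g}).$$

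It remains to evaluate the two first Chern classes. Applying GRR to $\pi$, together with the intersection numbers $\pi_*(\sigma_j^2)=-\psi_j$, $\pi_*(\sigma_j\cdot c_1(\omega_\pi))=\psi_j$, $\pi_*(\sigma_j\sigma_k)=0$ for $j\neq k$, and the Mumford relation $\pi_*(c_1(\omega_\pi)^2)=12\lambda$ on the open moduli, one obtains $c_1(\E)=\lambda-\sum_{j=1}^{2g}\psi_j$, whence $c_1(\H)=g\binom{2i+1}{i}\,c_1(\E)-\binom{2i+2}{i+1}\lambda$. For $c_1(\G)=-c_1(\pi_!\Lambda^{i+1}\cN)$ I would apply GRR once more, using $\mathrm{ch}(\cN)=\pi^*\mathrm{ch}(\E)-\mathrm{ch}(\L)$ and the splitting-principle identity
$$\mathrm{ch}_2\bigl(\Lambda^{i+1}\cN\bigr)=\tfrac{1}{2}\binom{2i}{i}\,c_1(\cN)^2-\binom{2i-1}{i}\,c_2(\cN),$$
with the decisive cancellation supplied by $\binom{2i}{i}=2\binom{2i-1}{i}$. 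Subtracting the two classes and reorganizing via $\binom{2i-1}{i-1}=\tfrac{i+1}{2i}\binom{2i}{i-1}$ produces the claimed closed form.

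The two non-routine steps will be the generic-rank equality, which relies on Theorem \ref{secgen} and standard stability of the Lazarsfeld--Mukai bundle on a general curve, and the bookkeeping of the Chern-character computation for $\Lambda^{i+1}\cN$; the clean binomial shape of the final answer is not accidental but is dictated by $\binom{2i}{i}=2\binom{2i-1}{i}$, which collapses the $c_1(\E)$ contribution from $\pi_*\mathrm{ch}_2(\Lambda^{i+1}\cN)$ to a single multiple of $\binom{2i-1}{i}$.
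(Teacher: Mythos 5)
Your proposal is correct and arrives at the stated formula, but by a genuinely different route from the paper. The paper encodes $K_{i-1,2}(C,L)$ as the failure of the \emph{restriction} map $H^0(\PP^g,\bigwedge^{i-1}M_{\PP^g}(2))\to H^0(C,\bigwedge^{i-1}M_L\otimes L^{\otimes 2})$ to be an isomorphism, and therefore has to build the source and target as global bundles $\H_{i-1,2}$ and $\G_{i-1,2}$ by an inductive tower of Koszul-type exact sequences (\ref{seqg}), (\ref{seqh}); the first Chern classes then come out as alternating sums over $\ell$ of terms $c_1(\bigwedge^{i-\ell-1}\G_{0,1}\otimes\G_{0,\ell+2})$, which is where all the binomial bookkeeping lives. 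You instead use the $H^1$-description $K_{i-1,2}(C,L)\cong H^1(C,\bigwedge^i M_L\otimes L)$ and the single sequence $0\to\bigwedge^{i+1}M_L\to\bigwedge^{i+1}H^0(L)\otimes\OO_C\to\bigwedge^i M_L\otimes L\to 0$, which exhibits $\mathfrak{Syz}$ as the degeneracy locus of the relative connecting map $R^1\pi_*(\bigwedge^{i+1}\cN)\to\bigwedge^{i+1}\E\otimes\mathbb{E}^{\vee}$; this collapses the whole computation to one application of GRR to $\bigwedge^{i+1}\cN$. I have checked that your route reproduces the paper's class: with $c_1(\E)=\lambda-\sum\psi_j$ one gets $c_1(\H)=\binom{2i+1}{i}\bigl((2i-1)\lambda-(2i+1)\sum\psi_j\bigr)$ and $c_1(\G)=\binom{2i}{i}\bigl(\tfrac{i(4i+3)}{i+1}\lambda-\tfrac{8i+3}{2}\sum\psi_j\bigr)$, whose difference is exactly $\tfrac{1}{2(i+1)}\binom{2i}{i}\bigl(-(6i+2)\lambda+(3i+1)\sum\psi_j\bigr)=\tfrac{1}{2i}\binom{2i}{i-1}\bigl(-(6i+2)\lambda+(3i+1)\sum\psi_j\bigr)$. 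Your splitting-principle identity for $\mathrm{ch}_2(\bigwedge^{i+1}\cN)$ and the relation $\binom{2i}{i}=2\binom{2i-1}{i}$ are both correct. Your approach buys a substantially shorter computation; the paper's buys bundles $\H_{p,q},\G_{p,q}$ defined for all $(p,q)$, which fit the framework of \cite{Fa} and are reused elsewhere.

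One point needs to be tightened. For the Thom--Porteous identity $[\mathfrak{Syz}]=c_1(\H)-c_1(\G)$ you need $\G=R^1\pi_*(\bigwedge^{i+1}\cN)$ to be locally free on all of $\cM_{g,2g}$ (or at least off a set of codimension two), not merely on a dense open set as you assert: a priori $h^1(C,\bigwedge^{i+1}M_L)$ could jump along a divisor, and then $c_1$ of the coherent sheaf $R^1\pi_*$ would differ from $-c_1(\pi_!\bigwedge^{i+1}\cN)$ by the class of that jumping locus. The gap is easily filled: from the long exact sequence one finds
$$h^0\Bigl(C,\bigwedge^{i+1}M_L\Bigr)=\binom{2i+2}{i+1}-\chi\Bigl(\bigwedge^{i}M_L\otimes L\Bigr)+\Bigl(\dim K_{i,1}(C,L)-\dim K_{i-1,2}(C,L)\Bigr),$$
and the last bracket is the constant $0$ by (\ref{diffbetti}) in the divisorial case $d=2g$, $g=2i+1$. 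Hence $h^0(C,\bigwedge^{i+1}M_L)$ is constant over the whole family, and since it vanishes for a general $(C,L)$ by stability of $M_L$, it vanishes identically; consequently $h^1$ is constant and $\G$ is a vector bundle everywhere. With this observation inserted, your argument is complete.
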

Here $\lambda$ is the Hodge class on $\cM_{g,2g}$, whereas $\psi_1, \ldots, \psi_{2g}$ are the cotangent
classes corresponding to marked points.  The divisor class $[\mathfrak{Sec}]$ has been computed in \cite{Fa} Theorem 4.2:
$$[\mathfrak{Sec}]=\frac{1}{2i}{2i\choose i-1}\Bigl(-\frac{18i^2+10i-2}{2i-1}\lambda+(3i+1)\sum_{j=1}^{2g} \psi_j\Bigr)\in CH^1(\cM_{g,2g}).$$ Comparing these expressions  to the class of the Hurwitz divisor, famously computed in \cite{HM},
$$[\overline{\mathfrak{Hur}}]=\frac{1}{2i}{2i\choose i-1}\Bigl(\frac{6(i+2)}{2i-1}\lambda-\frac{i+1}{2i-1}\delta_{\mathrm{irr}}-\cdots\bigr)\in CH^1(\mm_g),$$ we obtain the following equality of divisors at the level of $\cM_{g,2g}$:
\begin{equation}\label{picid}
[\mathfrak{Syz}]=[\mathfrak{Sec}]+i\cdot [\mathfrak{Hur}] \in CH^1(\cM_{g,2g}).
\end{equation}
For a curve $C$ with $\mbox{Cliff}(C)<i$, it is easy to show that $\mbox{dim } K_{i-1,2}(C,L)\geq i$ (see Proposition \ref{scroll}), which implies that the divisor $\pi^*(\mathfrak{D})-i\cdot \mathfrak{Hur}$ is still effective on $\cM_{g,2g}$. On the other hand, $\pi^*([\mathfrak{D}-i\cdot \cM_{g,i+1}^1])=0$; since the map $\pi^*:\mbox{Pic}(\cM_g)\rightarrow \mbox{Pic}(\cM_{g,2g})$ is known to be injective, and on $\cM_g$ there can exist no non-trivial effective divisor whose class is zero (see for instance \cite[Lemma 1.1]{Fa2}), we conclude that $\mathfrak{D}=i\cdot \cM_{g,i+1}^1$.

\vskip 5pt

When (\ref{ineq1}) becomes an equality and $C$ is general, we give an answer which is more precise than the one provided in Theorem \ref{secgen}, as to which line bundles have unexpected syzygies.

\begin{thm}\label{hatareset}
Let $C$ be a  curve of genus $g$ which is Brill--Noether--Petri general. For $p\geq 0$, we set $$d=2g+p+1-\mathrm{Cliff}(C).$$ If
$L\in \mathrm{Pic}^d(C)$ is a non-special line bundle such that the secant variety $V_{g-p-3}^{g-p-4}(2K_C-L)$ has the expected dimension $d-g-2p-4$, then
$K_{p,2}(C,L)=0$.
\end{thm}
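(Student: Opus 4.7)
My plan is to recast the vanishing $K_{p,2}(C,L)=0$ as a cohomological statement on $C$ via Green's Koszul duality, and then to contradict it by transporting the hypothesis on the dual secant variety through the Ein--Lazarsfeld secant sheaf formalism on the symmetric product.

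First, since $L$ is non-special one has $H^0(K_C\otimes L^{-1})=0$, and Green's duality for Koszul cohomology on a curve yields
$$K_{p,2}(C,L)\ \cong\ H^0\bigl(C,\Lambda^a M_L\otimes K_C\bigr)^{\vee},\qquad a:=d-g-p-1,$$
where $M_L=\ker\bigl(H^0(L)\otimes\OO_C\to L\bigr)$ is the kernel bundle. Thus it is enough to prove $H^0(C,\Lambda^a M_L\otimes K_C)=0$.

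Second, a direct Riemann--Roch/Serre-duality computation shows that the residuation $E\mapsto F:=K_C-L+E$ defines an isomorphism $V_a^{a-1}(L)\xrightarrow{\ \sim\ }V_{g-p-3}^{g-p-4}(2K_C-L)$, and both varieties carry the same expected dimension $d-g-2p-4$. Therefore the hypothesis of the theorem is equivalent to asserting that $V_a^{a-1}(L)\subset C_a$ has this expected dimension.

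Third, I realize $V_a^{a-1}(L)$ on the symmetric product $C_a$ as the degeneracy locus of the evaluation morphism $\mathrm{ev}_a:H^0(L)\otimes\OO_{C_a}\to L^{[a]}$, where $L^{[a]}:=p_{2*}p_1^*L$ is the Ein--Lazarsfeld tautological rank-$a$ bundle associated to the universal degree-$a$ divisor $\cD\subset C\times C_a$. The expected codimension $p+3$ of this degeneracy locus matches Step~2. Assuming for contradiction a non-zero section $\sigma\in H^0(C,\Lambda^a M_L\otimes K_C)$, an Eagon--Northcott/Cayley--Bacharach argument on $C_a$ converts $\sigma$ into an irreducible component of $V_a^{a-1}(L)$ of dimension at least $d-g-2p-3$. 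The Brill--Noether--Petri hypothesis ensures that the relevant Petri multiplication maps $H^0(L')\otimes H^0(M')\to H^0(L'\otimes M')$ have maximal rank, so that the excess parameters are geometrically realized and not absorbed by defects of the Petri maps. This strictly exceeds the expected dimension, contradicting Step~2 and finishing the proof.

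The main obstacle is Step~3: converting a Koszul class $\sigma$ into a positive-dimensional excess of $V_a^{a-1}(L)$ with a sharp dimension count. The yoga combines the Ein--Lazarsfeld secant-sheaf formalism on $C_a$, a Cayley--Bacharach reduction converting a non-trivial class $\sigma$ into a family of effective divisors with excess dependency, and the Petri-injectivity afforded by Brill--Noether--Petri generality, which is what ultimately allows the dimension of the produced family to strictly exceed $d-g-2p-4$.
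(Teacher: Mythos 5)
Your Steps 1 and 2 are correct but routine: the duality $K_{p,2}(C,L)^{\vee}\cong H^0\bigl(C,\bigwedge^{a}M_L\otimes K_C\bigr)$ with $a=d-g-p-1$, and the residuation identifying $V_a^{a-1}(L)$ with $V_{g-p-3}^{g-p-4}(2K_C-L)$ (both are just restatements via Riemann--Roch of the membership $L-K_C\in C_a-C_{g-p-3}$). The problem is Step 3, which is not an argument but a restatement of the theorem. The assertion that a non-zero section $\sigma\in H^0(C,\bigwedge^a M_L\otimes K_C)$ can be ``converted'' by an Eagon--Northcott/Cayley--Bacharach procedure into a component of $V_a^{a-1}(L)$ of dimension $\geq d-g-2p-3$ is exactly the hard implication of the Green--Lazarsfeld Secant Conjecture (non-trivial Koszul class $\Rightarrow$ unexpected secant behaviour). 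The only formal implication available from the Eagon--Northcott yoga runs in the opposite direction: a $(p+2)$-secant $p$-plane produces a non-zero class in $K_{p,2}(C,L)$. No known determinantal or Cayley--Bacharach mechanism produces secant divisors from Koszul classes, and Petri generality of $C$ (which controls the schemes $W^r_d(C)$, not the syzygies of non-special line bundles) does not supply one. As written, your proof assumes its conclusion at the decisive moment.

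For comparison, the paper gets the required excess of $V_{g-p-3}^{g-p-4}(2K_C-L)$ by an entirely different route. Since $d=2g+p+1-\mathrm{Cliff}(C)$ with $\mathrm{Cliff}(C)$ maximal, one can add $i-p-1$ general points (in genus $g=2i+1$) and use Proposition \ref{reduction-by-one} in contrapositive form: $K_{p,2}(C,L)\neq 0$ forces $K_{i-1,2}(C,L(D))\neq 0$ for \emph{every} $D\in C_{i-p-1}$, where $\deg L(D)=2g$. The divisorial case $d=2g$ (Theorem \ref{diveset}) then says $L+D-K_C\in C_{i+1}-C_{i-1}$ for all such $D$, i.e. $L-K_C+C_{i-p-1}\subset C_{i+1}-C_{i-1}$, which gives $\dim V_{g-p-3}^{g-p-4}(2K_C-L)\geq i-p-1>d-g-2p-4$, a contradiction. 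The even-genus case runs the same way through Theorem \ref{even}. The entire weight of the proof is thus carried by Theorem \ref{diveset}, which is established globally over moduli: one computes the class of the syzygy divisor $\mathfrak{Syz}$ on $\cM_{g,2g}$ (Theorem \ref{osztaly}), compares it with the classes of $\mathfrak{Sec}$ and of the Harris--Mumford divisor, and uses the $K3$ constructions to know $\mathfrak{Syz}$ is a genuine divisor, plus Proposition \ref{scroll} for the multiplicity along $\mathfrak{Hur}$. Your proposal contains no substitute for any of this, so Step 3 is a genuine gap rather than a sketch of a workable alternative.
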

The condition that $\mbox{dim } V_{g-p-3}^{g-p-4}(2K_C-L)$ be larger than expected can be translated into a condition on translates of divisorial difference varieties, and Theorem \ref{hatareset} can be restated:
\begin{equation}\label{transl}
\mbox{If } \ K_{p,2}(C,L)\neq 0, \ \mbox{ then } \  L-K_C+C_{d-g-2p-3}\subset C_{d-g-p-1}-C_{2g-d+p}.
\end{equation}

\vskip 2pt

Clearly, if $L\in \mbox{Pic}^d(C)$ fails to be $(p+1)$-very ample, that is, $L-K_C\in C_{p+2}-C_{2g-d+p}$, then $L$ also satisfies condition (\ref{transl}). We show in Section 2, that for a general curve $C$, the locus $$V(C):=\Bigl\{L\in \mbox{Pic}^{d}(C): \mbox{dim } V_{g-p-3}^{g-p-4}(2K_C-L)>d-g-2p-4\Bigr\}$$ is a subvariety of $\mbox{Pic}^d(C)$ of dimension $2g+2p-d+2$.

\vskip 3pt

We describe our proof of Theorem \ref{secgen}, starting with the case of odd genus $g=2i+1$. Comparing the inequalities (\ref{ineq1}) and (\ref{ineq2}), we distinguish two cases. If $p\geq i-1$, Theorem \ref{secgen} can be easily reduced to the case $d=2p+2i+4$. Observe that in this case, the secant locus $K_C+C_{p+2}-C_{2g-d+p}$ is a divisor inside
$\mbox{Pic}^d(C)$. We use $K3$ surfaces with Picard number two:

\begin{thm}\label{k3i}
We fix positive integers $p$ and $i \geq 1$ with $p\geq i-1$, $p \geq 1$ and a general $K3$ surface $X$ with $\mathrm{Pic}(X)=\mathbb Z\cdot C\oplus \mathbb Z\cdot H$, where
$$C^2=4i+2, \ H^2=4p+4, \ C\cdot H=2p+2i+4.$$
Then $K_{j,2}(C, H_{C})=0$ for $j \leq p$. In particular, the G-L Secant Conjecture holds for general line bundles of degree $2p+2i+4$ on $C$.
\end{thm}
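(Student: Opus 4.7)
My plan is to specialize to a general $K3$ surface $X$ carrying the prescribed rank-two Picard lattice, and to reduce the desired vanishing $K_{j,2}(C, H_C) = 0$ to Voisin's proof of the generic Green Conjecture \cite{voisin-odd} applied not to $C$ but to a smooth curve $Y \in |H|$. Such $Y$ has genus $g(Y) = H^2/2 + 1 = 2p+3$ by adjunction, and since $H$ is primitive in the Picard lattice and $X$ is general, Lazarsfeld's theorem on Clifford indices of curves on $K3$ surfaces gives $\mathrm{Cliff}(Y) = p+1$, the maximal value. Voisin's theorem for odd-genus curves, combined with Koszul duality on the canonical curve $Y$, then yields $K_{j,2}(Y, K_Y) = 0$ for all $j \leq p$.

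This vanishing is propagated first from $Y$ to $X$, and then from $X$ to $C$. For the first step, Kodaira vanishing on the $K3$ gives $H^i(X, qH) = 0$ for $i > 0$ and $q \geq 1$, and combined with the adjunction $K_Y = H|_Y$, Green's hyperplane restriction theorem delivers the isomorphism $K_{j,2}(X, H) \cong K_{j,2}(Y, K_Y) = 0$ for $j \leq p$. For the second step, I would tensor the sequence $0 \to \sheafO_X(-C) \to \sheafO_X \to \sheafO_C \to 0$ with powers of $H$; using $H^1(X, qH - C) = 0$ for $q \geq 1$ (which holds by Kawamata-Viehweg for $q \geq 2$, when $qH - C$ is big and nef, and by direct inspection at $q = 1$ since $H - C$ is a $(-2)$-class), one obtains the long exact sequence in Koszul cohomology
$$\cdots \longrightarrow K_{j,2}(X, H) \longrightarrow K_{j,2}(C, H_C) \longrightarrow K_{j-1, 3}(X, -C; H) \longrightarrow \cdots,$$
so the theorem reduces to the auxiliary vanishing $K_{j-1, 3}(X, -C; H) = 0$ for $j \leq p$.

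The main obstacle is this auxiliary vanishing. I would address it via Serre duality on the $K3$ surface (using $K_X = \sheafO_X$), which identifies $K_{j-1, 3}(X, -C; H)$ with the dual of a Koszul group with coefficients $\sheafO_X(C)$ in weight zero. The effective cone of $X$ is explicit, with only $(-2)$-classes $\pm(H-C)$, so the relevant section spaces $H^0(X, C + kH)$ for small $k$ can be computed directly and the injectivity of the pertinent Koszul differential verified. The boundary regime $p = i - 1$ (where $C - H$ becomes effective) and $p = i$ (where $H$ is only nef) may require minor case-by-case adjustments, for instance replacing $H$ temporarily by the ample class $C + H$.
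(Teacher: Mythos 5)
Your reduction follows the same skeleton as the paper's: restrict to a smooth $D\in|H|$ of genus $2p+3$, obtain $K_{j,2}(X,H)=0$ from Green's Conjecture for $D$ via the Lefschetz theorem, and use Green's long exact sequence to reduce everything to the auxiliary vanishing $K_{j-1,3}(X,-C;H)=0$ for $j\leq p$. Two caveats on the first half. First, $\mathrm{Cliff}(D)=p+1$ is not automatic from primitivity of $H$: Lazarsfeld's theorem \cite{lazarsfeld-bnp} requires that $H$ admit no decomposition $H=A_1+A_2$ with $h^0(X,A_i)\geq 2$, and on a Picard-rank-two surface this must actually be checked (the paper does so by deforming to a rank-three lattice containing an elliptic class $E$ and intersecting with $E$). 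Second, $D$ lies on a $K3$ surface and is therefore not a priori a generic curve of its genus, so you should invoke Green's Conjecture for curves on arbitrary $K3$ surfaces \cite{aprodu-farkas-green} (or Voisin plus Hirschowitz--Ramanan for odd-genus curves of maximal Clifford index) rather than \cite{voisin-odd} alone. Both of these are repairable.

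The genuine gap is the auxiliary vanishing, which is the heart of the theorem and for which your proposal contains no workable argument. Serre duality converts $K_{j-1,3}(X,-C;H)$ into the dual of a weight-zero group $K_{2p+2-j,0}(X,\mathcal{O}_X(C);H)$, i.e.\ into the injectivity of a Koszul differential $\bigwedge^{2p+2-j}H^0(X,H)\otimes H^0(X,C)\to \bigwedge^{2p+1-j}H^0(X,H)\otimes H^0(X,C+H)$. Knowing the dimensions of the spaces $H^0(X,C+kH)$ says nothing about whether this map is injective; that injectivity \emph{is} the problem, exactly as the Minimal Resolution Conjecture of \cite{farkas-popa-mustata} is not settled by a dimension count. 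The paper instead pushes the group down to the curve, $K_{j-1,3}(X,-C,H)\cong K_{j-1,3}(D,-C_D,K_D)$, whose vanishing amounts to $H^1\bigl(D,\bigwedge^j M_{K_D}\otimes K_D\otimes \eta_D\bigr)=0$ with $\eta=H-C$, and then degenerates to a $K3$ surface with a rank-three Picard lattice containing an elliptic pencil $E$ with $E\cdot H=2$. On that surface $D$ is hyperelliptic, $\bigwedge^j M_{K_D}$ splits into copies of $\mathcal{O}_D(-jE_D)$, and the vanishing becomes a line-bundle cohomology computation controlled by a lattice analysis of effective classes; semicontinuity then carries the conclusion back to the general surface. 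Without such a degeneration (or some other genuine mechanism for the injectivity) your final step does not go through. Note also that the paper normalizes its lattice so that $(H-C)^2=-4$ and there are no $(-2)$-classes at all --- a fact used repeatedly in its effectivity arguments --- so your description of the effective cone as generated by $(-2)$-classes $\pm(H-C)$ does not match the situation actually treated in Section 3.
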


The proof of Theorem \ref{k3i} relies on Green's \cite{green-koszul}  exact sequence
$$\cdots \rightarrow K_{p,2}(X,H)\rightarrow K_{p,2}(C,H_C)\rightarrow K_{p-1,3}\bigl(X,\OO_X(-C), H\bigr)\rightarrow \cdots,$$
where the Koszul cohomology group whose vanishing has to be established is the one in the middle.
We look at the cohomology groups on the extremes. For a smooth curve $D\in |H|$, via the Lefschetz hyperplane principle \cite{green-koszul}, we write the isomorphism $K_{p,2}(X,H)\cong
 K_{p,2}(D,K_D)$.
 Recall that Green's Conjecture \cite{green-koszul} predicts the equivalence
$$K_{p,2}(D,K_D)=0\Leftrightarrow p<\mbox{Cliff}(C).$$ Green's Conjecture  holds for curves on arbitrary $K3$ surfaces \cite{voisin-even} \cite{voisin-odd}, \cite{aprodu-farkas-green}. We show by a Brill-Noether argument in the style of \cite{lazarsfeld-bnp} that $\mbox{Cliff}(D)=p+1$, hence $K_{p,2}(D, K_D)=0$. Furthermore, we establish the isomorphism $$K_{p-1,3}\bigl(X,\OO_X(-C),H)\cong K_{p-1,3}(D, \OO_D(-C), K_D)$$ (see Proposition \ref{prop1}). This latter group is zero if and only if
\begin{equation}\label{diff2}
H^1\Bigl(D, \bigwedge^p M_{K_D}\otimes K_D^{\otimes 2}(-C_D)\Bigr)=0,
\end{equation}
where $M_{K_D}$ is the kernel bundle of the evaluation map $H^0(K_D)\otimes \OO_D\rightarrow K_D$. We produce a particular $K3$ surface with Picard number \emph{three}, on which both curves $C$ and $D$ specialize to hyperelliptic curves such that the condition (\ref{diff2}) is satisfied.

The other possibility, namely when $p\leq i-1$, can be reduced to the case when $p=i-1$, that is, $d=2g$. As already pointed out, this is the only divisorial case of  the G-L Secant Conjecture.

When the genus $g$ is even, we show in Section 4 the following result:
\begin{thm}\label{paros2}
Let $g=2i \geq 4$ be even. Let $C$ be a general curve of genus $2i$ and let $L$ be a general line bundle on $C$  of degree $2p+2i+3$, where $p+1\geq i$. Then $$K_{j,2}(C,L)=0 \text{ for $j \leq p$.}$$
\end{thm}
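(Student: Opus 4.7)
The plan is to adapt the specialization-to-$K3$ strategy of Theorem~\ref{k3i} to even genus. By the usual reduction obtained from increasing the degree by one general base point, it suffices to prove the vanishing in the extremal case $j=p$, so we fix $[C,L]$ general with $g(C)=2i$ and $\deg(L)=2p+2i+3$, and seek $K_{p,2}(C,L)=0$.

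We specialize $[C,L]$ to $[C,H_{C}]$, where $C$ is a smooth curve on a general $K3$ surface $X$ of Picard rank two, with
$$\mathrm{Pic}(X)=\mathbb Z\cdot C \oplus \mathbb Z\cdot H,\quad C^{2}=4i-2,\ H^{2}=4p+4,\ C\cdot H=2p+2i+3.$$
Then $g(C)=2i$, $\deg(H_{C})=2p+2i+3$, and any smooth $D\in |H|$ has genus $2p+3$, hence Brill--Noether-generic Clifford index $p+1$. The Hodge index theorem gives a negative discriminant in the range $p+1\geq i$, and standard lattice-polarized $K3$ theory makes both $C$ and $H$ ample, with $C$ deforming to a general curve of genus $g$ in $\mathcal M_{g}$.

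With this specialization, Green's hyperplane restriction sequence,
$$K_{p,2}(X,H)\longrightarrow K_{p,2}(C,H_{C})\longrightarrow K_{p-1,3}\bigl(X,\mathcal O_{X}(-C),H\bigr),$$
reduces the problem to the vanishing of the two outer groups. By Lefschetz, $K_{p,2}(X,H)\cong K_{p,2}(D,K_{D})$, and a Lazarsfeld-style Brill--Noether argument (in the spirit of the one used for Theorem~\ref{k3i}) shows $\mathrm{Cliff}(D)=p+1$; Voisin's theorem on Green's Conjecture for $K3$ curves then gives $K_{p,2}(D,K_{D})=0$. By Proposition~\ref{prop1}, one also has
$$K_{p-1,3}\bigl(X,\mathcal O_{X}(-C),H\bigr)\cong K_{p-1,3}\bigl(D,\mathcal O_{D}(-C),K_{D}\bigr),$$
whose vanishing is equivalent to
$$H^{1}\Bigl(D,\bigwedge^{p}M_{K_{D}}\otimes K_{D}^{\otimes 2}(-C_{D})\Bigr)=0.$$
Following the odd-genus template, we prove this last vanishing by further degenerating $X$ to a $K3$ of Picard rank three along which both $C$ and $D$ acquire hyperelliptic involutions, then verifying the $H^{1}$ vanishing by an explicit calculation on the hyperelliptic locus.

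The main obstacle is this final step. The lattice change from $C^{2}=4i+2$ (odd case) to $C^{2}=4i-2$ (even case) means the rank three specialization must be constructed anew, the hyperelliptic degeneration of the genus $2i$ curve $C$ must be arranged inside this new lattice, and the cohomological vanishing re-checked. The remaining ingredients---the Clifford index bound for $D$, the Lefschetz identification, and Green's restriction sequence---transfer essentially verbatim from the proof of Theorem~\ref{k3i}.
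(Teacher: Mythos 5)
Your outline reproduces the paper's own strategy for this theorem (Section 4): the rank-two lattice you write down is exactly the paper's $\Xi_{g,p}$ in the basis $\{C,H\}$ (with $\eta=H-C$, so $H^2=4p+4$, $\eta^2=-4$, $H\cdot\eta=2p-2i+1$), the reduction via Green's restriction sequence and Proposition \ref{prop1} to the two vanishings $K_{p,2}(X,H)=0$ and $K_{p-1,3}(X,-C,H)=0$ is the same, the Clifford-index/Voisin argument for the first vanishing is the same, and the further degeneration to a rank-three lattice on which $D$ becomes hyperelliptic is exactly the paper's $\hat{\Xi}_{g,p}$, obtained by adjoining an elliptic class $E$ with $E^2=0$, $E\cdot H=2$, $E\cdot\eta=0$.

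The gap is that the step you flag as "the main obstacle" is not a routine re-check but is the bulk of the actual proof, and you do not carry it out. Concretely, what is missing is: (a) the verification on $\hat{\Xi}_{g,p}$ that $E$, $H$ and $L=H-\eta$ are nef and base point free, that no class $aE+b\eta$ with $b\neq 0$ is effective, and that $H^1(\hat Z_g,qH-L)=0$ for all $q\geq 0$ together with $H^0(\hat Z_g,H-L)=0$ — these are the hypotheses without which Proposition \ref{prop1} and the Lefschetz identification cannot be invoked; (b) the non-effectivity of $H-(2p+2-j)E-\eta$ for $j\leq p$, which is what actually yields $H^1\bigl(D,\bigwedge^{j}M_{K_D}\otimes K_D\otimes\eta_D\bigr)=0$ via the hyperelliptic splitting of $M_{K_D}$; and (c) the decomposition argument $H\neq A_1+A_2$ on the generic $\Xi_{g,p}$-polarized surface giving $\mathrm{Cliff}(D)=p+1$. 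None of this transfers "essentially verbatim" from the odd-genus case: there the key simplification was that every self-intersection in $\hat{\Theta}_{g,p}$ is divisible by $4$, so the surfaces carry no $(-2)$-classes and every effective class is automatically nef and base point free (Corollary \ref{lattice-cor}). Here the off-diagonal entry $2p-2i+1$ is odd, $(-2)$-classes can exist, and each nefness, base-point-freeness and non-effectivity statement requires a separate case analysis (this is what the six lemmas of Section 4 do). Also note a small imprecision: "standard lattice-polarized $K3$ theory makes both $C$ and $H$ ample" is not something you get for free; the paper only establishes big, nef and base point free, and must work for it. So the architecture is correct, but as written the proposal asserts rather than proves the theorem's technical core.
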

This is achieved via proving Theorem \ref{evenkoszul}. We specialize $C$ to a curve lying on $K3$ surface $X$ such that $\mbox{Pic}(X)=\mathbb Z\cdot C \oplus \mathbb Z\cdot H$, where  this time $C^2=4i-2$, $H^2=4i+4$ and $C\cdot H=2p+2i+3$. As in the odd genus case, the vanishing in question is established for the line bundle $L=H_{C}$. The lattice theory required to show that $K_{j,2}(X,H)=0$ for $j\leq p$ in the even genus case is more involved that for odd genus, but apart from this, the two cases proceed along similar lines.

\vskip 7pt
We close the Introduction by discussing the connection between the Prym-Green and the Secant Conjecture respectively. First, observe that in odd genus $g=2i+5$, Prym-canonical line bundles fall within the range in which inequality (\ref{ineq1}) holds. In particular, the vanishing $K_{i-1,2}(C,K_C\otimes \eta)=0$ is predicted by Theorem \ref{hatareset}. Overall however, the Prym-Green Conjecture lies \emph{beyond} the range covered by the Secant Conjecture. For instance, we have seen that $K_{i,2}(C,K_C\otimes \eta)\neq 0$, despite the fact that a general Prym-canonical line bundle $K_C\otimes \eta$ is $(i+1)$-very ample (equivalently, $\eta\notin C_{i+2}-C_{i+2}$, see \cite[Theorem C]{chiodo-eisenbud-farkas-schreyer}).

\vskip 3pt

\noindent {\bf Structure of the paper:} Section 2 contains generalities on syzygies on curves and $K3$ surfaces, as well as considerations on difference varieties that enable us to reduce the number of cases in the Secant Conjecture. Sections 3 and 4 are lattice-theoretic in nature and present the proofs via $K3$ surfaces of the G-L Secant Conjecture in its various degree of precision (Theorems \ref{secgen} and \ref{k3i} respectively). Section 5 is concerned with syzygies of Nikulin surfaces and the proof of the Prym-Green Conjecture. Finally, in Section 6, we carry out calculations on the space $\cM_{g,2g}$ needed to complete the proof of the divisorial case of the Secant Conjecture (Theorem \ref{diveset}).

\vskip 3pt

\noindent {\bf Acknowledgment:} We are grateful to M. Aprodu, D. Eisenbud, J. Harris, R. Lazarsfeld, F.-O. Schreyer, and especially to C. Voisin for many useful discussions related to this circle of ideas.

\section{Generalities on Syzygies of Curves}
In this section we gather some general results on syzygies of curves which will be of use. We fix a globally generated line bundle $L$ and a sheaf $\F$ on a projective variety $X$.
We then form the graded $S:=\mbox{Sym } H^0(X,L)$-module
$$\Gamma_X(\F,L):=\bigoplus_{q\in \mathbb Z} H^0(X, \F\otimes L^{\otimes q}).$$
Following \cite{green-koszul}, we denote by $K_{p,q}(X,\F,L)$ the space of $p$-th syzygies of weight $q$ of the module $\Gamma_X(\F,L)$. Often $\F=\OO_X$, in which case we denote
$K_{p,q}(X,L):=K_{p,q}(X,\OO_X,L)$. Geometrically, one studies Koszul cohomology groups via kernel bundles. Consider the vector bundle
$$M_{L} := \text{Ker}\bigl\{H^0(X,L) \otimes \mathcal{O}_X \twoheadrightarrow L\bigr\},$$ where the above map is evaluation. We quote the following description from
\cite{lazarsfeld-kernel}:

\begin{align*}
K_{p,q}(X, \mathcal{F}, L) & \simeq \text{coker} \Bigl\{\bigwedge^{p+1}H^0(X,L) \otimes H^0(X,\mathcal{F} \otimes L^{q-1}) \to H^0(X,\bigwedge^p M_L \otimes \mathcal{F} \otimes L^q)\Bigr\} \\
& \simeq \text{ker}\Bigl\{H^1(X,\bigwedge^{p+1} M_L \otimes \mathcal{F} \otimes L^{q-1}) \to \bigwedge^{p+1}H^0(X, L) \otimes H^1(X,\mathcal{F} \otimes L^{q-1})\Bigr\}
\end{align*}

In particular, for a non-special line bundle $L$, we have the equivalence, cf. \cite[Lemma 1.10]{GL3}:
$$K_{p,2}(X,L)=0\Leftrightarrow H^1\Bigl(X, \bigwedge^{p+1}M_L\otimes L\Bigr)=0.$$

Using the above description of Koszul cohomology, it follows that the difference of  Betti numbers on any diagonal of the Betti diagram of a non-special line bundle $L$ on a curve $C$ is an Euler characteristic of a vector bundle on $C$, hence constant. Precisely,
\begin{equation}\label{diffbetti}
\mbox{dim } K_{p+1,1}(C,L)-\mbox{dim } K_{p,2}(C,L)=(p+1)\cdot {d-g\choose p+1}\Bigl(\frac{d+1-g}{p+2}-\frac{d}{d-g}\Bigr).
\end{equation}

The following fact is well-known and essentially trivial:
\begin{prop} \label{reduction-by-one}
Let $C$ be a smooth curve and $L$ a globally generated line bundle on $C$ with $h^1(C,L)=0$ and $K_{p,2}(C,L)=0$. If $x \in C$ is a point such that $L(-x)$ is globally generated, then $K_{p-1,2}(C,L(-x))=0$.
\end{prop}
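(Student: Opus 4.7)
The plan is to combine the cohomological criterion recorded just above the proposition --- namely, that for a non-special globally generated line bundle $L$ on $C$ one has $K_{p,2}(C,L)=0$ if and only if $H^1(C,\bigwedge^{p+1}M_L\otimes L)=0$ --- with the standard short exact sequence comparing the kernel bundles of $L$ and $L(-x)$.

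First I would verify that $L':=L(-x)$ is itself non-special, so that the criterion applies to $L'$ as well. From the sequence $0\to L'\to L\to L|_x\to 0$, the global generation of $L$ makes the evaluation $H^0(L)\to L|_x$ surjective; together with the hypothesis $h^1(L)=0$, this forces $h^1(L')=0$.

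The main step is to produce the exact sequence
$$0\longrightarrow M_{L'}\longrightarrow M_L\longrightarrow \OO_C(-x)\longrightarrow 0.$$
This follows from the snake lemma applied to the morphism between the defining sequences of $M_L$ and $M_{L'}$, using that the inclusion $H^0(L')\hookrightarrow H^0(L)$ has one-dimensional cokernel (because $x$ is not a base point of $L$) and that $L/L'$ is the skyscraper sheaf at $x$. Taking the $(p+1)$-st exterior power and twisting with $L$ then yields
$$0\longrightarrow \bigwedge^{p+1}M_{L'}\otimes L\longrightarrow \bigwedge^{p+1}M_L\otimes L\longrightarrow \bigwedge^{p}M_{L'}\otimes L'\longrightarrow 0,$$
where I have used the identity $L(-x)=L'$ to rewrite the right-hand term. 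Since $C$ is a curve, $H^2$ vanishes identically, so the associated long exact sequence in cohomology produces a surjection
$$H^1\Bigl(C,\bigwedge^{p+1}M_L\otimes L\Bigr)\twoheadrightarrow H^1\Bigl(C,\bigwedge^{p}M_{L'}\otimes L'\Bigr).$$
The source vanishes by the hypothesis $K_{p,2}(C,L)=0$, so the target vanishes; in view of the first step this reads $K_{p-1,2}(C,L')=0$.

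There is no real obstacle here: the whole argument is formal once one has the exact sequence comparing $M_L$ and $M_{L'}$, a well-known device, and the snake-lemma identification of its cokernel as $\OO_C(-x)$ is the only point requiring even a moment of care.
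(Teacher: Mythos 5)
Your proof is correct and follows exactly the paper's route: the paper's own one-line proof invokes the same exterior-power sequence comparing $M_L$ and $M_{L(-x)}$ (displayed there with each index lowered by one, so your $\bigwedge^{p+1}$-version is the correctly normalized form for the stated conclusion) and then uses the kernel-bundle criterion $K_{p,2}=0\Leftrightarrow H^1(\bigwedge^{p+1}M_L\otimes L)=0$. Your additional checks --- the snake-lemma identification of the cokernel as $\OO_C(-x)$ and the verification that $L(-x)$ is again non-special --- are exactly the details the paper leaves implicit.
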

\begin{proof}
Follows via the above description of Koszul cohomology, by using the exact sequence
$$0 \to \bigwedge^p M_{L(-x)} \otimes L \to \bigwedge^p M_L \otimes L \to \bigwedge^{p-1}M_{L(-x)} \otimes L(-x) \to 0. $$
\end{proof}

\subsection{Syzygies of $K3$ surfaces.} The following result, while simple, is essential in the proof of Theorem \ref{secgen}, for it allows us to ultimately reduce the vanishing required in the G-L Secant Conjecture to a vanishing of the type appearing in a slightly different context in the statement of the Minimal Resolution Conjecture of \cite{farkas-popa-mustata}.
\begin{lem}
Let $X$ be a K3 surface, and let $L$ and $H$ be line bundles with $H$ effective and base point free. Assume $(H \cdot L)>0$ and $H^1(X,qH-L)=0$ for $q \geq 0$. Then for each smooth curve $D \in |H|$, we have that $K_{p,q}(X,-L,H) \simeq K_{p,q}(D,-L_D,K_D)$ for all $p$ and $q$.
\end{lem}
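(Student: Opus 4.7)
The plan is to derive the isomorphism from a short exact sequence of graded modules and then exploit the null-homotopy of multiplication-by-$s$ on the Koszul complex. By adjunction on the K3 surface one has $K_D = H|_D$. Set $M := \Gamma_X(\OO_X(-L), H)$, viewed as a graded module over $S := \mathrm{Sym}\, H^0(X, H)$, and $N := \Gamma_D(\OO_D(-L_D), K_D)$, viewed as a graded module over $S' := \mathrm{Sym}\, H^0(D, K_D)$. The restriction map $H^0(X, H) \twoheadrightarrow H^0(D, K_D)$ is surjective with one-dimensional kernel spanned by a defining section $s \in H^0(X,H)$ of $D$, using $H^0(\OO_X) = \C$ and $H^1(\OO_X) = 0$ on the K3; hence $S' = S/(s)$.

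First, I would assemble a short exact sequence of graded $S$-modules. Tensoring $0 \to \OO_X(-H) \to \OO_X \to \OO_D \to 0$ with $\OO_X(qH - L)$ and taking global sections, the hypothesis $H^1(X, qH - L) = 0$ for $q \geq 0$ makes the restriction $H^0(X, qH-L) \to H^0(D, qK_D - L_D)$ surjective for $q \geq 1$, with kernel $s \cdot H^0(X, (q-1)H-L)$. The positivity $(H\cdot L) > 0$ handles the low-degree edge (for effective $L$, which is the relevant case in the applications, both $H^0(X,-L)$ and $H^0(D, -L_D)$ vanish). Bundling across $q$ yields
$$0 \to M(-1) \stackrel{\cdot s}{\longrightarrow} M \longrightarrow N \to 0.$$

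Second, I would pass to the long exact sequence in Koszul cohomology. Multiplication by $s$ is null-homotopic on the Koszul complex via $\alpha \otimes m \mapsto s \wedge \alpha \otimes m$, hence induces the zero map on Koszul cohomology, so the long exact sequence collapses into
$$0 \to K_{p,q}(M, S) \to K_{p,q}(N, S) \to K_{p-1, q}(M, S) \to 0.$$
Since $N$ is annihilated by $s$, the decomposition $H^0(X,H) = H^0(D, K_D) \oplus \C \cdot s$ induces a canonical splitting $K_{p,q}(N, S) \simeq K_{p,q}(N, S') \oplus K_{p-1, q}(N, S')$, whose first summand is exactly $K_{p,q}(D, -L_D, K_D)$. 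A straightforward induction on $p$, with trivial base case $p = -1$, then yields the desired isomorphism.

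The main obstacle is verifying the compatibility of the splitting of $K_{p,q}(N,S)$ with the connecting map of the Koszul long exact sequence: under the inductive identification $K_{p-1,q}(M,S) \simeq K_{p-1,q}(N, S')$, one must check that the surjection $K_{p,q}(N, S) \twoheadrightarrow K_{p-1, q}(M, S)$ really is the projection onto the second factor of the splitting, so that its kernel is $K_{p,q}(N, S') = K_{p,q}(D, -L_D, K_D)$. This compatibility is natural but requires tracing carefully through the Koszul differentials and the definition of the connecting homomorphism.
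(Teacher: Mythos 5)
Your proposal follows essentially the same route as the paper: the short exact sequence of graded $\mathrm{Sym}\,H^0(X,H)$-modules given by multiplication by $s$, the collapse of the Koszul long exact sequence because $\cdot s$ is null-homotopic, the splitting $H^0(X,H)\simeq \C\{s\}\oplus H^0(D,K_D)$ giving $K_{p,q}(N,S)\simeq K_{p,q}(D,-L_D,K_D)\oplus K_{p-1,q}(D,-L_D,K_D)$, and induction on $p$. The ``main obstacle'' you flag at the end is not actually needed: since all spaces are finite-dimensional, the two decompositions give $\dim K_{p,q}(M,S)+\dim K_{p-1,q}(M,S)=\dim K_{p,q}(D,-L_D,K_D)+\dim K_{p-1,q}(D,-L_D,K_D)$, and cancelling the $(p-1)$-terms by the inductive hypothesis yields the isomorphism without tracing the connecting map through the splitting --- which is exactly how the paper concludes.
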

\begin{proof}
The proof proceeds along the lines of \cite[Theorem  3.b.7]{green-koszul}. By the assumptions, we have a short exact sequence of $\mbox{Sym } H^0(X,H)$-modules
$$0 \to \bigoplus_{q \in \mathbb{Z}} H^0\bigl(X,(q-1)H-L\bigr) \to \bigoplus_{q \in \mathbb{Z}} H^0\bigl(X,qH-L\bigr) \to  \bigoplus_{q \in \mathbb{Z}} H^0\bigl(D,qK_D-L_D\bigr) \to 0,$$ where the first map is multiplication by the section $s \in H^0(X,H)$ defining $D$. Let $B$ denoted the graded $\mbox{Sym } H^0(X,H)$-module $\bigoplus_{q \in \mathbb{Z}} H^0(D,qK_D-L_D)$, and write its associated Koszul cohomology groups as $K_{p,q}(B, H^0(X,H))$. The above short exact sequence induces a long exact sequence at the level of Koszul cohomology
\begin{equation}\label{koszulseq}
\cdots \to K_{p,q-1}(X,-L,H) \to K_{p,q}(X,-L,H) \to K_{p,q}(B, H^0(X,H)) \to K_{p-1,q}(X,-L,H) \to \cdots
\end{equation}
The maps $K_{p,q-1}(X,-L,H) \to K_{p,q}(X,-L,H)$ are induced by multiplication by $s$, and hence are zero, see also \cite[1.6.11]{green-koszul}.
Choose a splitting
$$ H^0(X,H) \simeq \C \{s \} \oplus H^0(D,K_D).$$
This induces isomorphism
$$f_p \; : \; \bigwedge^p H^0(X,H) \simeq \bigwedge^{p-1} H^0(D,K_D) \oplus \bigwedge^{p} H^0(D,K_D).$$

The Koszul cohomology of the module $B$ is computed by the cohomology of the complex $\mathcal{A}_{\bullet}$
 \begin{align*}
 \to &\bigwedge^p H^0(D,K_D)\otimes H^0\bigl(D,(q-1)K_D-L_D\bigr) \bigoplus \bigwedge^{p+1} H^0(D,K_D)\otimes H^0\bigl(D,(q-1)K_D-L_D\bigr) \to \\
  &\bigwedge^{p-1} H^0(D,K_D)\otimes H^0(D,qK_D-L_D) \bigoplus \bigwedge^{p} H^0(D,K_D)\otimes H^0(D,qK_D-L_D) \to \cdots
 \end{align*}
 where the maps being equal to $(-d_{p,q-1}, d_{p+1,q-1})$, with
 $$d_{p+1,q-1}: \bigwedge^{p+1}H^0(D,K_D)\otimes H^0\bigl(D,(q-1)K_D-L_D\bigr) \to \bigwedge^{p}H^0(D,K_D)\otimes H^0(D,qK_D-L_D)$$ and its shift $d_{p, q-1}$ being Koszul differentials
 (note that we have used that $s$ vanishes along $D$).
 Thus we have
 $$ K_{p,q}(B, H^0(X,H)) \simeq K_{p,q}(D,-L_D,K_D) \oplus K_{p-1,q}(D,-L_D,K_D)$$ and hence via the sequence (\ref{koszulseq}), we obtain
 $$ K_{p,q}(X,-L,H) \oplus K_{p-1,q}(X,-L,H) \simeq K_{p,q}(D,-L_D,K_D) \oplus K_{p-1,q}(D,-L_D,K_D).$$
 For $p=0$, this becomes $K_{0,q}(X,-L,H) \simeq K_{0,q}(D,-L_D,K_D)$. The claim follows by induction on $p$.
\end{proof}

Now let, $X$ be a K3 surface, and let $L,H\in \mbox{Pic}(X)$ and $D\in |H|$ be as in the hypotheses of the lemma above. Assume further that $H^0(X,H-L)=0$, and let $C \in |L|$ be a smooth, integral curve. Following \cite[Theorem  3.b.1]{green-koszul}, we have a long exact sequence
$$ \cdots \to K_{p,q}(X,H) \to K_{p,q}(C,H_C) \to K_{p-1,q+1}(X,-L,H) \simeq K_{p-1,q+1}(D,-L_D,K_D)\to \cdots $$
Thus we have:
\begin{prop} \label{prop1}
In the above situation, assume  $K_{p,q}(X,H)=K_{p-1,q+1}(D,-L_D,K_D)=0$. Then $K_{p,q}(C,H_C)=0$.
\end{prop}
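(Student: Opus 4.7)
The statement is essentially a direct two-step consequence of the two results stated immediately before it, so my plan is simply to splice them together. The displayed long exact sequence of Koszul cohomology (Green's Theorem 3.b.1 applied to the short exact sequence $0\to\OO_X(-C)\to\OO_X\to\OO_C\to 0$ tensored with the appropriate powers of $H$) yields the fragment
$$K_{p,q}(X,H)\longrightarrow K_{p,q}(C,H_C)\longrightarrow K_{p-1,q+1}(X,-L,H).$$
The leftmost term vanishes by hypothesis. For the rightmost term, I would invoke the preceding lemma: the standing hypotheses of this subsection — namely $H$ base point free, $(H\cdot L)>0$, and $H^1(X,qH-L)=0$ for all $q\geq 0$ — are exactly the hypotheses of the lemma, which therefore supplies the identification
$$K_{p-1,q+1}(X,-L,H)\;\cong\;K_{p-1,q+1}(D,-L_D,K_D),$$
and the right-hand side vanishes by hypothesis. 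Exactness at the middle spot then forces $K_{p,q}(C,H_C)=0$, which is the desired conclusion.

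There is essentially no obstacle: everything has been set up. The only point that warrants a quick sanity check is that the extra hypothesis $H^0(X,H-L)=0$ stated in the proposition is the one needed to legitimise Green's long exact sequence in the form displayed (it ensures the multiplication-by-$s$ map in the relevant degree behaves correctly, so that the connecting map lands in $K_{p-1,q+1}(X,-L,H)$ rather than being perturbed by a spurious $H^0$). Since the excerpt already writes the long exact sequence out explicitly, this verification is a formality, and the proof reduces to citing the lemma plus the sequence.
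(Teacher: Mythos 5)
Your proposal is correct and is exactly the paper's argument: the proposition is stated as an immediate consequence of Green's long exact sequence from \cite[Theorem 3.b.1]{green-koszul} combined with the preceding lemma's identification $K_{p-1,q+1}(X,-L,H)\simeq K_{p-1,q+1}(D,-L_D,K_D)$, with the hypothesis $H^0(X,H-L)=0$ playing precisely the role you indicate. Nothing further is needed.
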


We will also make use of the following  result of Mayer's \cite[Proposition 8]{mayer-families}.
\begin{prop} \label{bpf-prop}
Let $X$ be a K3 surface and let $L \in \mathrm{Pic}(X)$ be a big and nef line bundle. Assume there is no smooth elliptic curve
$F$ with $(F \cdot L)=1$. Then $L$ is base point free.
\end{prop}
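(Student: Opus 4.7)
The plan is to follow Saint-Donat's classical analysis of linear systems on a $K3$ surface. Since $L$ is big and nef and $K_X = \mathcal{O}_X$, Kawamata--Viehweg vanishing gives $H^i(X,L)=0$ for $i>0$, and Riemann--Roch yields $h^0(X,L) = 2 + L^2/2 \geq 3$. Thus $|L|$ is nonempty and I may write $|L| = |M| + Z$, where $|M|$ is the movable part and $Z$ the effective fixed part. Since every base point of $|L|$ must either be a base point of $|M|$ or lie in the support of $Z$, the goal is to derive a contradiction from the existence of a base point under the hypothesis that no smooth elliptic curve $E$ satisfies $E \cdot L = 1$.

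The first technical step is to show that the movable part $|M|$ is itself base-point-free. A Bertini-type argument on the smooth surface $X$ shows that $|M|$ can have at most isolated base points, and that their presence forces $M^2 = 0$; in that case a further standard argument produces a smooth elliptic curve $E$ with $|M| = |aE|$ for some integer $a \geq 1$, and such a pencil of elliptic curves is base-point-free. So in all cases $|M|$ is base-point-free, and the morphism $\phi_M$ is either generically finite ($M^2 > 0$) or an elliptic fibration ($M^2 = 0$). It remains to rule out $Z \neq 0$.

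Finally, I would extract from a nontrivial $Z$ an elliptic curve $E$ with $E \cdot L = 1$. In the elliptic case $L = aE + Z$ one has $L \cdot E = Z \cdot E \geq 0$; since $L$ is big and $Z$ is nonzero, some irreducible component $\Gamma$ of $Z$ meets $E$ nontrivially. The fact that $\Gamma$ lies in the fixed part of $|L|$, combined with adjunction on the $K3$, forces $\Gamma^2 = -2$, i.e.\ $\Gamma$ is a smooth rational $(-2)$-curve; a short numerical argument using nefness and bigness of $L$ then pins $E \cdot \Gamma$ down to $1$, giving $L \cdot E = 1$ and the desired contradiction. In the remaining case $M^2 > 0$, one iterates: contracting a $(-2)$-component of $Z$, or equivalently replacing $L$ by $L - \Gamma$ and reapplying the analysis, eventually reduces to the elliptic case and again produces an elliptic curve with $E \cdot L = 1$. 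The main obstacle is this last combinatorial extraction, which requires delicate control over the configuration of $(-2)$-curves in $Z$ and is where the full strength of Saint-Donat's classification is essential.
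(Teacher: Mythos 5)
The paper does not actually prove this proposition: it is quoted verbatim from Mayer [M, Proposition 8], and is equivalent to the classical Saint-Donat/Mayer description of base loci of big and nef line bundles on $K3$ surfaces. So there is no in-paper argument to compare yours against. Your outline does follow the standard route for that classical result: decompose $|L|=|M|+Z$ into moving and fixed parts, show $|M|$ is base point free with either $M^2>0$ or $M\sim aE$ for a smooth elliptic curve $E$, and derive from $Z\neq 0$ an elliptic curve of degree one against $L$. That overall structure is correct.

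The decisive step, however, is left unproved, and the way you propose to fill it is off in both branches. In the branch $M^2>0$, the iteration $L\mapsto L-\Gamma$ is both unnecessary and problematic: $L-\Gamma$ need not remain nef, and nothing guarantees the process lands in the elliptic case. The clean argument is a dimension count. Since $L$ is big and nef, $h^1(X,L)=0$, so $h^0(X,L)=2+\tfrac{1}{2}L^2$; on the other hand $h^0(X,L)=h^0(X,M)=2+\tfrac{1}{2}M^2$ because the general member of $|M|$ is irreducible. Hence $L^2=M^2$, while nefness of $L$ and of $M$ gives $L^2\geq L\cdot M\geq M^2$; equality forces $L\cdot Z=M\cdot Z=0$, hence $Z^2=0$, and the Hodge index theorem then forces $Z=0$, so there is no fixed part (and no base points, by your Bertini step). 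In the branch $M\sim aE$, the same count gives $h^0(X,L)=h^0(X,aE)=a+1$, i.e.\ $L^2=2a-2$. Writing $L^2=2a(E\cdot Z)+Z^2$ and using $L\cdot Z=a(E\cdot Z)+Z^2\geq 0$, one gets $a(E\cdot Z)\leq 2a-2$, while $E\cdot Z=E\cdot L\geq 1$ by the Hodge index theorem; therefore $a\geq 2$ and $E\cdot Z=1$, so $L\cdot E=1$, contradicting the hypothesis. In particular no control over the configuration of $(-2)$-curves in $Z$ is needed, contrary to what you anticipate; the only nontrivial inputs are $h^1(X,L)=0$ and the fact that a fixed-component-free system with positive self-intersection has irreducible general member. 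As it stands, your proposal identifies the right strategy but does not establish the one implication the proposition actually asserts.
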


We now turn our attention to the Green-Lazarsfeld Secant Conjecture \cite{green-lazarsfeld-projective}. We fix a general curve $C$ of genus $g$ and an integer $d\geq 2g+1-\mbox{Cliff}(C)$. Using
\cite[p. 222]{ACGH}, we observe that a general line bundle $L\in \mbox{Pic}^d(C)$ is projectively normal, that is, the multiplication maps
$$\mbox{Sym}^n H^0(C,L)\rightarrow H^0(C, L^{\otimes n})$$ are surjective for all $n$. Since obviously also $H^1(C,L)=0$, via \cite[Lemma 1.10]{GL3}, we conclude that $\phi_L:C\rightarrow \PP^{d-g}$
verifies property $(N_p)$ for some integer $p\geq 0$ with $d\geq 2g+p+1-\mbox{Cliff}(C)$, (that is, $K_{j,2}(C,L)=0$ for all $j\leq p$), if  and only if one \emph{single} vanishing $K_{p,2}(C,L)=0$ holds.

\vskip 4pt

 Proposition \ref{reduction-by-one} is used to reduce the proof of Theorem \ref{secgen} to the following cases:

\begin{prop}\label{reduction}
 Let $C$ be a general curve of genus $g$. In order to conclude that the G-L Secant Conjecture holds for $C$ and for general line bundles on $C$ in each degree, it suffices to exhibit a non-special line bundle $L\in \mathrm{Pic}^d(C)$  such that $K_{p,2}(C,L)=0$, in each of the following cases:
\begin{enumerate}
\item $g=2i+1$, $d=2p+2i+4$ \ and \ $p \geq i-1$
\item $g=2i$, $d=2p+2i+3$ \ and \ $p \geq i-1$.
\end{enumerate}
\end{prop}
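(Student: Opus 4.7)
The plan is to combine the projective-normality remark preceding the proposition, which reduces property $(N_p)$ for a non-special line bundle to the single vanishing $K_{p,2}(C,L)=0$, with semicontinuity of Koszul cohomology over $\mathrm{Pic}^d(C)$ and the reduction principle of Proposition \ref{reduction-by-one}. Together these will transport vanishings from the two listed cases to every other pair $(p,d)$ in the range of the Secant Conjecture for general $C$ and general $L$.

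First I would work out the relevant range. Since $\mathrm{Cliff}(C)=\lfloor(g-1)/2\rfloor$ for a general curve, the Secant Conjecture with $L$ non-special concerns integers $d\geq 2g+p+1-\mathrm{Cliff}(C)$ and is vacuous for $d<g+2p+3$. A direct computation identifies the minimum degree $d_p$ at which a vanishing is required: in odd genus $g=2i+1$,
$$d_p=\begin{cases}2p+2i+4,& p\geq i-1,\\ 3i+p+3,& p\leq i-1,\end{cases}$$
with the two branches agreeing at $p=i-1$, and a completely analogous two-branch formula with $2p+2i+3$ and $3i+p+2$ holds in even genus $g=2i$. By semicontinuity of $h^1(C,\bigwedge^{p+1}M_L\otimes L)$, it suffices for each $(p,d)$ in this range to exhibit a single non-special $L\in\mathrm{Pic}^d(C)$ with $K_{p,2}(C,L)=0$.

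The reduction step then carries the vanishings. Given a general $L\in\mathrm{Pic}^{2p_0+2i+4}(C)$ with $p_0\geq i-1$ and $K_{p_0,2}(C,L)=0$ (case (1), odd genus), I would choose general points $x_1,\ldots,x_k\in C$ so that each intermediate $L(-x_1-\cdots-x_j)$ remains non-special and globally generated, and apply Proposition \ref{reduction-by-one} iteratively to obtain $K_{p_0-k,2}(C,L(-x_1-\cdots-x_k))=0$. Because varying $L$ and the $x_j$'s sweeps out all of $\mathrm{Pic}^{2p_0+2i+4-k}(C)$, the resulting pair $(p',d')=(p_0-k,\,p_0+p'+2i+4)$ is realized by a general line bundle of degree $d'$, and as $p_0\geq\max(p',i-1)$ varies the degree $d'$ traces out exactly the integers $\geq d_{p'}$. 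An identical bookkeeping in even genus, starting from a general $L$ of degree $2p_0+2i+3$ with $p_0\geq i-1$, handles that parity via case (2).

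The real work is entirely absorbed into the two base cases (1) and (2); the proof of Proposition \ref{reduction} itself is essentially combinatorial bookkeeping. The main obstacle, and the reason for the careful choice of the two reduction cases, is that the inequalities (\ref{ineq1}) and (\ref{ineq2}) become simultaneously sharp at $p=i-1$ in both parities, forcing the two cases to start precisely there; the base vanishings will be established in Sections 3 and 4 (Theorems \ref{k3i} and \ref{paros2}) by degenerating $C$ to a smooth curve on a Picard-rank-two K3 surface and performing explicit cohomological calculations.
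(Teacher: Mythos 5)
Your proposal is correct and follows essentially the same route as the paper: reduce to the single vanishing $K_{p,2}(C,L)=0$ via the projective-normality remark, use semicontinuity to pass to general $L$, and transport the base-case vanishings in degrees $2p_0+2i+4$ (resp.\ $2p_0+2i+3$) to all other $(p,d)$ by subtracting general points via Proposition \ref{reduction-by-one}. The only cosmetic difference is that the paper organizes the bookkeeping by the parity of $d$ and by whether $p\geq i-1$ or $p\leq i-1$ (pivoting at degree $4i+2$), whereas you run a single uniform "subtract $k$ points" argument; the arithmetic $d'=p_0+p'+2i+4$ with $p_0\geq\max(p',i-1)$ checks out in both ranges.
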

\begin{proof}
As $C$ is general, $\text{Cliff}(C) = \lfloor \frac{g-1}{2} \rfloor$. We explain the case $g=2i+1$, the remaining even genus case being similar.
In the case $p \geq i-1$, the two inequalities (\ref{ineq1}) and (\ref{ineq2}), that is,
$$d \geq 2g+p+1-\text{Cliff}(C)  \ \mbox{ and } \ d \geq g+2p+3$$ respectively, reduce to the single inequality
$$d \geq 2p+2i+4. $$
If $d$ is even, we write $d=2q+2i+4$, where $q\geq p$. By assumption, we can find a line bundle $L\in \mbox{Pic}^d(C)$ with $K_{q,2}(C,L)=0$.  We may assume $L$ to be projectively normal and then, as explained, it follows that $K_{p,2}(C,L)=0$. If $d$ is odd, we write $d=2q+2i+5$, where again $q\geq p$. By assumption, there exists a line bundle $L'\in \mbox{Pic}^{d+1}(C)$ with $K_{q+1}(C,L')=0$. We set $L:=L'(-x)$, where $x\in C$ is a general point. By Proposition \ref{reduction-by-one}, we find that $K_{q,2}(C,L)=0$, hence $K_{p,2}(C,L)=0$ as well.

\vskip 3pt

In the range $p \leq i-1$, the inequalities (\ref{ineq1}) and (\ref{ineq2}) reduce to the inequality
$$d\geq 3i+p+3 \ \bigl(=2g+p+1-\mbox{Cliff}(C)\bigr).$$ If $d\leq 4i+2$, we apply the assumption in degree $4i+2$ to find a line bundle $L'\in \mbox{Pic}^{4i+2}(C)$ such that $K_{i-1,2}(C,L')=0$. We then choose a general effective divisor $D\in C_{4i+2-d}$ and set $L:=L'(-D)\in \mbox{Pic}^d(C)$. Via Proposition \ref{reduction-by-one}, we conclude that $K_{d-3i-3,2}(C,L)=0$, hence $K_{p,2}(C,L)=0$, as well. If, on the other hand $d\geq 4i+2$, then for even degree, we write $d=2q+2i+4$, where $q\geq i-1$ and then apply the assumption as in the previous case. The case when $d$ is odd is analogous.
\end{proof}

\vskip 3pt

Especially significant in our study is the divisorial case of the G-L Secant Conjecture:
$$g=2i+1, \ d=4i+2, \ p=i-1.$$
Using (\ref{diffbetti}), note that in this case $\mbox{dim } K_{i-1,2}(C,L)=\mbox{dim } K_{i,1}(C,L)$.
For a pair $[C,L]\in \mathfrak{Pic}^{2g}_g$, we consider the embedding $\phi_L:C\hookrightarrow \PP^{g}$ and denote by $I(L)$ the graded ideal of $\phi_L(C)$. The next observation, to be used in the proof of Theorem \ref{hatareset}
provides a lower bound for the number of syzygies of $L$, when the curve $C$ has Clifford index less than maximal.

\begin{prop}\label{scroll}
Let $C$ be a smooth curve of genus $g=2i+1$ having gonality at most $i+1$ and $L$ a line bundle of degree $2g$ on $C$. Then
$\mathrm{dim}  \ K_{i,1}(C,L)\geq i.$
\end{prop}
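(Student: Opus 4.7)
The plan is to exhibit a rational normal scroll $S\subset \PP^g$ of codimension $i$ containing the embedded curve $\phi_L(C)$, and to extract $i$ linearly independent classes in $K_{i,1}(C,L)$ from its Eagon--Northcott resolution.

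Let $|A|$ be a pencil of degree $i+1$ on $C$, which exists by the gonality hypothesis; after passing to its moving part we may assume $A$ is base-point-free. Since $\deg L = 2g$, one has $h^0(C,L) = g+1 = 2i+2$, and Riemann--Roch yields
$$h^0(C, L-A) = i+1 + h^0\bigl(C, K_C - L + A\bigr) \geq i+1.$$
Assume first the generic case $h^0(L-A) = i+1$. Then for every $D\in |A|$ the divisor $\phi_L(D)$ imposes independent conditions on $|L|$, so $\overline{\phi_L(D)}$ is a $\PP^{i}\subset \PP^g$, and the union
$$S := \bigcup_{D\in |A|}\overline{\phi_L(D)}\ \subset\ \PP^g$$
is the image of the projective bundle $\PP(\pi_*L)$ attached to the morphism $\pi\colon C\to \PP^1$ defined by $|A|$. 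It is a minimal rational normal scroll of dimension $i+1$, codimension $i$, and degree $i+1$, and it contains $\phi_L(C)$.

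The homogeneous ideal $I_{S/\PP^g}$ is generated by the $2\times 2$ minors of a $2\times (i+1)$ matrix of linear forms, and is resolved by the Eagon--Northcott complex. A direct reading of the Betti numbers gives
$$\dim K_{p,1}\bigl(\PP^g,\OO_S\bigr) = p\binom{i+1}{p+1}\qquad (1\leq p\leq i),$$
and in particular $\dim K_{i,1}(\PP^g, \OO_S) = i$. Since $I_S\subseteq I_C$, one has a morphism of minimal free resolutions lifting this inclusion. By minimality of both resolutions, the induced map on Koszul cohomology at the extremal step $(p,q) = (i,1)$ is injective: the $i$ top-degree linear syzygies of the Eagon--Northcott complex persist as $i$ linearly independent classes in $K_{i,1}(C,L)$, giving the desired bound.

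The main obstacle is the degenerate case $h^0(L-A) > i+1$, which occurs exactly when $K_C - L + A$ is effective, i.e.~when $L$ lies in the translate $K_C + A - C_{i-1}\subset \mathrm{Pic}^{2g}(C)$. There the fibers $\overline{\phi_L(D)}$ jump in dimension and $S$ has smaller codimension, so the direct Eagon--Northcott count yields fewer syzygies at position $(i,1)$. One handles this case by supplementing the scroll with the extra syzygies arising from the multiplication $H^0(A)\otimes H^0(L-A)\to H^0(L)$, equivalently by pushing the above argument through the (non-minimal) rational normal scroll of higher rank produced by the enlarged base-point-free pencil trick, thereby recovering the missing classes in $K_{i,1}(C,L)$.
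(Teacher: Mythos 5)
Your core idea -- sweeping out the rational normal scroll $S=\bigcup_{D\in |A|}\overline{\phi_L(D)}$ from the gonality pencil and harvesting its top linear syzygies -- is exactly the paper's construction; the only cosmetic difference is that you count $\dim K_{i,1}(\PP^g,\OO_S)=i$ from the Eagon--Northcott complex, whereas the paper writes the $i$ classes $\gamma_\ell$ down explicitly. Two points, however, need attention, and the second is a genuine gap.

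First, the justification ``by minimality of both resolutions, the induced map on Koszul cohomology at the extremal step is injective'' is not a valid general principle: a map of minimal free resolutions induced by an inclusion of ideals need not be injective on Betti spaces. The injectivity you want is nonetheless true, but for a different reason: since $I_1=0$ for a nondegenerate embedding, $K_{p,1}$ is computed as the honest kernel $\mathrm{Ker}\bigl\{\bigwedge^{p-1}V\otimes I_2\rightarrow \bigwedge^{p-2}V\otimes I_3\bigr\}$ with no quotient taken, and the inclusions $(I_S)_2\subseteq (I_C)_2$, $(I_S)_3\subseteq (I_C)_3$ immediately give an inclusion of these kernels. You should say this (or, as the paper does, exhibit the classes explicitly).

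Second, and more seriously, your treatment of the case $h^0(L-A)>i+1$ is not a proof, and the geometric description there is backwards: when $D\in|A|$ fails to impose independent conditions on $|L|$, the span $\overline{\phi_L(D)}$ \emph{drops} in dimension, so the swept-out variety has \emph{larger} codimension, not smaller; the phrase ``supplementing the scroll with the extra syzygies arising from the multiplication map'' is a gesture, not an argument. The paper disposes of this case before it arises: since $\deg L=2g>2g-2$, the bundle $L$ is always non-special, so $\dim K_{i,1}(C,L)$ is upper semicontinuous over the (irreducible) universal Picard variety over the Hurwitz divisor $\cM^1_{2i+1,i+1}$, and it therefore suffices to prove the bound for a \emph{general} $(i+1)$-gonal curve and a \emph{general} $L$ -- for which one may assume $h^0(C,L\otimes A^{\vee})=i+1$, that $\mu_{A,L\otimes A^{\vee}}$ is an isomorphism, and that $\phi_L(C)$ is projectively normal. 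Without this reduction (or a genuine argument in the degenerate locus $L\in K_C+A-C_{i-1}$), your proof establishes the proposition only for general $L$, not for every line bundle of degree $2g$ as the statement requires.
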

\begin{proof} It suffices to prove the statement for a general $(i+1)$-gonal curve $[C]\in \cM_{2i+1,i+1}^1$. We fix such a $C$, then denote by $A\in W^1_{i+1}(C)$ a pencil of minimal degree and by $\phi_A:C\rightarrow \PP^1$ the associated covering. Let $L\in \mbox{Pic}^{2g}(C)$ a suitably general line bundle such that $\phi_L(C)$ is projectively normal, $h^0(C, L\otimes A^{\vee})=i+1$ and the multiplication map
$$\mu_{A,L\otimes A^{\vee}}: H^0(C,A)\otimes H^0(C, L\otimes A^{\vee})\rightarrow H^0(C,L)$$
is an isomorphism. The vector bundle $E:=(\phi_A)_{*}(L)$ induces an $(i+1)$-dimensional  scroll $X:=\PP(E)\hookrightarrow \PP H^0(C,L)^{\vee}$ of degree $i+1$ containing $\phi_L(C)$. The equations of $X$ inside $\PP^{2i+1}$ are obtained by taking the $(2\times 2)$-minors of the matrix describing the map $\mu_{A, L\otimes A^{\vee}}$, see \cite{Sch}. Precisely, if $\{\sigma_1, \sigma_2\}$ is a basis of $H^0(C,A)$ and $\{\tau_1, \ldots, \tau_{i+1}\}$ is a basis of $H^0(C,L\otimes A^{\vee})$ respectively, then $X$ is cut out by the quadrics
$$q_{\ell}:= (\sigma_1 \tau_{\ell})\cdot (\sigma_2  \tau_{i+1})-(\sigma_1 \tau_{i+1})\cdot (\sigma_2 \tau_{\ell}) \in K_{1,1}(C,L),$$
where $\ell=1, \ldots, i$. Recall that there is an isomorphism
$$K_{i,1}(C,L)\cong K_{i-1,2}\bigl(I(L), H^0(C,L)\bigr):=\mbox{Ker}\Bigl\{\bigwedge^{i-1} H^0(C,L)\otimes I_2(L)\rightarrow \bigwedge^{i-2} H^0(C,L)\otimes I_3(L)\Bigr\}.$$
By direct computation, for $\ell=1, \ldots, i$, we write down the following syzygies:
$$\gamma_{\ell}:=\sum_{j=1}^i(\sigma_2\tau_1)\wedge \ldots \wedge (\widehat{\sigma_2 \tau_{j}})\wedge \ldots \wedge (\sigma_2\tau_i)\otimes q_{\ell}\in \bigwedge ^{i-1} H^0(C,L)\otimes I_2(L).$$
Since the quadrics $q_1, \ldots, q_i$ are independent in $K_{1,1,}(C,L)$, we find that $\mbox{dim } K_{i,1}(C,L)\geq i.$
\end{proof}
\subsection{ Syzygies and translates of difference varieties} For a curve $C$ and $a,b\geq 0$, let
$$C_a-C_b\subset \mbox{Pic}^{a-b}(C)$$
be the difference variety, consisting of line bundles of the form $\OO_C(D_{a}-E_b)$, where $D_{a}$ and $E_b$ are effective divisors on $C$ having degrees $a$ and $b$ respectively. A result of \cite{farkas-popa-mustata} provides an identification of the divisorial difference variety, valid for each smooth curve of genus $g$:
\begin{equation}\label{raynaud}
C_{g-j-1}-C_j=\Theta_{\bigwedge ^j Q_C}.
\end{equation}
The right-hand-side here denotes the \emph{theta divisor} of the vector bundle $Q_{C}:=M_{K_C}^{\vee}$, that is,
$$\Theta_{\bigwedge ^j Q_C}:=\Bigl\{\xi \in \mbox{Pic}^{g-2j-1}(C): h^0\bigl(C, \bigwedge^j Q_C\otimes \xi)\geq 1\Bigr\}.$$

We fix non-negative integers $j\leq g-1$ and $a\leq g-j-1$ and introduce the cycle
$$V(C):=\Bigl\{L\in \mbox{Pic}^{g-2j-a-1}(C): L+C_a\subset C_{g-j-1}-C_j\Bigr\}.$$ Obviously $C_{g-j-a-1}-C_j\subset V(C)$. For hyperelliptic curves,  one has set-theoretic equality:

\begin{prop}\label{diffvar1}
Let $C$ be a smooth hyperelliptic curve of genus $g$ and fix integers $j\leq g-1$ and $a\leq g-j-1$.  The following equivalence holds for a line bundle
$L  \in \mathrm{Pic}^{g-2j-a-1}(C)$:
$$L+C_a\subset C_{g-j-1}-C_j \ \Leftrightarrow \  L\in C_{g-j-a-1}-C_j.$$
\end{prop}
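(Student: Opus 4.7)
The implication $(\Leftarrow)$ is immediate: if $L = D - E$ with $D, E$ effective of degrees $g-j-a-1$ and $j$, then for any $F \in C_a$ one has $L + F = (D+F) - E \in C_{g-j-1} - C_j$.

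For $(\Rightarrow)$, my plan is to exploit the hyperelliptic involution $\iota$. The first step is to identify the target set on a hyperelliptic curve $C$: since $K_C = (g-1)A$ and the evaluation map $H^0(K_C) \otimes \OO_C \twoheadrightarrow K_C$ is pulled back via $\pi \colon C \to \PP^1$ from the Euler sequence on $\PP^1$, one computes $M_{K_C} \cong \OO_C(-A)^{\oplus(g-1)}$, hence $\bigwedge^j Q_C \cong \OO_C(jA)^{\oplus \binom{g-1}{j}}$. Combined with the identification (\ref{raynaud}), this gives $C_{g-j-1} - C_j = \Theta_{\bigwedge^j Q_C} = W_{g-1} - jA$ inside $\mathrm{Pic}^{g-2j-1}(C)$. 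Setting $M := L + jA \in \mathrm{Pic}^{g-a-1}(C)$, the hypothesis becomes $M + C_a \subseteq W_{g-1}$, i.e.\ $h^0(M+F) \geq 1$ for every effective divisor $F$ of degree $a$.

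The central reduction is the following lemma, valid on any smooth curve: if $M \in \mathrm{Pic}^{g-a-1}(C)$ satisfies $M + C_a \subseteq W_{g-1}$, then $h^0(M) \geq 1$. I would prove this by contrapositive: assuming $h^0(M) = 0$, Riemann--Roch yields $h^0(K_C - M) = a$, so $|K_C - M|$ has dimension $a-1$ and degree $g + a - 1$. After stripping its base locus, the complete series defines a non-degenerate morphism $C \to \PP^{a-1}$, and a general effective divisor $F$ of degree $a$ maps to $a$ points spanning $\PP^{a-1}$, imposing $a$ independent conditions on $|K_C - M|$. Hence $h^0(K_C - M - F) = 0$ for such $F$, and since $\deg(M + F) = g - 1$, Serre duality then forces $h^0(M + F) = 0$, contradicting the hypothesis.

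To conclude, I would invoke $\iota$. Fix an effective representative $F_0 \sim M$ of degree $g - a - 1$, which exceeds $j$ since $a \leq g - j - 1$. Choose $j$ points $p_1, \ldots, p_j$ in the support of $F_0$ counted with multiplicity, and set $E := \iota(p_1) + \cdots + \iota(p_j)$ of degree $j$ and $B := \sum_{i=1}^{j}(p_i + \iota(p_i)) \in |jA|$. Since $F_0 \geq p_1 + \cdots + p_j$ as divisors, we have $F_0 + E \geq B$, so $D := F_0 + E - B$ is effective of degree $g - j - a - 1$. As $D \sim M + E - jA = L + E$, we obtain $L \sim D - E \in C_{g-j-a-1} - C_j$. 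The main obstacle I anticipate is the effectivity lemma in the presence of a large base locus for $|K_C - M|$, where one must argue carefully that a general $F$ still imposes $a$ independent conditions on the entire (possibly base-full) linear series; the hyperelliptic-specific step at the end is then a clean application of pairing points with their $\iota$-conjugates.
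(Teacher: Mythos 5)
Your proof is correct and follows essentially the same route as the paper: both translate the hypothesis via (\ref{raynaud}) and the splitting $M_{K_C}\cong \OO_C(-A)^{\oplus (g-1)}$ into the statement that a suitable degree-$(g-1)$ line bundle retains a section after subtracting any $a$ points, deduce the key non-vanishing (your $h^0(L+jA)\geq 1$ is Riemann--Roch--equivalent to the paper's $h^0((g-1-j)A-L)\geq a+1$) by letting a general divisor impose independent conditions, and finish by trading points for their hyperelliptic conjugates. The only real difference is cosmetic and lies in the endgame: the paper invokes the classification of complete linear series on a hyperelliptic curve to write $L=(g-j-a-1)A-E$ before conjugating, whereas you subtract $j$ conjugate pairs directly from an effective representative of $L+jA$, which sidesteps that classification.
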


\begin{proof}
Via (\ref{raynaud}), the hypothesis $L+C_a\subset C_{g-j-1}-C_j$ can be reformulated cohomologically:
$$h^0\Bigl(C, \bigwedge ^j M_{K_C}\otimes K_C(-L-D_a)\Bigr)\geq 1, \mbox{ for every effective divisor } D_a\in C_a.$$
If $C$ is hyperelliptic and $A\in W^1_2(C)$ denotes the hyperelliptic pencil, then the kernel bundle splits $M_{K_C}=\bigl(A^{\vee})^{\oplus (g-1)}$, and the previous condition translates into $h^0\bigl(C, A^{\otimes {(g-j-1)}}\otimes (-L-D_a)\bigr)\geq 1$. Since a linear series on a curve can have only finitely many base points, we obtain that
$$h^0(C, (g-1-j)A- L)\geq a+1.$$
On $C$ any complete linear series $\mathfrak g^r_d$ with $d\leq 2g-2$ is of the form
$A^{\otimes r}(y_1+\cdots +y_{d-2r})$,  with  $y_1, \ldots, y_{d-2r}\in C$. It follows that there exists a divisor $E=x_1+\cdots +x_{g-a-1}$ such that $L=A^{\otimes (g-j-a-1)}(-E)$. Denoting by $x_{\ell}'\in C$ the hyperelliptic conjugate of $x_{\ell}$, we  obtain
$$L=\OO_C\bigl(x_1'+\cdots+x_{g-1-a-j}'-x_{g-a-j}-\cdots -x_{g-a-1}\bigr)\in C_{g-j-a-1}-C_j.$$
\end{proof}

A consequence of the above is that for a general curve $C$, we have an equality of cycles
$$V(C)=(C_{g-j-a-1}-C_j)+V'(C),$$ where $V'(C)$ is a residual cycle of dimension at most $g-a-1$. Contrary to our initial expectation, which was later tempered by Claire Voisin, the residual cycle $V'(C)$ can in general be non-empty and have dimension much smaller than $g-a-1$, as the following example shows:

\begin{prop}\label{diffcon} Let $C$ be a general curve of genus $g=2i$ and $A\in W^1_{i+1}(C)$ a pencil of minimal degree. We set $L:=K_C-2A\in \mathrm{Pic}^{2i-4}(C)$. Then $L+C\subset C_{2i-2}-C$, but however $L\notin C_{2i-3}-C$.
\end{prop}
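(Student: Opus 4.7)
Since $\deg L = 2g - 2 - 2(i+1) = 2i - 4$, both claims concern line bundles of degree $2i-4$. The strategy is to translate each statement into a cohomological condition on $2A$ via Riemann-Roch and then exploit the fiber structure of the pencil $\phi_A: C \rightarrow \PP^1$.

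For the containment $L + C \subset C_{2i-2} - C$, the condition $L + x \in C_{2i-2} - C$ is equivalent to the existence of some $y \in C$ for which $L(x+y)$ is effective. Since $\deg L(x+y) = g - 2$, Riemann-Roch gives $h^0(L(x+y)) = h^0(2A - x - y) - 1$, reducing the claim to $h^0(2A - x - y) \geq 2$. Given $x$, choose $y \neq x$ in the fiber $A_{t(x)}$ of $\phi_A$ through $x$; this is possible since $\deg A_{t(x)} = i + 1 \geq 2$. Writing $A_{t(x)} = x + y + R$ with $R$ effective of degree $i - 1$, the divisors $R + A_s$ for $s \in \PP^1$ sweep out a pencil inside $|2A - x - y|$, delivering $h^0(2A - x - y) \geq 2$ at once.

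For the non-containment $L \notin C_{2i-3} - C$, one must show that $L(y)$ is not effective for any $y \in C$. Riemann-Roch on the degree-$(g-3)$ line bundle $L(y)$ now yields $h^0(L(y)) = h^0(2A - y) - 2$, so the claim reduces to $h^0(2A - y) \leq 2$ for every $y \in C$. Since $|A|$ is base-point-free (being the pencil of the finite morphism $\phi_A$), so is $|2A|$, hence $h^0(2A - y) = h^0(2A) - 1$, and the problem collapses to establishing $h^0(2A) = 3$.

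The lower bound $h^0(2A) \geq 3$ is the base-point-free pencil trick: the kernel of the multiplication $H^0(A) \otimes H^0(A) \rightarrow H^0(2A)$ is $H^0(\OO_C) = \C$. The matching upper bound is the principal obstacle. It can be obtained from the classical fact that a general $(i+1)$-gonal curve of genus $2i$ has balanced Tschirnhausen decomposition $(\phi_A)_* \OO_C = \OO_{\PP^1} \oplus \OO_{\PP^1}(-3)^{\oplus i}$, whence $h^0(2A) = h^0(\OO_{\PP^1}(2)) + i \cdot h^0(\OO_{\PP^1}(-1)) = 3$. Equivalently, one verifies that $h^0(K_C - 2A) = 0$ for general $(C, A)$, a Brill-Noether-type genericity statement; without it one cannot exclude that $|2A - y|$ acquires an extra section.
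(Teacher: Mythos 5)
Your argument is correct, and it takes a genuinely different (essentially Serre-dual) route from the paper's. For the containment, the paper works on the canonical side: from $h^0(C,K_C\otimes A^{\vee})=i$ it finds $D\in C_{2i-2}$ with $D+p_1+\cdots+p_{i-1}\in |K_C\otimes A^{\vee}|$, where $x+p_1+\cdots+p_i\in |A|$, and then writes $L(x)=\OO_C(D-p_i)$ directly. You instead apply Riemann--Roch to reduce the claim to $h^0(2A-x-y)\geq 2$ and exhibit the pencil $R+A_s$; both arguments hinge on the same geometric input (the fiber of $\phi_A$ through $x$), and yours is if anything more transparent. For the non-containment, the paper applies the base point free pencil trick to $H^0(A)\otimes H^0(K_C\otimes A^{\vee}(y))\to H^0(K_C(y))$, identifies the kernel with $H^0(L(y))$, and concludes from injectivity of the Petri map. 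You reduce instead to $h^0(2A)=3$ and invoke the balanced Tschirnhausen decomposition of a general $(i+1)$-gonal curve of genus $2i$ (legitimate, since $\rho=0$ here, so the general curve of genus $2i$ is the general $(i+1)$-gonal curve). One remark: the "principal obstacle" you flag, namely $h^0(K_C-2A)=0$, needs no external classical fact --- the base point free pencil trick, which you already used for the lower bound $h^0(2A)\geq 3$, identifies $H^0(K_C-2A)$ with the kernel of the Petri map $H^0(A)\otimes H^0(K_C\otimes A^{\vee})\to H^0(K_C)$, so the vanishing is exactly Petri-genericity of $C$; making that identification explicit would close your argument in one line and bring it back into contact with the paper's.
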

\begin{proof}
For a point $x\in C$, let $p_1+\cdots+p_i\in C_i$ be the divisor such that $x+\sum_{j=1}^i p_j\in |A|$. Since $h^0(C, K_C\otimes A^{\vee})=i$, there exists an effective divisor $D\in C_{2i-2}$ with $D+\sum_{j=1}^{i-1} p_j\in |K_C\otimes A^{\vee}|$. It follows, that
$$L(x)=\bigl(K_C\otimes A^{\vee}\bigr)\otimes A^{\vee}(x)=\OO_C(D+p_1+\cdots+p_{i-1})\otimes A^{\vee}(x)=\OO_C(D-p_i)\in C_{2i-2}-C.$$
On the other, we claim that $L\notin C_{2i-3}-C$. Else, there exists a point $y\in C$ such that $H^0(C, L(y))\neq 0$. Via the Base Point Free Pencil Trick, this is equivalent to saying that the multiplication map $H^0(C,A)\otimes H^0(C, K_C\otimes A^{\vee}(y))\rightarrow H^0(C,K_C(y))$ is not injective. Since $h^0(C, K_C\otimes A^{\vee}(y))=h^0(C, K_C\otimes A^{\vee})$, this implies that the Petri map associated to the pencil $A$ is not injective, a contradiction.
\end{proof}
\begin{remark} The structure of the residual cycle $V'(C)$ remains mysterious. One case that is understood via the exercises in \cite[p.276 ]{ACGH} is that when $C$ is a general curve of genus $4$ and $a=j=1$. Then $V(C)=(C-C)+\bigl\{\mp(K_C-2A)\bigr\}$, where $A\in W^1_3(C)$, that is, $V'(C)$ is a $0$-dimensional cycle.
\end{remark}

\section{The generic Green--Lazarsfeld secant conjecture for curves of odd genus}

In this section we prove Theorem \ref{k3i}. We begin by recalling a few basic facts. Let $\mathfrak{h}$ be an even lattice of rank $\rho+1 \leq 10$ and signature $(1, \rho)$. The moduli space of $\mathfrak{h}$-polarized K3 surfaces exists as a quasi-projective algebraic variety, is nonempty, and has at most two components both of dimension $19-\rho$, which locally on the period domain are interchanged by complex conjugation, \cite{dolgachev}. Complex conjugation here means that a complex surface $X$ with complex structure $J$ is sent to $(X,-J)$.

We fix integers $g=2i+1$ with $i \geq 1$ and $p\geq i-1$, $p \geq 1$. Let $\Theta_{g,p}$ be the rank two lattice with ordered basis $\{ H, \eta \}$ and intersection form:
\[ \left( \begin{array}{cc}
4p+4 & 2p-2i  \\
2p-2i& -4  \end{array} \right)\]
We let $L$ denote the class $H-\eta$. Notice that $(H \cdot L)=2p+2i+4$ and $(L)^2=4i$.
We denote by $\hat{\Theta}_{g,p}$ the rank three lattice with ordered basis $\{ H, \eta, E \}$ and intersection form
\[ \left( \begin{array}{ccc}
4p+4 & 2p-2i & 2 \\
2p-2i & -4  &0 \\
2 & 0 & 0
\end{array} \right).\] Obviously $\Theta_{g,p}$ can be primitively embedded in $\hat{\Theta}_{g,p}$. By the surjectivity of the period mapping, there exist smooth K3 surfaces $Z_g$ and\ $\hat{Z}_g$ respectively with Picard lattices isomorphic to $\Theta_{g,p}$ and $\hat{\Theta}_{g,p}$ respectively, and such that $H$ is big and nef.
\begin{lem}
Let $\alpha, \beta \in \hat{\Theta}_{g,p}$. Then $(\alpha \cdot \beta)$ is even and $(\alpha)^2$ is divisible by four.
\end{lem}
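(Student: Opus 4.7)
The plan is to carry out a direct computation in the basis $\{H, \eta, E\}$, since the claim is purely lattice-theoretic and follows immediately from divisibility properties of the Gram matrix.

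Write $\alpha = aH + b\eta + cE$ and $\beta = a'H + b'\eta + c'E$ with $a, b, c, a', b', c' \in \mathbb Z$. Expanding by bilinearity,
\[
(\alpha \cdot \beta) = aa'(4p+4) + bb'(-4) + (ab'+a'b)(2p-2i) + (ac'+a'c)(2) + (bc'+b'c)(0).
\]
Every entry of the Gram matrix is even: the diagonal entries $4p+4$, $-4$, $0$ are obviously even, and the off-diagonal entries $2p-2i$, $2$, $0$ are even as well. Thus $(\alpha \cdot \beta)$ is an integer linear combination of even numbers and hence even.

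For the self-intersection, specializing to $\beta = \alpha$,
\[
(\alpha)^2 = a^2(4p+4) + b^2(-4) + c^2\cdot 0 + 2ab(2p-2i) + 2ac\cdot 2 + 2bc\cdot 0 = 4\bigl[a^2(p+1) - b^2 + ab(p-i) + ac\bigr].
\]
Here each diagonal contribution is already a multiple of $4$, and the cross-terms pick up an extra factor of $2$ from the usual doubling in $(\alpha)^2$, which combined with the factor of $2$ present in each off-diagonal entry gives a further multiple of $4$. Hence $(\alpha)^2 \in 4\mathbb Z$, as required.

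There is no real obstacle here — the lemma is a one-line check built into the shape of the Gram matrix, and its role in the paper is presumably to allow the authors to invoke general lattice-embedding or Torelli-type results that require the lattice $\hat{\Theta}_{g,p}$ to be even (in fact $2$-elementary divisible in the stronger sense that every self-intersection is divisible by $4$), which is needed to embed $\hat{\Theta}_{g,p}$ into the K3 lattice and to control Picard-rank-three degenerations in the argument that follows.
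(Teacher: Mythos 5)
Your proof is correct and is essentially identical to the paper's: the evenness of $(\alpha\cdot\beta)$ is read off from the even entries of the Gram matrix, and the self-intersection is computed in the basis $\{H,\eta,E\}$ to give $(\alpha)^2 = 4\bigl(a^2(p+1)-b^2+ab(p-i)+ac\bigr)$, exactly as in the paper.
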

\begin{proof}
The first claim is clear, as all entries in the above rank three matrix are even. For the second claim, write $\alpha=aH+b\eta+cE$ for $a,b,c \in \mathbb{Z}$ and compute $$(\alpha)^2=4a^2(p+1)-4b^2+4ab(p-i)+4ac.$$
\end{proof}
 \begin{cor} \label{lattice-cor}
 Let  $Z_g$ respectively\ $\hat{Z}_g$ be K3 surfaces with Picard lattices isomorphic to $\Theta_{g,p}$ respectively\ $\hat{\Theta}_{g,p}$. Suppose a divisor $\alpha$ in $\Theta_{g,p}$ respectively\ $\hat{\Theta}_{g,p}$ is effective. Then $(\alpha)^2 \geq 0$ and $\alpha$ is base point free and nef.
 \end{cor}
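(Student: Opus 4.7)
The plan is to exploit the divisibility-by-four property of self-intersections established in the preceding lemma to rule out $(-2)$-classes in the Picard lattices of $Z_g$ and $\hat{Z}_g$. By adjunction on a $K3$ surface, any irreducible curve $C$ with $C^2<0$ must be a smooth rational curve with $C^2=-2$; since no such class exists, neither $Z_g$ nor $\hat{Z}_g$ carries a smooth rational curve. This single geometric consequence drives all three assertions.

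For the inequality $\alpha^2\geq 0$, I would write an effective $\alpha$ as a non-negative integer combination $\alpha=\sum_i n_i C_i$ of irreducible curves. By the previous paragraph, $C_i^{\,2}\geq 0$ for every $i$, and distinct irreducible curves on a smooth surface meet properly so $(C_i\cdot C_j)\geq 0$ for $i\neq j$. Expanding
$$\alpha^2 \;=\; \sum_i n_i^2\,C_i^{\,2} \;+\; 2\sum_{i<j} n_i n_j\,(C_i\cdot C_j)$$
displays $\alpha^2$ as a sum of non-negative terms. The same decomposition yields nefness: for any irreducible curve $C$ on the surface, either $C$ is not a component of $\alpha$, in which case $(\alpha\cdot C)\geq 0$ automatically, or $C=C_j$ for some $j$, and then $(\alpha\cdot C_j)=n_j C_j^{\,2}+\sum_{i\neq j}n_i(C_i\cdot C_j)\geq 0$.

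For base point freeness, when $\alpha^2>0$ the class $\alpha$ is big and nef, and I would invoke Mayer's Proposition \ref{bpf-prop}. Its hypothesis, the non-existence of a smooth elliptic curve $F$ with $(F\cdot \alpha)=1$, is automatic: the preceding lemma shows that every pairing on $\Theta_{g,p}$ or $\hat{\Theta}_{g,p}$ is even, ruling out the odd value $1$. In the degenerate case $\alpha^2=0$, the class $\alpha$ is nef but not big, so Mayer's criterion does not apply directly; instead I would appeal to Saint-Donat's structure theorem for linear systems on $K3$ surfaces, which asserts that the only obstruction to base point freeness of a nef line bundle is the presence of a $(-2)$-curve in a specific configuration. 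Since there are no $(-2)$-curves on our surfaces, $|\alpha|$ is base point free. (Equivalently, one may argue directly that the equality $\alpha^2=0$ combined with the decomposition above forces $\alpha$ to be a multiple of a primitive isotropic class representing an elliptic pencil, and any multiple of such a pencil is base point free.)

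The main obstacle is precisely this boundary case $\alpha^2=0$: Mayer's Proposition \ref{bpf-prop} covers only the big-and-nef regime, and without invoking the Saint-Donat theory (or an explicit analysis of isotropic classes in the lattice) one cannot conclude base point freeness. Once that structural input is granted, the remainder of the proof is a direct translation of the divisibility statement of the preceding lemma into geometric consequences on the surface.
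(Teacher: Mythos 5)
Your proof is correct and follows essentially the same route as the paper: the parity/divisibility lemma rules out $(-2)$-classes and classes meeting $\alpha$ in odd degree, which yields $(\alpha)^2\geq 0$ and nefness, and Mayer's Proposition \ref{bpf-prop} then gives base point freeness. Your treatment is in fact slightly more careful than the paper's, which invokes Proposition \ref{bpf-prop} without comment even when $(\alpha)^2=0$ (so that $\alpha$ is not big); your appeal to the structure of nef isotropic classes (a multiple of a primitive elliptic-pencil class) correctly closes that small gap.
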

 \begin{proof}
   Suppose a divisor $\alpha$ in $\Theta_{g,p}$ resp.\ $\hat{\Theta}_{g,p}$ is effective. Since there are no $(-2)$ classes in $\Theta_{g,p}$ or in $\hat{\Theta}_{g,p}$, necessarily $(\alpha)^2 \geq 0$ and $\alpha$ is nef. Since there do not exist classes $F$ with $(F)^2=0$, $(F \cdot \alpha)=1$, the class $\alpha$ is base point free by Proposition \ref{bpf-prop}.
 \end{proof}

Our next task is to study the Brill--Noether theory of curves in the linear system $|H|$.
\begin{lem}
There exists a K3 surface $Z_{g}$ with $\mathrm{Pic}(Z_{g}) \simeq  \Theta_{g,p}$ and $H$ big and nef. For a general such $K3$ surface, a general curve $D\in |H|$ is Brill--Noether--Petri general, in particular $\mathrm{Cliff}(D)=p+1$.
\end{lem}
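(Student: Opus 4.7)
The plan has three steps. First, to construct $Z_g$ I would apply the surjectivity of the period map for lattice-polarized K3 surfaces, as in \cite{dolgachev}. The Gram matrix of $\Theta_{g,p}$ has determinant $-16(p+1) - (2p-2i)^2 < 0$, so the lattice has signature $(1,1)$, and its rank $2$ is clearly $\leq 10$; hence the moduli space of $\Theta_{g,p}$-polarized K3 surfaces is nonempty. The computation in Corollary \ref{lattice-cor} shows that every class $\alpha \in \Theta_{g,p}$ satisfies $(\alpha)^2 \in 4\mathbb{Z}$, so there are no $(-2)$-classes; thus the ample cone coincides with a connected component of the positive cone, and after choosing the component containing $H$ we find that $H$ is big and nef. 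Base-point-freeness of $H$ follows from Proposition \ref{bpf-prop}, since $(F \cdot H)$ is always even.

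Second, Bertini's theorem yields a smooth irreducible $D \in |H|$, of genus $g(D) = (H)^2/2 + 1 = 2p+3$. Note that the maximal value of the Clifford index on such a curve is $\lfloor (g(D)-1)/2 \rfloor = p+1$, so the second assertion will follow automatically once Brill--Noether--Petri generality is established.

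Third, for Brill--Noether--Petri generality, I would run the standard Lazarsfeld-type argument adapted to higher Picard number (cf.\ \cite{lazarsfeld-bnp} and its refinement by Donagi--Morrison). If $D$ failed Brill--Noether--Petri generality, there would exist a complete base-point-free $g^r_{d'}$ with $\rho(2p+3,r,d') < 0$ or with non-injective Petri map; the associated Lazarsfeld--Mukai bundle $E$ on $Z_g$ would then fail to be simple, producing a decomposition $H = M + N$ in $\mathrm{Pic}(Z_g) = \Theta_{g,p}$ with $h^0(M), h^0(N) \geq 2$ and with $(M)^2, (N)^2 \geq 0$, $(M \cdot N) \leq d'$. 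Writing $M = aH + b\eta$ so $N = (1-a)H - b\eta$, one reduces the question to checking that the system
\begin{equation*}
(M)^2 = 4\bigl[a^2(p+1) + ab(p-i) - b^2\bigr] \geq 0, \quad (N)^2 \geq 0, \quad (M \cdot N) \leq p+1
\end{equation*}
has no integer solutions $(a,b)$ with $0 < (M \cdot H), (N \cdot H) < (H)^2$. A short case analysis, using that $(M \cdot H) + (N \cdot H) = 4p+4$ and that $(M \cdot N) = \tfrac{1}{2}\bigl((H)^2 - (M)^2 - (N)^2\bigr) = 2p+2 - \tfrac{1}{2}((M)^2+(N)^2)$, rules out all such pairs.

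The main obstacle is the last case analysis: because $\eta^2 = -4$ makes $\Theta_{g,p}$ genuinely indefinite and gives $\eta$ a nontrivial role (in contrast to the Picard-rank-one situation treated in \cite{lazarsfeld-bnp}), one must carefully bound $|b|$ in terms of $a$ using the two nonnegativity conditions on $(M)^2$ and $(N)^2$, and then verify that the resulting finite list of candidates all violate $(M \cdot N) \leq p+1$. Once this is done, no pencil of degree $\leq p+1$ can exist on $D$, so $\mathrm{Cliff}(D) = p+1$, and the full Brill--Noether--Petri generality follows by the same lattice-theoretic obstruction applied to arbitrary $g^r_{d'}$.
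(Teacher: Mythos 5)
Your overall strategy is sound and, once the case analysis is actually carried out, it does prove the lemma; but it takes a genuinely different route from the paper at the decisive step. The paper does not analyse the effective cone of the rank-two lattice directly. Instead it deforms $Z_g$ to the $\hat{\Theta}_{g,p}$-polarized surface $\hat{Z}_g$, whose extra class $E$ satisfies $(E\cdot H)=2$ and $(E\cdot \eta)=0$: any effective class $aH+b\eta$ must then have $a\geq 0$ (intersect with $E$), so in a decomposition $H=A_1+A_2$ one of the $A_i$ has the form $b\eta$, whose square is $-4b^2$, forcing $b=0$ by Corollary \ref{lattice-cor}. This kills the decomposition in two lines, at the cost of a semicontinuity/deformation argument, and is precisely why the paper only claims the statement for a \emph{general} $\Theta_{g,p}$-polarized surface in the component deforming to $\hat{Z}_g$. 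Your approach stays on $Z_g$ itself and characterizes effectivity by Riemann--Roch: since every square in $\Theta_{g,p}$ is divisible by $4$ there are no $(-2)$-classes, so $\alpha\neq 0$ is effective if and only if $(\alpha)^2\geq 0$ and $(\alpha\cdot H)>0$. Carried through, this is in fact slightly stronger than the paper's statement (no genericity or deformation is needed), which is a real advantage.

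The gap is that you assert rather than perform the case analysis, and you organize it around the wrong inequality: the condition $(M\cdot N)\leq p+1$ does no work and is never needed, since the constraints $(M)^2\geq 0$, $(N)^2\geq 0$ and $0<(M\cdot H)<(H)^2$ are already inconsistent (your "finite list of candidates" is empty). Concretely, write $M=aH+b\eta$, $P=p+1$, $s=p-i\geq -1$, so that $(M)^2=4\bigl(a^2P+abs-b^2\bigr)$ and $(M\cdot H)=2\bigl(2aP+bs\bigr)$. For $a=0$ the condition $(M)^2\geq 0$ forces $b=0$, hence $M=0$; for $a=1$ the condition $(N)^2=-4b^2\geq 0$ forces $N=0$. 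For $a\geq 2$ the degree constraint gives $bs<2P(1-a)\leq -2P$, while $(M)^2\geq 0$ gives $b\geq -2aP/\bigl(s+\sqrt{s^2+4P}\bigr)$ when $s>0$, and these are incompatible because $\sqrt{s^2+4P}>s$; the cases $s=0$, $s=-1$ and $a\leq -1$ are eliminated by the same kind of estimate. None of this is difficult, but it is the entire content of the lemma, so it must appear; as written, "a short case analysis rules out all such pairs" is a placeholder rather than a proof. (Also a small slip at the end: on a curve of genus $2p+3$, to conclude $\mathrm{Cliff}(D)=p+1$ you must exclude pencils of degree $\leq p+2$, not $\leq p+1$.)
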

\begin{proof}
We have an obvious primitive embedding $\Theta_{g,p} \hookrightarrow \hat{\Theta}_{g,p}$. Note that on $\hat{Z}_g$, any class of the form $aH+ b\eta\in \mbox{Pic}(\hat{Z}_g)$ with $a<0$ is not effective, as it has negative intersection with $E$. Let $Z_g$ be a general $\Theta_{g,p}$-polarized K3 surface which deforms to $\hat{Z}_g$; i.e.  a general element in an least one of the components of $\Theta_{g,p}$-polarized K3 surfaces. It suffices to establish that the hyperplane class admits no decomposition $H=A_1+A_2$ for divisors $A_1,A_2$ with $h^0(Z_g, A_i) \geq 2$, for $i=1,2$. Indeed, this follows from the proof of \cite[Lemma \ 1.3]{lazarsfeld-bnp}. For precise details, we refer  to \cite[Lemma \ 5.2]{kemeny-singular}).

Suppose we have such a decomposition on a general surface $Z_g$. Then the $A_i$ would deform to effective divisors on $\hat{Z}_g$, so we could write $A_i=a_iH+b_i \eta$ for $a_i, b_i \in \mathbb{Z}$, with $a_i \geq 0$ and $a_1+a_2=1$. Without loss of generality, we may assume that $a_1=0$, so $(A_1)^2=-4 b^2_1$. From Corollary \ref{lattice-cor}, this forces $b_1=0$, so $h^0(\hat{Z}_g, A_1)=1$, which is a contradiction. If $Z_g^c$ is the complex conjugate of $Z_g$ with the induced $\Theta_{g,p}$-polarization, then the claim above clearly also holds for the image of $H$ in $\mathrm{Pic}(Z_g^c)$.
\end{proof}
\begin{cor}
Let $Z_g$ be a general $\Theta_{g,p}$-polarized K3 surface. Then $K_{j,1}(Z_g,H)=0$ for $j \leq p$.
\end{cor}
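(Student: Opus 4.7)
The plan is to reduce the Koszul vanishing on $Z_g$ to a Koszul vanishing on a smooth hyperplane section $D \in |H|$ via the Lefschetz-type principle for syzygies on $K3$ surfaces, and then to conclude by invoking Green's conjecture for $D$. This is the natural continuation of the setup: the preceding lemma was engineered precisely to deliver a hyperplane section of maximal Clifford index.

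First, I would choose $D\in |H|$. By Corollary \ref{lattice-cor}, every effective class on $Z_g$ is nef, has non-negative self-intersection, and is in fact base point free; in particular $|H|$ is base point free. A general $D\in |H|$ is then smooth and irreducible (irreducibility follows from $h^0(Z_g,-H)=0$ via the sequence $0\to \mathcal O_{Z_g}\to \mathcal O_{Z_g}(H)\to \mathcal O_D(H)\to 0$). Adjunction yields $H|_D\cong K_D$ and
$$g_D \;=\; 1+\tfrac{1}{2}H^2 \;=\; 2p+3,$$
and the preceding lemma guarantees that for generic $Z_g$ the curve $D$ is Brill--Noether--Petri general, with $\mathrm{Cliff}(D)=p+1$.

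Second, I would invoke the Lefschetz-type comparison for syzygies of polarized $K3$ surfaces. The required cohomological vanishing $H^1(Z_g,mH)=0$ for all $m\in\mathbb Z$ is immediate from $H$ being big and nef on a $K3$ (Kodaira plus Serre duality), so the argument of Green \cite[Theorem 3.b.7]{green-koszul} --- which the authors have just recycled in the unnumbered lemma preceding Proposition \ref{prop1} --- produces a natural isomorphism
$$K_{j,q}(Z_g,H)\;\cong\; K_{j,q}(D,K_D)$$
for every $j\geq 0$ and every $q\geq 1$. Thus the corollary reduces to the corresponding Koszul vanishing on the canonical curve $D$.

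Third, I would apply Green's conjecture for $D$. This is known for every smooth curve lying on a $K3$ surface, by Voisin \cite{voisin-even,voisin-odd} together with the refinements in \cite{aprodu-farkas-green}. Since $\mathrm{Cliff}(D)=p+1$, the relevant Koszul groups of $(D,K_D)$ vanish in the range $j\leq p$, and combined with Step~2 this yields the stated vanishing $K_{j,1}(Z_g,H)=0$ for $j\leq p$. The delicate ingredient is not the Lefschetz step, which is essentially formal once $H^1(Z_g,mH)=0$ is in place, but rather the identification $\mathrm{Cliff}(D)=p+1$ and the Brill--Noether--Petri genericity of $D$; both have been secured in the preceding lemma via the specialization of $Z_g$ to the auxiliary surface $\hat Z_g$ carrying the rank three lattice $\hat\Theta_{g,p}$.
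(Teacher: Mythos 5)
Your argument is essentially the paper's own proof, which simply notes $\mathrm{Cliff}(D)=p+1$ from the preceding lemma and invokes \cite[Thm.\ 1.3]{aprodu-farkas-green} (Green's conjecture for curves on $K3$ surfaces), with the Lefschetz transfer $K_{j,q}(Z_g,H)\cong K_{j,q}(D,K_D)$ left implicit; your version merely spells out the intermediate steps (base point freeness of $|H|$, smoothness and genus of $D$, the vanishing $H^1(Z_g,mH)=0$). One caveat worth recording: for the genus-$(2p+3)$ curve $D$ of maximal Clifford index $p+1$, Green's conjecture gives $K_{j,2}(D,K_D)=0$ for $j\le p$, equivalently $K_{j,1}(D,K_D)=0$ only for $j\ge p+1$ (indeed $K_{p,1}(D,K_D)\neq 0$ by Green--Lazarsfeld nonvanishing), so the vanishing that actually holds --- and the one fed into Proposition \ref{prop1} in the proof of Theorem \ref{k3i} --- is $K_{j,2}(Z_g,H)=0$ for $j\le p$; the weight index in the statement, and hence in your conclusion, should be $2$ rather than $1$.
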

\begin{proof}
From the above lemma, $\text{Cliff}(D)=p+1$ for $D \in |H|$ general. Thus the result follows from \cite[Thm.\ 1.3]{aprodu-farkas-green}.
\end{proof}

Whereas for a general $\Theta_{g,p}$-polarized $K3$ surface, general smooth curves $D\in |H|$ and $C\in |L|$ respectively, are Brill-Noether general, this is no longer the case for $\hat{\Theta}_{g,p}$-polarized $K3$ surfaces, when both $D$ and $C$ become hyperelliptic.
\begin{lem}\label{hypvanodd}
Let $\hat{Z}_g$ be a general $\hat{\Theta}_{g,p}$-polarized K3 surface. Then $$H^1\Bigl(D, \bigwedge^j M_{K_D}\otimes K_D\otimes \eta_D \Bigr)=0$$ for $j \leq p$ and for a general curve $D \in |H|$.
\end{lem}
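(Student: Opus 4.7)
The plan is to use the elliptic class $E$ to recognize $D \in |H|$ as a hyperelliptic curve and explicitly decompose $\bigwedge^j M_{K_D}$, thereby reducing the desired vanishing to an $H^1$-statement on the $K3$ surface itself that is accessible from the lattice $\hat{\Theta}_{g,p}$.

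First, since every intersection pairing in $\hat{\Theta}_{g,p}$ is even, no class $F$ satisfies $F \cdot H = 1$, so Mayer's Proposition \ref{bpf-prop} shows that $H$ is base-point-free; a general $D \in |H|$ is then smooth and integral of genus $g_D = 2p+3$. Riemann-Roch and the sign of $E \cdot H$ force $h^0(E) \geq 2$, so $E$ is effective and, by Corollary \ref{lattice-cor}, base-point-free. Since $E^2 = 0$ and $E \cdot H = 2$, the restriction $A_D := E|_D$ is a $g^1_2$, exhibiting $D$ as hyperelliptic. Pulling back the Euler-type sequence from $\PP^1$ via the hyperelliptic cover $D \to \PP^1$ identifies $M_{K_D}$ with $A_D^{\vee \oplus (g_D - 1)}$, so $\bigwedge^j M_{K_D}$ is a direct sum of copies of $A_D^{\otimes -j}$. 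Applying Serre duality on $D$ then reduces the required vanishing to
$$H^0\bigl(D, A_D^{\otimes j} \otimes \eta_D^{\vee}\bigr) = 0 \quad \text{for all } j \leq p.$$

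Recognizing that $A_D^{\otimes j} \otimes \eta_D^{\vee} = \OO_{\hat{Z}_g}(jE - \eta)|_D$, I would then apply the restriction sequence
$$0 \to \OO_{\hat{Z}_g}(jE - \eta - H) \to \OO_{\hat{Z}_g}(jE - \eta) \to \OO_D(jE - \eta) \to 0.$$
Since $(jE - \eta)^2 = -4$, Corollary \ref{lattice-cor} rules out effectivity and gives $h^0(\hat{Z}_g, jE - \eta) = 0$; likewise $(jE - \eta - H) \cdot H < 0$ in the range $j \leq p$ yields $h^0(\hat{Z}_g, jE - \eta - H) = 0$. The problem therefore reduces to $H^1(\hat{Z}_g, jE - \eta - H) = 0$, or equivalently, by Serre duality on the $K3$, to $H^1(\hat{Z}_g, F) = 0$ with $F := H + \eta - jE$.

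A direct computation gives $F^2 = 8p - 4i - 4j$ and $F \cdot H = 6p - 2i - 2j + 4 > 0$ throughout the range $j \leq p$, $p \geq i - 1$. Because $\hat{\Theta}_{g,p}$ contains no $(-2)$-classes, the nef cone of $\hat{Z}_g$ equals the closure of the positive cone. When $F^2 > 0$ the class $F$ is therefore nef and big, so Kawamata-Viehweg gives $h^1(F) = 0$. When $F^2 = 0$, the class $F$ is automatically primitive in $\mathrm{Pic}(\hat{Z}_g)$ (its coefficient of $H$ is $1$), hence by Saint-Donat's classification $|F|$ is a base-point-free elliptic pencil and $h^1(F) = 0$ again. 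The remaining case $F^2 < 0$ arises only for $p = i - 1$, $j = p$, where $\chi(F) = 0$ while neither $F$ nor $-F$ can be effective (by the lattice and the sign of $F \cdot H$), so $h^1(F) = -\chi(F) = 0$. The main technical point I anticipate is the boundary case $F^2 = 0$, where nef-and-big vanishing no longer applies; this is resolved cleanly because the lattice makes $F$ visibly primitive.
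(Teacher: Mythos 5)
Your proof is correct, and its skeleton coincides with the paper's: specialize so that $E$ cuts out a $\mathfrak g^1_2$ on $D$, use the hyperelliptic splitting $\bigwedge^j M_{K_D}\cong\OO_D(-jE)^{\oplus\binom{2p+2}{j}}$, and reduce to line-bundle cohomology on $\hat Z_g$ via restriction sequences and the lattice. The difference is in the endgame. The paper keeps the $H^1$ on $D$, i.e.\ shows $H^1\bigl(D,((2p+2-j)E+\eta)_D\bigr)=0$ directly from the sequence for $M:=(2p+2-j)E+\eta$; since $(M)^2=-4$ and $(M-H)^2\leq -4$, every required vanishing ($h^1(M)=0$ and $h^2(M-H)=h^0(H-M)=0$) follows from non-effectivity of negative-square classes (Corollary \ref{lattice-cor}) plus Riemann--Roch, with no vanishing theorem needed. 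You instead apply Serre duality on $D$ first, which shifts the burden to $H^1(\hat Z_g, H+\eta-jE)=0$ for the Serre-dual class $F=H+\eta-jE$; since $F$ is usually big and nef you must invoke Kawamata--Viehweg/Ramanujam, and you must separately treat the boundary cases $F^2=0$ (where primitivity of $F$ saves you, as you correctly note) and $F^2=-4$ (only $j=p=i-1$, handled by $\chi(F)=0$ and non-effectivity of $\pm F$). Both routes are valid; your case analysis is accurate (in particular $F^2=8p-4i-4j$ and $F\cdot H=6p-2i-2j+4>0$ check out, and effectivity of $F$ when $F^2\geq 0$ follows from $\chi(F)\geq 2$ and $h^0(-F)=0$), but the paper's choice of which group to compute on the surface arranges for every auxiliary class to have negative square, which makes the lattice check uniform and avoids the vanishing theorem and the three-way split.
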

\begin{proof}
As $D$ is hyperelliptic, we have the following splitting $$\bigwedge^{j} M_{K_D} \simeq \mathcal{O}_D(-jE)^{\oplus {2p+2\choose j}} $$ and $K_D\cong 2(p+1)E_D$, see \cite[Prop. 3.5]{farkas-popa-mustata}. We need to show $H^1\bigl(D,\eta_D((2p+2-j)E_D)\bigr)=0$. Since $((2p+2-j)E+\eta)^2=-4$, we have $h^0(\hat{Z}_g,(2p+2-j)E+\eta)=h^2(\hat{Z}_g,(2p+2-j)E+\eta)=0$ and thus $h^1(\hat{Z}_g,(2p+2-j)E+\eta)=0$, using Corollary \ref{lattice-cor}. Lastly, we compute
$$((2p+2-j)E+\eta-H)^2=4(i+j)-8p-8 \leq 4(i-p)-8 \leq -4$$ and therefore $h^2( \hat{Z}_g, (2p+2-j)E+\eta-H)=0$. Thus $H^1(D,\eta_D((2p+2-j)E_D)\bigr)=0$ as required.
\end{proof}
As an immediate corollary we have:
\begin{cor}
There is a nonempty open subset of the moduli space of $\Theta_{g,p}$-polarized K3 surfaces such that $K_{j-1,3}(D,\eta_D-K_D, K_D)=0$ for all $j\leq p$ and $D \in |H|$.
\end{cor}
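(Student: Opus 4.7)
The plan is to reduce the claimed Koszul vanishing to the $H^1$-vanishing already proved in Lemma \ref{hypvanodd}, and then conclude by semicontinuity, using the fact that $\hat{Z}_g$ is a degeneration of generic $\Theta_{g,p}$-polarized $K3$ surfaces. Applying the kernel bundle description
$$K_{p,q}(X,\F,L) \simeq \mathrm{ker}\Bigl\{H^1\bigl(X,\bigwedge^{p+1} M_L \otimes \F \otimes L^{q-1}\bigr) \to \bigwedge^{p+1}H^0(X,L) \otimes H^1\bigl(X,\F \otimes L^{q-1}\bigr)\Bigr\}$$
recalled at the beginning of Section 2, with $X=D$, $\F = \eta_D-K_D$, $L = K_D$, $p=j-1$ and $q=3$, one obtains an inclusion
$$K_{j-1,3}(D,\eta_D-K_D,K_D) \hookrightarrow H^1\Bigl(D,\bigwedge^{j} M_{K_D} \otimes K_D \otimes \eta_D\Bigr).$$
Hence it suffices to exhibit a nonempty Zariski-open subset of the moduli space of $\Theta_{g,p}$-polarized $K3$ surfaces on which this latter $H^1$ vanishes for every $j\leq p$ and a general smooth $D \in |H|$.

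The obvious primitive embedding $\Theta_{g,p}\hookrightarrow \hat{\Theta}_{g,p}$ used above allows one to regard any $\hat{\Theta}_{g,p}$-polarized $K3$ surface as a $\Theta_{g,p}$-polarized one, so $\hat{Z}_g$ is a (non-generic) member of the family of $\Theta_{g,p}$-polarized $K3$ surfaces. Lemma \ref{hypvanodd} furnishes precisely the vanishing $H^1(D,\bigwedge^j M_{K_D} \otimes K_D \otimes \eta_D) = 0$ for all $j\leq p$ and general smooth $D\in |H|$ on $\hat{Z}_g$.

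To globalize, consider the relative universal smooth curve $\pi: \mathcal{D}\to U$ in the linear system $|H|$ over an étale cover $U$ of the moduli stack of $\Theta_{g,p}$-polarized $K3$ surfaces; a general $D \in |H|$ is smooth by Bertini, since $H$ is base point free by Corollary \ref{lattice-cor}. The coherent sheaf $\G := \bigwedge^j M_{K_{\mathcal{D}/U}} \otimes K_{\mathcal{D}/U} \otimes \eta|_{\mathcal{D}}$, built from the relative kernel bundle and the restriction of the $\eta$-class, is $U$-flat, so by the semicontinuity theorem the locus $\{u\in U : H^1(\mathcal{D}_u,\G|_{\mathcal{D}_u})\neq 0\}$ is Zariski-closed in $U$. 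The previous step shows that this closed locus misses a point mapping to $(\hat{Z}_g, D)$ with $D\in|H|$ general, and its complement descends to the required nonempty Zariski-open subset of the $\Theta_{g,p}$-moduli. The main delicacy is verifying that $\hat{Z}_g$ genuinely occurs as a limit of generic $\Theta_{g,p}$-polarized surfaces within such a family, which is standard via the surjectivity of the period map for lattice-polarized $K3$ surfaces and is precisely the bridge between Lemma \ref{hypvanodd} and the generic statement claimed in the corollary.
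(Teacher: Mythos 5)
Your proposal is correct and follows essentially the paper's (implicit) argument: the kernel-bundle description gives the injection $K_{j-1,3}(D,\eta_D-K_D,K_D)\hookrightarrow H^1\bigl(D,\bigwedge^j M_{K_D}\otimes K_D\otimes \eta_D\bigr)$, Lemma \ref{hypvanodd} kills the target on the hyperelliptic specialization $\hat{Z}_g$, and semicontinuity in the $\Theta_{g,p}$-polarized family does the rest. The only cosmetic gap is that semicontinuity gives the vanishing for a \emph{general} smooth $D\in|H|$, whereas the statement asserts it for all $D$; this is restored for free since, by the Lefschetz-type lemma of Section 2, the group is isomorphic to $K_{j-1,3}(Z_g,-L,H)$ and hence independent of the choice of smooth $D$.
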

We may now conclude this section by establishing the G-L Secant Conjecture for general line bundle on general curves of odd genus.

\vskip 3pt
\noindent \emph{Proof of Theorem \ref{k3i}.} We retain the notation. We have established that there exists a K3 surface $Z_g$ with Picard lattice $\Theta_{g,p}$ such that $H$ is big and nef, $K_{j,1}(Z_g,H)=0$ and $K_{j-1,3}(D,-L_D,K_D)=0$ for $j\leq p$ for each smooth curve $D \in |H|$. We have $h^0(Z_g, \eta)=h^1(Z_g, \eta)=0$, $h^1(Z_g, L)=0$. For $q \geq 2$ we compute
\begin{align*}
(qH-L)^2&=(q-1)^2(4p+4)+4(q-1)(p-i)-4 \\
& \geq (q-1)((q-1)(4p+4)-4)-4\ \geq 4p-4.
\end{align*}
Since $p \geq 1$, $H^1(Z_g, qH-L)=0$ for $q \geq 2$, using Corollary \ref{lattice-cor} (in the case $p=1$, use that $qH-L$ is primitive). Thus Proposition \ref{prop1} applies, and for each smooth curve $C\in |L|$ we have $K_{j,2}(C,H_C)=0$ for $j \leq p$. Note finally that $h^0(C,H_C)=h^0(Z_g,H)=2p+4$, that is, $H_C$ is non-special.
\hfill $\Box$

\section{The generic Green--Lazarsfeld Secant Conjecture for curves of even genus}
 Suppose $g=2i$ for $i \geq 2$ and $p\geq i-1$. Let $\Xi_{g,p}$ be the following rank two lattice with ordered basis $\{ H, \eta \}$ and intersection form
\[ \left( \begin{array}{cc}
4p+4 & 2p-2i+1  \\
2p-2i+1 & -4  \end{array} \right).\] We set $L=H-\eta$ and note that $(H \cdot L)=2p+2i+3$ and $(L)^2=2g-2$.
We let $\hat{\Xi}_{g,p}$ be the rank three lattice of signature $(1,2)$ having ordered basis $\{ H, \eta, E \}$ and intersection form
\[ \left( \begin{array}{ccc}
4p+4 & 2p-2i+1 & 2 \\
2p-2i+1 & -4  &0 \\
2 & 0 & 0
\end{array} \right).\] Obviously $\Xi_{g,p}$ can be primitively embedded in $\hat{\Xi}_{g,p}$. By the Torelli theorem \cite{dolgachev}, there exist smooth K3 surfaces $Z_g$ respectively $\hat{Z}_g$ with Picard lattices isomorphic to $\Xi_{g,p}$ respectively $\hat{\Xi}_{g,p}$, and such that $H$ is big and nef.

\begin{lem}
Let $\hat{Z}_g$ be a K3 surface with Picard lattice given by $\hat{\Xi}_{g,p}$ and such that $H$ is big and nef. Then $E$ and $H$ are effective and base point free.
\end{lem}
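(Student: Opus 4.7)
The plan is to verify each property in turn, directly from the intersection data of $\hat{\Xi}_{g,p}$. The off-diagonal entry $2p-2i+1$ is now odd, so unlike the odd-genus case handled by Corollary \ref{lattice-cor} there is no uniform divisibility statement on $\mathrm{Pic}(\hat{Z}_g)$; everything will instead be extracted from a mod $2$ analysis of Diophantine equations in the three lattice coefficients.

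For $H$: since $H$ is big and nef with $H^2 = 4p+4 > 0$, Riemann--Roch gives $\chi(H) = 2p+4$, and nefness of $H$ forces $h^2(H)=h^0(-H)=0$; hence $h^0(H) \geq 2p+4$, so $H$ is effective. For base-point-freeness I apply Proposition \ref{bpf-prop}: it suffices to rule out a smooth elliptic curve $F = aH+b\eta+cE \in \mathrm{Pic}(\hat{Z}_g)$ with $F \cdot H = 1$. The linear constraint $F \cdot H = 1$ reduced mod $2$ forces $b$ odd, after which $F^2 = 0$ reduced mod $2$ forces $a$ even; writing $a = 2a'$ and eliminating $c$ from the two relations collapses everything to the single Diophantine relation
$$a' = 4(p+1)a'^2 + b^2.$$
Since $b$ is odd one has $b^2 \geq 1$, and using $p \geq 1$ the right-hand side exceeds the left-hand side for every integer $a'$, a contradiction.

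For $E$: we have $E^2 = 0$ and $E \cdot H = 2$, so Riemann--Roch gives $\chi(E) = 2$; nefness of $H$ rules out $-E$ being effective (as $(-E) \cdot H = -2 < 0$), so $h^2(E) = 0$ and $h^0(E) \geq 2$, i.e.\ $E$ is effective. For base-point-freeness I first confirm $H$ is ample by ruling out a $(-2)$-class $R$ with $R \cdot H = 0$: the same mod $2$ reduction reduces the two defining equations to $2(p+1)a^2 + 8b'^2 = 1$, which is impossible. I then decompose $E = F + M$ where $F$ is the fixed and $M$ the moving part of $|E|$; since $h^0(E)\ge 2$ we must have $M\neq 0$, and ampleness of $H$ together with $E \cdot H = 2$ then forces $F \cdot H = M \cdot H = 1$ if also $F\neq 0$. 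But the same Diophantine analysis as for $H$ (now with $M^2 \geq 0$ in place of $F^2 = 0$) yields the inequality $a'' \geq 4(p+1)a''^2 + b'^2$, which again has no integer solution. Hence $F=0$, so $|E|$ has no fixed components; by the classical description, due to Saint-Donat, of linear systems with $L^2=0$ on a $K3$ surface, $|E|$ is then base point free and its general member is a smooth elliptic curve.

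The main technical hurdle is the bookkeeping in these mod $2$ Diophantine reductions; everything else reduces to standard $K3$ facts (Riemann--Roch, Mayer's criterion, Saint-Donat).
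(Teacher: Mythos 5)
Your proof is correct, but it takes a genuinely different route from the paper's. The paper first proves that $E$ is nef by a short Hodge-index-style computation: if $R=aH+bE+c\eta$ is a smooth rational curve with $(R\cdot E)=2a<0$, then $(R-aH)^2=(bE+c\eta)^2=-4c^2\le 0$, while expanding gives $(R-aH)^2=-2+a^2(4p+4)-2a(R\cdot H)>0$; since $E$ is primitive with $(E)^2=0$, nefness already yields base point freeness. It then handles $H$ by intersecting a putative elliptic $F$ with the nef class $E$ to get $a\ge 0$ and concluding from $-4c^2=(F-aH)^2=a(a(4p+4)-2)$. You reverse the order: you treat $H$ first, with no information about $E$, by exploiting the odd off-diagonal entry --- $(F\cdot H)=1$ forces $b$ odd, and your eliminated relation $2a=4(p+1)a^2+4b^2$ (which is exactly the identity $(F-aH)^2=-4b^2$ in disguise) then has no integer solution because the right-hand side is at least $4$ and dominates $2a$. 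For $E$ you go through ampleness of $H$ and the fixed/moving decomposition of $|E|$ together with Saint-Donat, rather than checking nefness of $E$ against $(-2)$-curves directly. Both routes are sound; the paper's treatment of $E$ is shorter, while yours makes the $H$ step independent of $E$ and gives the extra fact that $H$ is ample. Two bookkeeping remarks: the parity of $a$ comes from $F^2\equiv 2ab \pmod 4$, not mod $2$ (and you never actually need $a$ even, since $2a=4(p+1)a^2+4b^2$ with $b$ odd is already impossible); likewise in the moving-part step $M^2$ need not be divisible by $4$, so the cleaner statement is the inequality $2a\ge 4(p+1)a^2+4b^2$ with $b$ odd, which suffices as you indicate.
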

\begin{proof}
Both $E$ and $H$ are effective since they have positive intersection with $H$. We claim that $E$ is base point free. As $(E)^2=0$, it suffices to prove that $E$ is nef, \cite[Prop.\ 2.3.10]{huy-lec-k3}. Let $R=aH+bE+c \eta$ be the class of a smooth rational curve, for $a,b,c \in \mathbb{Z}$, and assume for a contradiction that $(R \cdot E)<0$, that is,\ $a <0$. Then $(R-aH)^2=(bE+c\eta)^2=-4c^2 \leq 0$. On the other hand $(R-aH)^2 = -2+a^2(4p+4)-2a(R \cdot H)>0$,
since $a<0$ and $H$ is nef. We have reached a contradiction, thus $E$ is nef.

\vskip 3pt

To conclude that $H$ is base point free, since $H$ is big and nef, it suffices to show that there is no smooth elliptic curve $F$ with $(H \cdot F)=1$ (cf. Proposition \ref{bpf-prop}). Suppose such an $F$  exists and write $F=aH+bE+c \eta$. Since $E$ is the class of an integral elliptic curve, $E$ is nef and $2a=(F \cdot E) \geq 0$. We have
$-4c^2=(F-aH)^2=a(-2+a(4p+4))$, which is only possible if $a=c=0$ and so $F=bE$. Since $(bE \cdot H)=2b$, this is a contradiction.
\end{proof}
\begin{lem} \label{lattice-1}
Let $\hat{Z}_g$ be a K3 surface with $\mathrm{Pic}(\hat{Z}_g)= \hat{\Xi}_{g,p}$, such that $H$ is big and nef. Then no class of the form $aE+b\eta$, for $b \neq 0$, can be effective.
\end{lem}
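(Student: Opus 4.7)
Assume for contradiction that $D=aE+b\eta \in \mathrm{Pic}(\hat{Z}_g)$ is effective with $b\neq 0$. The intersection matrix of $\hat{\Xi}_{g,p}$ gives immediately
\[
D^2=-4b^2<0, \qquad D\cdot E = 0.
\]
My plan is to decompose $D$ into its irreducible components and show that each component's class is a positive integer multiple of $E$; this forces $D\in \mathbb{Z}\cdot E$, which contradicts $b\neq 0$ in the basis $\{H,\eta,E\}$.

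By the previous lemma $E$ is effective and base point free, hence nef. Writing $D=\sum n_i C_i$ as a sum of distinct irreducible curves with $n_i>0$, nefness of $E$ forces each $C_i\cdot E\geq 0$, and then $D\cdot E = 0$ forces $C_i\cdot E = 0$ for every component. Since every irreducible curve on a K3 surface satisfies $C_i^2\geq -2$, I would split into two cases.

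First, if $C_i^2\geq 0$ then $C_i$ has nonnegative intersection both with itself and with every other irreducible curve, so $C_i$ is nef. Hodge index on the signature $(1,2)$ lattice $\hat{\Xi}_{g,p}$ applied to the pair $(C_i,E)$ with $E^2=0$, $E\neq 0$ and $C_i\cdot E = 0$ forces $C_i^2=0$ together with linear dependence of $C_i$ and $E$ in $\mathrm{Pic}(\hat{Z}_g)\otimes \mathbb{Q}$; primitivity of the basis vector $E$ and effectivity of $C_i$ then give $C_i = mE$ for some integer $m\geq 1$. Second, if $C_i^2=-2$, write $C_i = \alpha H+\beta E+\gamma \eta$; the constraint $C_i\cdot E = 2\alpha = 0$ gives $\alpha=0$, after which $C_i^2 = -4\gamma^2 = -2$ has no integer solution. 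So no $(-2)$-curve orthogonal to $E$ can exist in this Picard lattice.

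Consequently every component of $D$ has class a positive multiple of $E$, so $D$ itself lies in $\mathbb{Z}_{\geq 0}\cdot E$, contradicting $b\neq 0$. The only nonroutine step is the Hodge index invocation; the parity/integrality obstruction ruling out $(-2)$-curves orthogonal to $E$ is immediate from the intersection matrix, and everything else is bookkeeping.
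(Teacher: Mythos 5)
Your argument is correct. The decisive computations are the same ones the paper uses: nefness of $E$ (from the preceding lemma) together with $(aE+b\eta)\cdot E=0$ forces every irreducible component of the putative effective divisor to be orthogonal to $E$, and any lattice class orthogonal to $E$ has vanishing $H$-coefficient (since $R\cdot E=2x$), hence has square $-4z^2$, so the lattice contains no $(-2)$-class orthogonal to $E$. Where you differ is in the global structure. You classify \emph{all} components, invoking the Hodge index theorem to show that a component with nonnegative square and zero intersection with $E$ must be an integral multiple of $E$, and then read the contradiction off the coordinates of $aE+b\eta$ in the basis $\{H,\eta,E\}$. The paper short-circuits this: since $(aE+b\eta)^2=-4b^2<0$, some integral component $R$ satisfies $(R\cdot aE+b\eta)<0$, hence $R$ is not nef, hence $(R)^2=-2$; that single $(-2)$-curve orthogonal to $E$ already yields the contradiction, with no need for the Hodge index step or the case $C_i^2\geq 0$. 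Your route is slightly longer but proves a little more (every effective class of the form $aE+b\eta$ lies in $\mathbb{Z}_{\geq 0}\cdot E$); both arguments are complete.
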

\begin{proof}
Suppose $aE+b\eta$ is effective, where $b \neq 0$. Since $(aE+b\eta)^2=-4b^2$, there must be an integral component $R$ of $aE+b\eta$ with $(R \cdot aE+b\eta)<0$. Since $R$ is integral and not nef, we must have $(R)^2=-2$. Since $(aE+b \eta \cdot E)=0$ and $E$ is nef, we have $(R \cdot E)=0$ and thus $R$ is of the form $xE+y \eta$ for $x,y \in \mathbb{Z}$. But then $(R)^2=-4y^2 \neq -2$, which is a contradiction.
\end{proof}
\begin{lem}
Let $\hat{Z}_g$ be as above. Then any class $A:=H+c\eta$ with $c \in \mathbb{Z}$ and satisfying $(A)^2 \geq 0$ is nef.
\end{lem}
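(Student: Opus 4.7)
The plan is to argue by contradiction that $A = H + c\eta$ with $A^2 \geq 0$ is nef, by producing a single clean identity for the intersection of $A$ with any hypothetical $(-2)$-curve violator.

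First I would note that since the Picard lattice $\hat{\Xi}_{g,p}$ is even, $A^2$ and $A \cdot H$ satisfy the numerical relation
\[ A \cdot H \;=\; \tfrac{1}{2}A^2 + 2(p+1) + 2c^2 \;\geq\; 2(p+1) \;>\; 0,\]
so $A$ lies in the closure of the component of the positive cone of $\mathrm{Pic}(\hat{Z}_g)_{\mathbb R}$ containing $H$ (and $E$, by $A \cdot E = 2$). For any integral curve $R$ on $\hat{Z}_g$ with $R^2 \geq 0$, the class $R$ is itself nef (an integral component of the base locus of $|R|$ would have to be $R$ itself, with $R \cdot R \geq 0$), hence also lies in the closure of the $H$-positive component, and so $A \cdot R \geq 0$ by the Hodge index theorem. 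Consequently any hypothetical integral curve $R$ with $A \cdot R < 0$ would have to satisfy $R^2 = -2$.

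Next, writing $R = aH + b\eta + \epsilon E$ with $a,b,\epsilon \in \mathbb Z$, nefness of $E$ (established in the previous lemma) gives $R \cdot E = 2a \geq 0$. If $a = 0$ then $R = b\eta + \epsilon E$, and Lemma \ref{lattice-1} together with the effectivity of $R$ forces $b = 0$, whence $R = \epsilon E$ has $R^2 = 0 \neq -2$, a contradiction. So $a \geq 1$; reducing $R^2 = -2$ modulo $2$ and using that $2p - 2i + 1$ is odd shows that $a$ and $b$ are both odd.

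The heart of the argument is the following identity, obtained by using $R^2 = -2$ to eliminate $\epsilon$ in the computation of $a(R \cdot H) = 2a^2(p+1) + 2b^2 - 1$ and then expanding $a(A \cdot R) = a(R \cdot H) + c \cdot a(R \cdot \eta)$ and completing the square in $b$:
\[ a(A \cdot R) \;=\; 2(b - ac)^2 \;+\; \tfrac{1}{2}a^2 A^2 \;-\; 1. \]
If $b \neq ac$, then $(b - ac)^2 \geq 1$ forces $a(A \cdot R) \geq 1 + \tfrac{1}{2}a^2 A^2 \geq 1 > 0$, contradicting $A \cdot R < 0$. If $b = ac$, then $R = aA + \epsilon E$ and the equation $R^2 = -2$ becomes $a^2 A^2 + 4a\epsilon = -2$, yielding $\epsilon = -(a^2 A^2 + 2)/(4a)$; integrality of $\epsilon$ forces $a \mid 2$, and since $a$ is odd we must have $a = 1$. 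Then $\epsilon = -(A^2 + 2)/4$ forces $A^2 \equiv 2 \pmod 4$, so in particular $A^2 \geq 2$; the resulting $A \cdot R = (A^2 - 2)/2 \geq 0$ is again a contradiction.

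The main obstacle is recognizing the key identity $a(A \cdot R) = 2(b - ac)^2 + \tfrac{1}{2}a^2 A^2 - 1$: once this is in hand, the whole analysis reduces to the trivial dichotomy $b = ac$ versus $b \neq ac$, handled uniformly in all remaining parameters.
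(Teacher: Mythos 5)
Your proof is correct: the key identity $a(A\cdot R) = 2(b-ac)^2 + \tfrac{1}{2}a^2A^2 - 1$ does check out (substitute $2a\epsilon$ from $R^2=-2$ into $a(A\cdot R)=a(R\cdot H)+ca(R\cdot\eta)$ and complete the square in $b$), and both branches of the dichotomy $b=ac$ versus $b\neq ac$ are handled soundly. The route differs from the paper's in one essential respect. The paper first observes that $A$ is effective (since $A^2\geq 0$ and $A\cdot E=2>0$) and that a violating $(-2)$-curve $R$ must then be a \emph{base component} of $A$, so that $A-R$ is again effective; intersecting $A-R$ with the nef class $E$ immediately forces the $H$-coefficient of $R$ to equal $1$, after which Lemma \ref{lattice-1} gives $R=A+yE$ with $y<0$ and the one-line contradiction $-2=R^2=(R\cdot A)+2y\leq -3$. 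You instead treat an arbitrary integral curve $R$ with $A\cdot R<0$, reduce to $R^2=-2$ via the positive-cone/Hodge-index argument rather than via effectivity of $A$, and must therefore control every value $a\geq 1$ of the $H$-coefficient --- which is exactly what your completing-the-square identity accomplishes, with the parity of $a$ (read off from $R^2=-2$ modulo $2$, using that $2p-2i+1$ is odd) finishing the case $b=ac$. The paper's argument is shorter because the base-component structure kills the coefficient $a$ at the outset; yours is more computational but slightly more robust, since it never needs to recognize $R$ as a component of $A$ and would apply verbatim to any class of non-negative square meeting $E$ and $H$ positively.
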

\begin{proof}
Suppose by contradiction that $A$ is not nef. As $(A)^2 \geq 0$ and $(A \cdot E)=2 >0$, the class $A$ is effective. Thus there exists an integral base component $R$ of $A$
with $(R)^2=-2$ and $(R \cdot A) <0$. Write $R=xH+yE+z\eta$ for $x,y,z \in \mathbb{Z}$; as $(R)^2=-2$, we obtain $x \neq 0$. Since $A-R$ is effective ($R$ is
a base component of $A$), intersecting with $E$ gives $x=1$ and $A-R=-yE+(c-z) \eta$. From Lemma \ref{lattice-1} we have $c=z$ and then $y \leq 0$. If $y=0$, then we would have $R=A$ which contradicts that $(A)^2 \geq 0$, so $y<0$. But now $R=H+c\eta+yE=A+yE$, so we write
$$-2=(R)^2=(R \cdot A) +y(E \cdot R) =(R \cdot A)+2y  \leq -3,
$$
for $(R \cdot A)<0$ and $y <0$. This is a contradiction.
\end{proof}
\begin{cor} \label{h1-cor}
Let $\hat{Z}_g$ be as above, and set $L=H-\eta$. Then $H^1(\hat{Z}_g, qH-L)=0$ for $q \geq 0$.
\end{cor}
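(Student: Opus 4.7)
The plan is to rewrite $qH - L = (q-1)H + \eta$ and handle three ranges of $q$ separately, since the positivity properties of this class differ dramatically. For $q = 0$, Serre duality on the K3 surface $\hat{Z}_g$ identifies $H^1(-L)$ with $H^1(L)^{\vee}$, so it suffices to verify $H^1(L) = 0$. Writing $L = H + (-1)\eta$ and computing $(L)^2 = (4p+4) - 2(2p-2i+1) - 4 = 4i - 2 > 0$, the preceding lemma immediately gives that $L$ is nef; hence $L$ is big and nef and the standard vanishing on K3 surfaces (Kawamata--Viehweg applied to the trivial canonical bundle) yields $H^1(L) = 0$.

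For $q \geq 2$, the class $qH - L = (q-1)H + \eta$ is not directly of the form $H + c\eta$ covered by the preceding lemma, so I would split it as $(q-2)H + (H+\eta)$. The first summand is nef since $H$ is nef and $q - 2 \geq 0$. For the second, computing $(H+\eta)^2 = 8p - 4i + 2$ and using the hypotheses $p \geq i - 1$ and $i \geq 2$ shows this is non-negative, so the preceding lemma applies to give $H + \eta$ nef. Summing, $qH - L$ is nef; combined with positive intersection $(qH - L) \cdot H > 0$, it is big and nef, and the K3 vanishing again delivers $H^1(qH - L) = 0$.

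The main obstacle is the case $q = 1$, where $qH - L = \eta$ has self-intersection $-4$ and none of the standard vanishing theorems apply. Here I would proceed via Riemann--Roch on $\hat{Z}_g$, which gives $\chi(\eta) = 2 + (\eta)^2/2 = 0$; combined with Serre duality $h^2(\eta) = h^0(-\eta)$, this reduces the problem to showing that neither $\eta$ nor $-\eta$ is effective. This last statement is precisely the content of Lemma \ref{lattice-1} applied to the classes $0 \cdot E + (\pm 1)\eta$, which are of the form $aE + b\eta$ with $b = \pm 1 \neq 0$. Hence $h^1(\eta) = h^0(\eta) + h^0(-\eta) = 0$, completing this remaining case.
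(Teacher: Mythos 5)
Your proof is correct and follows essentially the same route as the paper: the same three-way case split, the same appeal to the preceding nefness lemma for $L$ and for $H+\eta$, and the same use of Lemma \ref{lattice-1} together with Riemann--Roch and Serre duality to handle $q=1$. One small imprecision: for $q\geq 2$, nefness of $qH-L$ together with $(qH-L)\cdot H>0$ does not by itself imply bigness; the correct (and immediate) justification is $(qH-L)^2=\bigl((q-2)H+(H+\eta)\bigr)^2\geq (H+\eta)^2>0$, since both summands are nef.
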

\begin{proof}
For $q=0$, note that $(L)^2=2g-2 >0$, so $L$ is big and nef by the previous lemma and $H^1(\hat{Z}_g, -L)=0$. For $q=1$, note that $(\eta)^2=-4$ and that neither $\eta$ nor $-\eta$ are effective by Lemma \ref{lattice-1}. Thus $H^1(\hat{Z}_g, \eta)=0$. For the remaining cases, it suffices to show
$2H-L=H+\eta$ is big and nef, since $H$ is big and nef. We have $(H+\eta)^2=4p+2(2(p-i)+1)>0$.
\end{proof}
We have seen that the line bundle $L$ is big and nef. We now show that it is base point free.
\begin{lem}
For $\hat{Z}_g$ as above, the class $L=H-\eta$ is base point free.
\end{lem}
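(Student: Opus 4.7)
The plan is to apply Mayer's criterion (Proposition \ref{bpf-prop}). As already observed in the proof of Corollary \ref{h1-cor}, the class $L = H - \eta$ has $(L)^{2} = 2g-2 > 0$, and it is nef by the preceding lemma applied with $c = -1$; hence $L$ is big and nef. It therefore suffices to rule out the existence of a smooth elliptic curve $F \subset \hat{Z}_{g}$ with $(F \cdot L) = 1$.

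Suppose for contradiction such an $F$ exists, and write $F = aH + bE + c\eta$ with $a,b,c \in \mathbb{Z}$. Since $F$ is integral and $E$ is nef (first lemma of this section), $(F \cdot E) = 2a \geq 0$, so $a \geq 0$. If $a = 0$, then $F^{2} = -4c^{2} = 0$ forces $c = 0$, hence $F = bE$ and $(F \cdot L) = 2b$, which cannot equal $1$ for any $b \in \mathbb{Z}$, a contradiction.

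The substantive case is $a \geq 1$. Here I would exploit the linear equation $(F \cdot L) = 1$, which reads
$$a(2p+2i+3) + 2b + c(2p-2i+5) = 1,$$
to solve for $2b$, and substitute into the equation $F^{2} = 0$, i.e.\
$$2a^{2}(p+1) - 2c^{2} + 2ab + ac(2p-2i+1) = 0,$$
thereby eliminating $b$. The key algebraic point is that all dependence on $p$ then cancels, leaving a purely $i$-dependent quadratic in $c$:
$$2c^{2} + 4ac + a^{2}(2i+1) - a = 0,$$
with discriminant $8a\bigl(a(1-2i)+1\bigr)$. For $a \geq 1$ and $i \geq 2$, one has $a(1-2i)+1 \leq 1 - 3a \leq -2 < 0$, so the discriminant is strictly negative and no real (hence no integer) $c$ exists, yielding the desired contradiction.

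The only point requiring vigilance is the bookkeeping in the elimination step: one must keep in mind that $H \cdot \eta = 2p-2i+1$ is the unique odd entry of the Gram matrix of $\hat{\Xi}_{g,p}$, which is what makes the parities work out so that $b$ remains integral along the way. Beyond this, the argument is mechanical, and the nefness of both $E$ (used to force $a \geq 0$) and of $L$ (needed to invoke Proposition \ref{bpf-prop}) has already been established.
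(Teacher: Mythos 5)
Your proof is correct and follows essentially the same route as the paper: reduce via Mayer's criterion (Proposition \ref{bpf-prop}) to excluding a smooth elliptic curve $F$ with $(F\cdot L)=1$, use nefness of $E$ to force $a\geq 0$, dispose of $a=0$ by parity, and then derive a lattice contradiction from $F^2=0$ for $a\geq 1$. The paper packages the final elimination more compactly by computing $(F-aL)^2$ in two ways, namely $(bE+(c+a)\eta)^2=-4(c+a)^2\leq 0$ versus $F^2-2a(F\cdot L)+a^2L^2=a\bigl(a(2g-2)-2\bigr)>0$, but this is the same computation as your explicit elimination of $b$ and negative-discriminant argument.
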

\begin{proof}
As $L$ is big and nef, it suffices to show that there is no smooth elliptic curve $F$ with $(L \cdot F)=1$. Suppose such an $F$ were to
exist, and write $F=aH+bE+c\eta$. As $(F)^2=0$, we obtain that $a \neq 0$. As $(F \cdot E) \geq 0$, we find $a >0$. We calculate
$$
-4(c+a)^2=(F-a(H-\eta))^2
=a^2(2g-2)-2a =a(a(2g-2)-2).
$$
As $a>0$ and $2g-4>0$, this is a contradiction.
\end{proof}

\begin{lem}
For $\hat{Z}_g$ as above, the class $B:=H-(2p+2-j)E-\eta$ is not effective for $j\leq p$.
\end{lem}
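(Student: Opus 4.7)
The plan is to argue by contradiction. Assume $B = H - (2p+2-j)E - \eta$ is effective for some $j \leq p$, set $k := 2p+2-j \geq p+2$, and record the two key numerical invariants
\[ B \cdot E = 2 \quad \text{and} \quad B^2 = 4(i+j) - 8p - 10. \]
Using $j \leq p$ together with the hypothesis $p \geq i-1$ (i.e.\ $i \leq p+1$), one gets $i + j \leq 2p + 1$, and hence $B^2 \leq -6$. In particular $B^2 < -2$, so $B$ itself is not the class of an integral curve.

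The heart of the argument is to exploit the elliptic fibration given by the base-point-free pencil $|E|$. First I would show that every integral curve $C$ on $\hat{Z}_g$ with $C \cdot E = 0$ lies in the class $E$. Such a $C$ is contained in a fiber of $|E|$, and any reducible fiber would contain a $(-2)$-curve $\alpha$ orthogonal to $E$; but in $\hat{\Xi}_{g,p}$ every class $\alpha$ with $\alpha \cdot E = 0$ has vanishing $H$-coefficient, hence $\alpha = b\eta + cE$ with $\alpha^2 = -4b^2$, which can never equal $-2$. This rules out reducible fibers, while multiple fibers are excluded by base-point-freeness of $E$.

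Next I would write an effective divisor in the class $B$ as $mE + D$, where $m \geq 0$ and $D$ is effective with no component in $|E|$, so that $[D] = B - mE$ and $D \cdot E = 2$. Every integral component $C$ of $D$ satisfies $C \cdot E \geq 1$ by the previous paragraph, and the lattice computation $C \cdot E = 2\, a^H_C$ (with $a^H_C$ the $H$-coefficient of $C$) forces $C \cdot E$ to be an even positive integer, hence $\geq 2$. Combined with $D \cdot E = 2$, this forces $D$ to be a single integral curve $C$ of multiplicity one with $C \cdot E = 2$; so $a^H_C = 1$ and $C$ has class exactly $H - (k+m)E - \eta$.

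The final contradiction is then immediate from integrality of $C$: one has $C^2 \geq -2$, yet
\[ C^2 = (B - mE)^2 = B^2 - 4m \leq B^2 \leq -6. \]
The main obstacle, I anticipate, is the irreducibility of the fibers of $|E|$, which rests entirely on the absence of $(-2)$-classes orthogonal to $E$ in $\hat{\Xi}_{g,p}$; once this lattice fact is in hand, the rest is routine intersection-theoretic bookkeeping.
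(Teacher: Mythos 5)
Your proof is correct and is essentially the same argument as the paper's: both isolate, inside a putative effective representative of $B$, an integral curve of class $H+bE-\eta$ with $b\le -(p+2)$, and contradict $C^2\ge -2$ since any such class has square $4(b+i)-2\le 4(i-p-2)-2\le -6$. The only difference is packaging: the paper extracts this curve as a base component $R$ with $(R\cdot B)<0$ and pins down its class using the lemma that no class $xE+y\eta$ with $y\ne 0$ is effective, whereas you extract it as the unique $E$-positive component of the decomposition $B=mE+D$, using the parity of intersection with $E$ and the irreducibility of the fibers of $|E|$.
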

\begin{proof}
We calculate $(B)^2= 4(i+j)-8p-10 \leq -6$. If $B$ is effective, then there exists a base component $R$ of $B$ with $(R \cdot B)<0$ and $(R)^2=-2$. Write
$R=aH+bE+c \eta$ for $a,b,c \in \mathbb{Z}$. Since $R$ is effective, $(R \cdot E) \geq 0$, so $a \geq 0$. We have $a \neq 0$ because $(R)^2=-2$. Since $B-R$ is effective, $(B-R \cdot E) \geq 0$ which forces $a=1$. Then $B-R=-(2p+2-j+b)E-(1+c)\eta$. Applying Lemma \ref{lattice-1}, we see $c=-1$, and we have $2p+2-j+b \leq 0$, so $b \leq -p-2$. But then $R=H+bE-\eta$ and one calculates
$$
 (R)^2=4(b+i)-2  \leq 4(i-p-2) -2 \leq -6,
$$
 which is a contradiction.
\end{proof}
\begin{cor} \label{kernel-bundle-cor}
Let $\hat{Z}_g$ be as above and let $D \in |H|$ be an integral, smooth curve. Then $H^1\Bigl(D, \bigwedge^j M_{K_D}(K_D+\eta_D)\Bigr)=0$ for $j \leq p$.
\end{cor}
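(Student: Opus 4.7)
My plan is to follow the blueprint of Lemma \ref{hypvanodd} (the analogous odd genus statement), adapted to the lattice $\hat{\Xi}_{g,p}$. Since $E$ is base point free with $(E)^2=0$ and $(E\cdot H)=2$, the restriction $E_D$ is a base point free pencil of degree $2$ on $D$, so $D$ is hyperelliptic. By adjunction $g(D)=H^2/2+1=2p+3$, and $K_D\cong (2p+2)E_D$. On such a curve the canonical kernel bundle is pulled back via $\phi_{E_D}\colon D\to \PP^1$ from $M_{\OO_{\PP^1}(2p+2)}\cong \OO_{\PP^1}(-1)^{\oplus(2p+2)}$, giving the splitting $M_{K_D}\cong \OO_D(-E_D)^{\oplus(2p+2)}$. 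Consequently
$$\bigwedge^j M_{K_D}\otimes (K_D+\eta_D) \;\cong\; \OO_D\bigl((2p+2-j)E_D+\eta_D\bigr)^{\oplus\binom{2p+2}{j}},$$
and the problem reduces to the single vanishing $H^1\bigl(D,\,(2p+2-j)E_D+\eta_D\bigr)=0$ for $j\le p$.

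The next step is to twist the restriction sequence $0\to \OO_{\hat{Z}_g}(-H)\to \OO_{\hat{Z}_g}\to \OO_D\to 0$ by the class $\cN:=(2p+2-j)E+\eta$ on $\hat{Z}_g$ and pass to cohomology. The resulting long exact sequence sandwiches $H^1(D,\cN_D)$ between $H^1(\hat{Z}_g,\cN)$ and $H^2(\hat{Z}_g,\cN-H)$, so it suffices to establish that both of these vanish.

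For $H^1(\hat{Z}_g,\cN)$: one computes $\cN^2=-4$, so $\chi(\hat{Z}_g,\cN)=0$ by Riemann--Roch on the K3. Both $\cN$ and $-\cN$ are of the form $aE+b\eta$ with $b=\pm1\neq 0$, so neither is effective by Lemma \ref{lattice-1}; hence $h^0(\cN)=h^2(\cN)=0$ and therefore $h^1(\cN)=0$. For $H^2(\hat{Z}_g,\cN-H)$: Serre duality identifies this with $h^0(\hat{Z}_g, H-(2p+2-j)E-\eta)$, which is precisely the class $B$ shown to be non-effective for $j\le p$ in the lemma immediately preceding this corollary. No step presents a serious obstacle, since all the lattice-theoretic groundwork has already been laid in this section; the only input outside of the prepared lemmas is the splitting of $M_{K_D}$ on the hyperelliptic curve $D$, which comes directly from the standard formula $M_{\OO_{\PP^1}(n)}\cong \OO_{\PP^1}(-1)^{\oplus n}$.
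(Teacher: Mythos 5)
Your proposal is correct and is exactly the argument the paper intends: its proof of this corollary simply says it is essentially identical to Lemma \ref{hypvanodd}, and you have carried out that adaptation faithfully, reducing via the hyperelliptic splitting of $M_{K_D}$ to the vanishing of $H^1(D,(2p+2-j)E_D+\eta_D)$ and then invoking Lemma \ref{lattice-1} for $H^1(\hat{Z}_g,(2p+2-j)E+\eta)=0$ and the preceding non-effectivity lemma for the class $B$ to kill $H^2(\hat{Z}_g,(2p+2-j)E+\eta-H)$. The only substantive adjustment from the odd-genus case --- that one can no longer argue via squares being negative because $\hat{\Xi}_{g,p}$ admits $(-2)$-classes, so the dedicated lemma on $B$ is needed --- is precisely what you used.
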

\begin{proof} Essentially identical to that of Lemma \ref{hypvanodd}.
\end{proof}

\begin{thm}\label{evenkoszul}
Let $\hat{Z}_g$ be as above and $Z_g$ be a generic $\Xi_{g,p}$-polarized K3 surface, which is deformation equivalent to $\hat{Z_g}$. Let $C \in |L|$ be a smooth, integral curve, where $L=H-\eta$. Then $K_{j,2}(C,H_C)=0 \text{ for $j \leq p$.}$
\end{thm}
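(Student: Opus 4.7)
The plan is to apply Proposition \ref{prop1} with $X = Z_g$, using $D \in |H|$ as the auxiliary curve and $C \in |L|$ as the target. The hypotheses of that proposition have essentially all been verified on $\hat{Z}_g$, and they transfer to a general $\Xi_{g,p}$-polarized deformation $Z_g$ by upper semi-continuity: $H$ is big, nef and base point free; $(H \cdot L) = 2p + 2i + 3 > 0$; $H^0(Z_g, \eta) = H^0(Z_g, H - L) = 0$ follows from Lemma \ref{lattice-1} applied to $\pm \eta$; and $H^1(Z_g, qH - L) = 0$ for every $q \geq 0$ is Corollary \ref{h1-cor}. It thus suffices to establish, for each $j \leq p$, the two separate vanishings
\begin{equation*}
K_{j,2}(Z_g, H) = 0 \qquad \text{and} \qquad K_{j-1,3}(D, -L_D, K_D) = 0
\end{equation*}
for a smooth curve $D \in |H|$ on a general $\Xi_{g,p}$-polarized K3 surface.

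The second vanishing is essentially already in hand. By adjunction $H_D = K_D$, so $L_D = K_D - \eta_D$; the kernel bundle description of Koszul cohomology then identifies $K_{j-1,3}(D, -L_D, K_D)$ with a subspace of $H^1(D, \bigwedge^j M_{K_D}(K_D + \eta_D))$, whose vanishing is precisely the content of Corollary \ref{kernel-bundle-cor}. Upper semi-continuity propagates this vanishing from the hyperelliptic curve $D \in |H|$ on $\hat{Z}_g$ (where the kernel bundle splits and the computation is elementary) to a general smooth $D \in |H|$ on a deformation equivalent $Z_g$.

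For the first vanishing, the plan is to apply the Lefschetz hyperplane principle (whose hypotheses hold since $H$ is base point free) to obtain an isomorphism $K_{j,2}(Z_g, H) \simeq K_{j,2}(D, K_D)$ for smooth $D \in |H|$. Now $D$ has arithmetic genus $2p + 3$, and the Aprodu--Farkas proof of Green's conjecture for $K3$ sections gives $K_{j,2}(D, K_D) = 0$ whenever $j < \text{Cliff}(D)$. It therefore suffices to prove $\text{Cliff}(D) = p + 1$, equivalently that $D$ is Brill--Noether--Petri general on a generic $\Xi_{g,p}$-polarized $Z_g$. Following the Lazarsfeld-style argument used in the odd genus case, this reduces to ruling out every non-trivial decomposition $H = A_1 + A_2$ with $h^0(Z_g, A_i) \geq 2$ for $i = 1, 2$.

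The main obstacle is precisely this last step; the odd entry $2p - 2i + 1$ in the intersection form of $\hat{\Xi}_{g,p}$ destroys the uniform divisibility that made the analogous argument for $\hat{\Theta}_{g,p}$ trivial via Corollary \ref{lattice-cor}, and no direct analogue of that corollary is available here. Instead, I would deform such a decomposition to $\hat{Z}_g$, where the $A_i$ arrive without an $E$-component and so can be written $A_i = a_i H + b_i \eta$. Intersecting with the nef class $E$ gives $2 a_i = (A_i \cdot E) \geq 0$ by effectivity, and together with $a_1 + a_2 = 1$ this forces (without loss of generality) $a_1 = 0$, whence $A_1 = b_1 \eta$. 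Effectivity of $A_1$ and Lemma \ref{lattice-1} then force $b_1 = 0$, so $A_1 = 0$, contradicting $h^0(A_1) \geq 2$. Combining the two vanishings through Proposition \ref{prop1} yields $K_{j,2}(C, H_C) = 0$ for all $j \leq p$ and completes the proof.
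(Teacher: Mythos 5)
Your proposal is correct and follows essentially the same route as the paper's proof: reduce via Proposition \ref{prop1} to the two vanishings $K_{j,2}(Z_g,H)=0$ and $K_{j-1,3}(D,-L_D,K_D)=0$, obtain the latter by semicontinuity from Corollary \ref{kernel-bundle-cor} on the hyperelliptic degeneration $\hat{Z}_g$, and obtain the former from Green's Conjecture for $K3$ sections after ruling out decompositions $H=A_1+A_2$ by deforming to $\hat{Z}_g$ and intersecting with the nef class $E$ together with Lemma \ref{lattice-1}. The one step where you differ is purely expository (noting why Corollary \ref{lattice-cor} has no even-genus analogue); the substance of the argument matches the paper's.
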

\begin{proof}
On the surface $Z_g$, we choose a general divisor $D' \in |H|$.
By semicontinuity and Corollary \ref{kernel-bundle-cor}, we have $H^1\Bigl(D', \bigwedge^j M_{K_{D'}}(K_{D'}+\eta)\Bigr)=0$, for $j \leq p$. Thus $K_{j-1,3}(Z_g,-L,H)=0$ for $j \leq p$. We further have $h^1(Z_g, qH-L)=0$ for $q \geq 0$ from Corollary \ref{h1-cor} and semicontinuity, as well as $h^0(Z_g, H-L)=0$ from Lemma \ref{lattice-1}. Thus Proposition \ref{prop1} applies, and it suffices to show $K_{j,2}(Z_g,H)=0$ for $j \leq p$. For this it suffices to show $\text{Cliff}(D') \geq p+1$ by \cite[Thm.\ 1.3]{aprodu-farkas-green}.  To establish this, it suffices in turn to show there is no decomposition $H=A_1+A_2$ for divisors $A_i$ on $Z_g$ with $h^0(Z_g, A_i) \geq 2$ for $i=1,2$.

Suppose $H=A_1+A_2$ is such a decomposition and write $A_i=a_iH+b_i\eta$ for $a_i, b_i \in \mathbb{Z}$, when $i=1,2$. By semicontinuity, $A_i$ must deform to effective divisors on $\hat{Z}_g$, and then intersecting with $E$ shows that $a_i \geq 0$ for $i=1,2$. Since $a_1+a_2=1$, we have either $a_1=0$ or $a_2=0$. We assume $a_1=0$, so $A_1=b_i \eta$ is effective. By semicontinuity and Lemma \ref{lattice-1}, we see $b_i=0$, so $A_1$ is trivial and the claim holds.
\end{proof}
We thus derive the main result of this section:
\begin{thm}\label{paros}
Let $C$ be a general curve of genus $g=2i\geq 4$ and $H_C \in \mathrm{Pic}(C)$ be a general line bundle of degree $2p+2i+3,$ where $p+1\geq i$. Then $$K_{j,2}(C,H_C)=0 \text{ for $j \leq p$.}$$
\end{thm}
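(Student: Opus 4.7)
The plan is to deduce Theorem \ref{paros} from Theorem \ref{evenkoszul} by a standard upper-semicontinuity argument on the universal degree $d=2p+2i+3$ Picard variety $\pi : \mathfrak{Pic}_g^d \to \mathcal{M}_g$.

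I would first invoke Theorem \ref{evenkoszul} to produce an explicit point $(C_0, H_{C_0}) \in \mathfrak{Pic}_g^d$ at which the conclusion already holds. Take a generic $\Xi_{g,p}$-polarized K3 surface $Z_g$ deformation equivalent to $\hat{Z}_g$ as in the previous theorem, let $C_0 \in |L|$ be a smooth integral curve, and set $H_{C_0} := H|_{C_0}$. Its degree is $(H \cdot L) = 2p+2i+3 = d$ and, because $p+1 \geq i$ forces $d > 2g-2 = 4i-2$, the line bundle $H_{C_0}$ is non-special. Theorem \ref{evenkoszul} asserts $K_{j,2}(C_0, H_{C_0}) = 0$ for all $j \leq p$.

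I would then use the kernel-bundle description $K_{j,2}(C,L) \cong H^1(C, \bigwedge^{j+1} M_L \otimes L)$ recorded at the start of Section 2, valid whenever $L$ is non-special and globally generated, to see that each function $\dim K_{j,2}$ is upper-semicontinuous on the open locus $\mathfrak{Pic}_g^{d,\mathrm{ns}} \subset \mathfrak{Pic}_g^d$ of non-special line bundles. Hence
\[ U := \bigl\{(C,L) \in \mathfrak{Pic}_g^{d,\mathrm{ns}} : K_{j,2}(C,L)=0 \text{ for all } j \leq p \bigr\} \]
is open in $\mathfrak{Pic}_g^d$, and by the previous paragraph it is nonempty. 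Since $\pi$ is smooth with irreducible fibers $\mathrm{Pic}^d(C)$ and $\mathcal{M}_g$ is irreducible, $\pi(U)$ is a dense open subset of $\mathcal{M}_g$, and for each $[C] \in \pi(U)$ the fiber intersection $U \cap \pi^{-1}([C])$ is a dense open subset of $\mathrm{Pic}^d(C)$. This is exactly the content of Theorem \ref{paros}. No substantive obstacle is expected: essentially all nontrivial work has been carried out in establishing Theorem \ref{evenkoszul}, and the remainder is a routine spreading-out argument made possible by the irreducibility of $\mathcal{M}_g$ and the smoothness of $\pi$.
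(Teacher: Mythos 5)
Your proposal is correct and follows essentially the same route as the paper: the paper's proof of this theorem is exactly to set $H_C:=H_{|C}$ on the $K3$ surface of Theorem \ref{evenkoszul}, check that $H_C$ is non-special (the paper does this by computing $h^0(C,H_C)=h^0(Z_g,H)=2p+4$, while you observe $\deg H_C>2g-2$, which is equally valid since in fact $d\geq 2g+1$ here), and then conclude by the implicit semicontinuity/spreading-out argument that you spell out explicitly.
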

\begin{proof}
This follows from the above by setting $H_C:=H_{|C}$. Note that we have $h^0(C,H_C)=h^0(Z_g,H)=2p+4$ (as $h^0(Z_g,\eta)=h^1(Z_g, \eta)=0$ by deforming to $\hat{Z}_g$), which is the expected number of sections for a line bundle of degree $2p+2i+3$.
\end{proof}

\section{The Prym--Green conjecture for curves of odd genus}

In this section we prove the odd genus case of the Prym-Green Conjecture formulated in \cite{chiodo-eisenbud-farkas-schreyer}.
We start by recalling a few things about polarized Nikulin
surfaces.

The \emph{Nikulin lattice} $\mathfrak{N}$ is the even lattice of rank $8$ generated by elements $N_1, \ldots, N_8$ and $\mathfrak{e}:=\frac{1}{2}\sum_{j=1}^8 N_j $, where $N_j^2=-2$ for $j=1, \ldots, 8$ and $(N_{\ell}\cdot N_j)=0$ for $\ell \neq j$. For an integer $g\geq 2$, following  \cite[Definition \ 2.1]{gar-sarti-even} and \cite[Definition 1.2]{farkas-verra-moduli-theta}, we define a \emph{Nikulin surface of the first kind} to be a K3 surface $X$ together with a  primitive embedding $\mathbb{Z}\cdot L \oplus \mathfrak{N} \hookrightarrow \mathrm{Pic}(X)$ such that $L$ is a big and nef class with $(L)^2 =2g-2$.

\vskip 3pt

Such surfaces can be realised as the desingularisation of the quotient of a smooth K3 surface by a symplectic involution, see \cite[Remark p.\ 9]{gar-sarti-even} (note however that not all such quotients are Nikulin surfaces of the first kind when $g$ is odd). By definition, a Nikulin surface $X$ contains an even set of eight disjoint smooth rational curves $N_1, \ldots, N_8$ which generate $\mathfrak{N}$ over $\frac{1}{2}\mathbb{Z}$ (but not over $\mathbb{Z}$).
Nikulin surfaces of the first kind form an irreducible $11$-dimensional moduli space $\mathcal{F}_g^{\mathfrak{N}}$. We refer to \cite{dolgachev} and \cite{vGS} for the construction of $\mathcal{F}_g^{\mathfrak{N}}$ via period domains and to \cite{farkas-verra-moduli-theta} for a description of its birational geometry for small genus.

A general element $\bigl[X, \mathbb Z\cdot L \oplus \mathfrak{N}\hookrightarrow \mbox{Pic}(X)\bigr]\in \mathcal{F}_g^{\mathfrak{N}}$ corresponds to a surface having Picard lattice equal to $\Lambda_{g}:=\mathbb{Z}\cdot L \oplus \mathfrak{N}$. Set  $H=L-\mathfrak{e}\in \mbox{Pic}(X)$, where $2\mathfrak{e}=N_1+ \ldots+ N_8$. For a smooth curve $C \in |L|$, since $\mathfrak{e}_C^{\otimes 2}=0$, the pair $[C, \mathfrak{e}_C]$ is an element of the Prym moduli space $\cR_g$ and
$$\phi_{H_C}:C\rightarrow \PP^{g-2}$$ is a Prym-canonical curve of genus $g$.

\vskip 3pt

Suppose now that $g=2i+3$ is an odd genus. The Prym-Green Conjecture (\ref{pgparatlan})
predicts that
$$ K_{i,1}(C,H_C)=0 \ \mbox{ and } \ K_{i-2,2}(C,H_C)=0.$$
This is equivalent to determining the shape of the resolution the Prym-canonical curve $\phi_{H_C}(C)$.

\begin{lem} \label{pg-odd}
Let $g=2i+3$ for $i \geq 2$. There exists a Nikulin surface $X_{g}$ with $\mathrm{Pic}(X_{g}) \simeq  \Lambda_{g}$ such that $L$ is base point free, $H$ is very ample and the general smooth curve $D \in |H|$ is Brill-Noether general.
\end{lem}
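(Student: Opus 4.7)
The plan is to establish the four required properties of $X_g$ in sequence, with the first three following from lattice-theoretic computations on $\Lambda_g$ and the fourth requiring a Lazarsfeld--Mukai bundle argument.

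\emph{Existence and base point freeness of $L$.} The existence of $X_g$ with $\mathrm{Pic}(X_g) \simeq \Lambda_g$ and $L$ big and nef follows from the surjectivity of the period map for $\Lambda_g$-polarized K3 surfaces (\cite{dolgachev}, \cite{vGS}): at a very general point of the $11$-dimensional moduli space $\mathcal{F}_g^{\mathfrak{N}}$, the Picard lattice is exactly $\Lambda_g$. Since $\Lambda_g = \mathbb{Z} L \oplus \mathfrak{N}$ is an orthogonal direct sum, any elliptic class $F = aL + \xi$ with $\xi \in \mathfrak{N}$ satisfies $(F \cdot L) = a(2g-2) \neq 1$, so Proposition \ref{bpf-prop} yields the base point freeness of $L$.

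\emph{Very ampleness of $H$.} One has $H^2 = 2g-6 > 0$ and $(N_j \cdot H) = -(N_j \cdot \mathfrak{e}) = 1 > 0$; together with a Cauchy--Schwarz estimate on the negative-definite lattice $\mathfrak{N}$ (using $\mathfrak{e}^2 = -4$), this shows that $H$ pairs non-negatively with every class of an integral curve, so $H$ is nef. Applying Proposition \ref{bpf-prop} once more: an elliptic $F = aL + \xi$ with $(F \cdot H) = 1$ would force $\xi^2 = -a^2(2g-2)$ and $a(2g-2) - (\xi \cdot \mathfrak{e}) = 1$, which the Cauchy--Schwarz bound $|\xi \cdot \mathfrak{e}| \leq 2|a|\sqrt{2g-2}$ prohibits when $a \neq 0$ and $g \geq 7$. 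Saint-Donat's criterion then upgrades base-point-freeness to very-ampleness once we further exclude elliptic $F$ with $(F \cdot H) = 2$ (handled by the identical Cauchy--Schwarz argument) and a hyperelliptic decomposition $H \sim 2B$ (impossible since $L$ is primitive in $\Lambda_g$).

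\emph{Brill--Noether genericity of $D \in |H|$.} Following the Lazarsfeld--Mukai bundle argument in the proof of \cite[Lemma\ 1.3]{lazarsfeld-bnp}, it suffices to show that $H$ admits no decomposition $H = A_1 + A_2$ with both $A_i$ effective and $h^0(X_g, A_i) \geq 2$. Writing $A_i = a_i L + \xi_i$ with $\xi_i \in \mathfrak{N}$, the nefness of $L$ gives $a_i \geq 0$, and $a_1 + a_2 = 1$ forces $\{a_1, a_2\} = \{0, 1\}$; the summand with $a_i = 0$ is thus an effective class $A_1 \in \mathfrak{N}$. The main obstacle is the verification that such $A_1$ must in fact be a non-negative integer combination $\sum_j m_j N_j$ of the eight rigid $(-2)$-curves, a claim equivalent to $\mathfrak{e}$ not being effective on a very general Nikulin surface with Picard lattice $\Lambda_g$. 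Once this is secured, the disjointness of the $N_j$ immediately gives $h^0(X_g, A_1) = 1$, the desired contradiction.
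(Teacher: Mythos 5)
Your overall architecture matches the paper's: reduce Brill--Noether generality of $D\in|H|$, via Lazarsfeld's argument, to the non-existence of a decomposition $H=A_1+A_2$ with $h^0(X_g,A_i)\geq 2$ for $i=1,2$, and reduce that to showing every effective class in $\mathfrak{N}=L^{\perp}$ is a non-negative integral combination of $N_1,\ldots,N_8$. For the first two properties you argue directly (Mayer's criterion for $L$, a nefness computation plus Saint-Donat for $H$) where the paper simply cites Garbagnati--Sarti \cite{gar-sarti-even}; your Cauchy--Schwarz estimates are correct and in fact give $(R\cdot H)>0$ for every $(-2)$-class, which also disposes of the contracted-curve clause of Saint-Donat's criterion that you did not list explicitly.

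The genuine gap is exactly the step you yourself flag as ``the main obstacle'': you never prove that an effective $A_1\in\mathfrak{N}$ must be of the form $\sum_j m_jN_j$ with $m_j\geq 0$, and this is the one point in the lemma requiring more than bookkeeping. The paper closes it geometrically: letting $\pi:X_g\to\bar Y$ be the contraction of the eight $(-2)$-curves, the class $\pi_*L$ is ample on $\bar Y$ and generates $\mathrm{Pic}(\bar Y)$, so $A_1\perp L$ forces $\pi(A_1)$ to be a finite set of points, hence $A_1$ is supported on the exceptional locus. Alternatively you can close it purely lattice-theoretically, with no appeal to whether $\mathfrak{e}$ is effective: each integral component $R$ of $A_1$ satisfies $(R\cdot L)=0$, because $L$ is nef and $(A_1\cdot L)=0$; hence $R$ lies in the negative definite lattice $L^{\perp}=\mathfrak{N}$ and so $(R)^2=-2$; writing $R=\sum_j c_jN_j$ with the $c_j$ either all in $\mathbb{Z}$ or all in $\tfrac12+\mathbb{Z}$, the condition $\sum_j c_j^2=1$ rules out the half-integral case (which forces $\sum_j c_j^2\geq 2$) and leaves $R=\pm N_\ell$, whence $R=N_\ell$ by effectivity. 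Either argument completes your proof; as written, it is incomplete.
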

\begin{proof}
Consider a general Nikulin surface $X_g$ of the first kind and let $\pi:X_g \to \bar{Y}$ be the map which contracts all the exceptional curves $N_j$. The base point freeness of $L$ is a consequence of \cite[Proposition 3.1]{gar-sarti-even}. Since $g\geq 6$, from \cite[Lemma 3.1]{gar-sarti-even} we obtain that $H=L-\mathfrak{e}$ is very ample.
The fact that a general smooth curve $D\in|H|$ is Brill--Noether general relies once more on showing that there is no decomposition $H=A_1+A_2$, for divisors $A_i$ on $X_g$ with $h^0(X_g, A_i) \geq 2$, for $i=1,2$. Suppose there is such a decomposition and write $A_i=a_i L+b_i\mathfrak{m}_i$, where $a_i, b_i\in \mathbb Z$ and $\mathfrak{m}_i \in \mathfrak{N}$,
for $i=1,2$. By intersecting with the nef class $L$, we have that $a_i \geq 0$ for $i=1,2$. Since $a_1+a_2=1$, we may assume $a_1=0$ and $a_2=1$; thus $A_1 \in \mathfrak{N}$ is orthogonal to $L$. This implies that $\pi(A_1)$ is a finite sum of points (as $\pi(L)$ is ample and in fact generates $\mbox{Pic}(\bar{Y})$). This in turn forces $A_1$ to be a sum of the disjoint $(-2)$ curves $N_1, \ldots, N_8$, so that $h^0(X_g, A_1)=1$.
\end{proof}

\begin{cor} \label{koszul-prym}
Let $X_g$ and $H$ be as above. Then $K_{i,1}(X_g,H)=K_{i-2,2}(X_g,H)=0$.
\end{cor}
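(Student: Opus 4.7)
I would prove both vanishings by reducing to the Brill--Noether generality of a general smooth $D\in |H|$ established in Lemma~\ref{pg-odd}, combined with Voisin's proof of Green's Conjecture for curves on $K3$ surfaces. First, to fix the numerics: since $\mathrm{Pic}(X_g)=\mathbb{Z}\cdot L\oplus \mathfrak{N}$ is an orthogonal direct sum we have $L\cdot \mathfrak{e}=0$, and moreover $\mathfrak{e}^{\,2}=\frac{1}{4}\sum_{j=1}^{8}N_j^{\,2}=-4$, whence $H^{2}=(L-\mathfrak{e})^{2}=2g-6=4i$. By adjunction a smooth $D\in |H|$ has genus $g_D=2i+1$, and by Lemma~\ref{pg-odd} a general such $D$ is Brill--Noether general, so $\mathrm{Cliff}(D)=\lfloor(g_D-1)/2\rfloor=i$.

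The vanishing $K_{i-2,2}(X_g,H)=0$ then follows immediately from \cite[Theorem 1.3]{aprodu-farkas-green}, since $\mathrm{Cliff}(D)=i\geq i-1$ forces $K_{j,2}(X_g,H)=0$ for every $j\leq i-2$.

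For the remaining vanishing $K_{i,1}(X_g,H)=0$, I would first pass to $D$ by a Lefschetz-type isomorphism $K_{p,q}(X_g,H)\cong K_{p,q}(D,K_D)$, valid for every $p$ and every $q\geq 1$. This is obtained in the same spirit as the Lemma of Section~2: since $H$ is big and nef on the $K3$ surface $X_g$, Kodaira vanishing yields $H^{1}(X_g,nH)=0$ for every $n\geq 0$, so the short exact sequences
$$0\to \mathcal{O}_{X_g}\bigl((n-1)H\bigr)\to \mathcal{O}_{X_g}(nH)\to K_D^{\otimes n}\to 0$$
assemble into a short exact sequence of graded $\mathrm{Sym}\,H^{0}(X_g,H)$-modules, on which the connecting Koszul maps (multiplication by the defining section $s\in H^{0}(X_g,H)$ of $D$) vanish; together with the splitting $H^{0}(X_g,H)\cong \mathbb{C}\cdot s\oplus H^{0}(D,K_D)$ and an induction on $p$, this produces the desired isomorphism. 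Applying now the self-duality of the Gorenstein resolution of the canonical ring of $D\hookrightarrow \PP^{g_D-1}$, in the form
$$K_{p,q}(D,K_D)\cong K_{g_D-2-p,\,3-q}(D,K_D)^{\vee},$$
the required vanishing $K_{i,1}(D,K_D)=0$ becomes $K_{i-1,2}(D,K_D)=0$, which in turn follows from Green's Conjecture for curves on $K3$ surfaces (Voisin's theorem, together with \cite[Theorem 1.3]{aprodu-farkas-green}), since $i-1\leq \mathrm{Cliff}(D)-1$.

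The step requiring the most care is the Lefschetz-type isomorphism at $q=1$: the standard form of Green's hyperplane section theorem gives such an isomorphism only for $q\geq 2$, but the $K3$ vanishing $h^{1}(\mathcal{O}_{X_g})=0$ allows the argument to be pushed through also at $q=1$.
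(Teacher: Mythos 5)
Your proposal is correct and follows essentially the same route as the paper: the paper's proof simply notes that a general $D\in|H|$ has genus $2i+1$ and $\mathrm{Cliff}(D)=i$ by the preceding lemma, and then cites \cite[Theorem 1.3]{aprodu-farkas-green}, which packages exactly the Lefschetz transfer, duality, and Green's Conjecture steps you spell out. Your numerics ($H^2=4i$, $g(D)=2i+1$, $\mathrm{Cliff}(D)=i$) and both reductions are accurate.
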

\begin{proof}
Let $D \in |H|$ be a general divisor, hence $g(D)=2i+1$. From the previous lemma, $\mbox{Cliff}(D)=i$. Thus the result follows from \cite[Theorem 1.3]{aprodu-farkas-green}.
\end{proof}

\begin{lem} \label{coh-lemma}
Assume $g \geq 11$ is odd and let $X_g$ be a general Nikulin surface with Picard lattice $\Lambda_{g}$. Then $H^1(X_g,qH-L)=0$ for $q \geq 0$ and $H^0(X_g,H-L)=0$.
\end{lem}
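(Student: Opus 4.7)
\emph{Proof plan.} I would reduce everything to two ingredients: the non-effectivity of $\pm\mathfrak{e}$ on a general $\Lambda_g$-polarized Nikulin surface, together with the bigness and nefness of $qH - L$ for $q \geq 2$. The case division into $q=0$, $q=1$ and $q\geq 2$ then handles the required vanishings via Kodaira and Kawamata--Viehweg type arguments.

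First I would verify that neither $\mathfrak{e}$ nor $-\mathfrak{e}$ is effective. Since $\mathfrak{e}\cdot L=0$ and $L$ is big and nef (Lemma \ref{pg-odd}), any effective representative of $\pm\mathfrak{e}$ is supported on integral curves orthogonal to $L$. Exactly as in the proof of Lemma \ref{pg-odd}, on a general $\Lambda_g$-polarized K3 those curves are precisely $N_1,\ldots,N_8$, so any such divisor would be a non-negative integer combination of the $N_j$; but $\pm\mathfrak{e}=\pm\tfrac12\sum N_j$ is not of this form. This gives $H^0(X_g,H-L)=H^0(X_g,-\mathfrak{e})=0$. Combined with $\chi(\mathcal{O}_{X_g}(-\mathfrak{e}))=2+\tfrac12\mathfrak{e}^2=0$ and $h^2(-\mathfrak{e})=h^0(\mathfrak{e})=0$ by Serre duality, this yields $H^1(X_g,H-L)=0$, settling $q=1$. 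The case $q=0$ is immediate from Kodaira vanishing applied to the big and nef class $L$.

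For $q\geq 2$ I would show that $qH-L$ is big and nef and then apply Kawamata--Viehweg vanishing. Using $L=H+\mathfrak{e}$, $\mathfrak{e}^2=-4$ and $L\cdot\mathfrak{e}=0$, one computes
$$(qH-L)^2=(q-1)^2(2g-2)-4q^2,$$
which is strictly positive for $q\geq 2$ once $g\geq 11$ (the binding case $q=2$ giving $2g-18$). Since $(qH-L)\cdot L=(q-1)(2g-2)>0$, the class is effective and therefore big. For nefness, by the Hodge index theorem any integral curve $R$ with $(qH-L)\cdot R<0$ must be a $(-2)$ curve. Write $R=aL+\sum c_j N_j$ with $a\geq 0$; if $R\neq N_j$, then $R\cdot N_j\geq 0$ forces $c_j\leq 0$ for all $j$. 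For $a=0$, the lattice classification of $(-2)$-classes inside the Nikulin lattice $\mathfrak{N}$ (only $\pm N_j$) forces $R=N_j$, and $(qH-L)\cdot N_j=q>0$. For $a\geq 1$, the relation $R^2=-2$ gives $\sum c_j^2=a^2(g-1)+1$, and Cauchy--Schwarz yields
$$(R\cdot\mathfrak{e})^2=\Bigl(\sum c_j\Bigr)^2\leq 8\sum c_j^2=8\bigl(a^2(g-1)+1\bigr).$$
The identity $(qH-L)\cdot R=(q-1)a(2g-2)-q(R\cdot\mathfrak{e})$ then becomes non-negative, the binding case being $a=1$, $q=2$, which requires $g^2-10g+1\geq 0$, i.e.\ precisely $g\geq 11$.

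The hardest part is the nefness verification for hypothetical $(-2)$ curves of positive $L$-degree, since these cannot be enumerated as cleanly as those lying inside $\mathfrak{N}$. The Cauchy--Schwarz bound is sharp exactly at $a=1$, $q=2$, and it is this case that dictates the hypothesis $g\geq 11$ in the statement.
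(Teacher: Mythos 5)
Your argument is correct, but it takes a more self-contained route than the paper. The paper disposes of this lemma almost entirely by citation: the vanishing $H^0(X_g,-\mathfrak{e})=H^1(X_g,-\mathfrak{e})=0$ is quoted from \cite[Lemma 1.3]{farkas-verra-moduli-theta}, and the $q\geq 2$ cases follow from \cite[Proposition 3.5]{gar-sarti-even}, which gives that $2H-L$ is big and base point free for $g\geq 10$, whence $qH-L=(2H-L)+(q-2)H$ is big and nef for all $q\geq 2$. You instead reprove both ingredients inside the lattice $\Lambda_g$: the non-effectivity of $\pm\mathfrak{e}$ via the contraction argument from Lemma \ref{pg-odd} together with Riemann--Roch, and the nefness of $qH-L$ via the Hodge-index/Cauchy--Schwarz estimate on a hypothetical $(-2)$-curve $R=aL+\sum_j c_jN_j$ --- the same style of computation the paper itself carries out for the auxiliary surface $\hat{X}_g$ later in Section 5. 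Your computations check out: $(qH-L)^2=(q-1)^2(2g-2)-4q^2$, the relation $\sum_j c_j^2=a^2(g-1)+1$, and the binding inequality $a^2(g-1)(g-9)\geq 8$ at $a=1$, $q=2$, which is exactly $g^2-10g+1\geq 0$; this has the virtue of making visible precisely where the hypothesis $g\geq 11$ (in fact $g\geq 10$ would do) enters. Two cosmetic points: for $q=0$ you should invoke Kawamata--Viehweg or Ramanujam rather than Kodaira, since $L$ is big and nef but not ample on a Nikulin surface (it satisfies $L\cdot N_j=0$); and your argument yields nefness of $qH-L$ where the cited result gives base point freeness of $2H-L$ --- a weaker conclusion, but nefness is all that the $H^1$-vanishing requires.
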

\begin{proof}
As already pointed out, $H$ is very ample for $g \geq 6$. From \cite[Proposition 3.5]{gar-sarti-even},  the class $2H-L$ is big and base point free for $g \geq 10$; thus $H^1(X_g,qH-L)=0$ for $q \geq 2$. Furthermore, $H^0(X_g,H-L)=H^1(X_g,H-L)=0$ from \cite[Lemma  1.3]{farkas-verra-moduli-theta} and $H^1(X_g,L)=0$ since $L$ is big and nef.
\end{proof}

From the above results and Proposition \ref{prop1}, to establish the Prym-Green Conjecture for a general curve $C\in |L|$, it suffices to show that for a smooth curve $D\in |H|$
$$K_{i-1,2}(D,-L_D,K_D)=0 \ \mbox{ and } \ K_{i-3,3}(D,-L_D,K_D)=0.$$

Observe that both these statements would follow from the Minimal Resolution Conjecture \cite{farkas-popa-mustata}, provided  $L_{D}\in \mbox{Pic}(D)$ was a general line bundle in its respective Jacobian, which is of course not the case. However, one can try to follow the proof of \cite[Lemma 3.3]{farkas-popa-mustata} and specialize to hyperelliptic curves while still retaining the embedding of the Prym curve $C$ in a Nikulin surface. We carry this approach out below.

Let $\mathfrak{T}_g$ be the rank two lattice with ordered basis $\{ L,E\}$ and with intersection form
\[ \left( \begin{array}{cc}
2g-2 & 2  \\
2 & 0  \end{array} \right).\]
Consider the lattice $\mathfrak{T}_g \oplus \mathfrak{N}$, where $\mathfrak{N} $ is the Nikulin lattice. This rank $10$ lattice is even,
of signature $(1,9)$; thus by a result of Nikulin and the Torelli theorem \cite{dolgachev}, there exists a $K3$ surface $\hat{X}_g$ with
Picard lattice isomorphic to  $\mathfrak{T} \oplus \mathfrak{N} $. After applying Picard-Lefschetz reflections and replacing $L$ with $-L$ if necessary, we may assume $L$ is nef. Since we have a primitive embedding $\Lambda_g \hookrightarrow\mathfrak{T}_g \oplus \mathfrak{N}$, we conclude that $\hat{X}_g$ is a Nikulin surface of the first kind.
\begin{lem}
Let $\hat{X}_g$ be as above with $g \geq 11$. Then $H=L-\mathfrak{e}\in \mathrm{Pic}(\hat{X}_g)$ is big and nef.
\end{lem}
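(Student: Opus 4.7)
The plan is to break the proof into three steps: computing $H^2$, showing $H$ is effective, and checking nefness against $(-2)$-curves.

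First I would compute $H^2 = (L-\mathfrak{e})^2 = L^2 - 2\,L\cdot\mathfrak{e} + \mathfrak{e}^2 = (2g-2) + (-4) = 2g-6$, using that $L\in \mathfrak{T}_g$ and $\mathfrak{e}\in \mathfrak{N}$ lie in orthogonal summands (so $L\cdot \mathfrak{e}=0$) and that $\mathfrak{e}^2 = \tfrac14\sum_j N_j^2 = -4$. For $g\geq 11$ this is strictly positive, so bigness will follow as soon as nefness is in hand. Next, Riemann--Roch on the $K3$ surface $\hat X_g$ yields $\chi(H) = 2 + H^2/2 = g-1 > 0$, while $H\cdot L = L^2 - \mathfrak{e}\cdot L = 2g-2 > 0$ together with $L$ nef rules out $-H$ being effective. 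Hence $h^0(\hat X_g, H) > 0$ and $H$ is represented by an effective divisor.

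The heart of the proof is nefness, for which it suffices to verify that $H\cdot R\geq 0$ for every smooth rational curve $R\subset \hat X_g$. Writing such an $R$ as $aL + bE + \mathfrak{m}$ with $\mathfrak{m}\in \mathfrak{N}$, I would exploit the three relations
\begin{align*}
R^2 &= a^2(2g-2) + 4ab + \mathfrak{m}^2 = -2,\\
R\cdot L &= a(2g-2) + 2b \geq 0 \quad (\text{since $L$ is nef}),\\
H\cdot R &= R\cdot L - \mathfrak{e}\cdot \mathfrak{m},
\end{align*}
together with the Cauchy--Schwarz bound
$$(\mathfrak{e}\cdot \mathfrak{m})^2 \leq (-\mathfrak{e}^2)(-\mathfrak{m}^2) = 4\bigl(2 + a^2(2g-2) + 4ab\bigr),$$
which is available because $\mathfrak{N}$ is negative definite. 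For $|a|\geq 1$ and $g\geq 11$ this combination forces $H\cdot R\geq 0$ after a short arithmetic check. The extremal case $a=0$ reduces to $\mathfrak{m}^2 = -2$, and the classification of $(-2)$-vectors in the Nikulin lattice gives $\mathfrak{m} = \pm N_j$: if $\mathfrak{m} = N_j$, then $H\cdot R = 2b + 1\geq 1$; if $\mathfrak{m} = -N_j$, one uses $R\cdot N_j = 2$ and the effectivity of $R$ to split off $N_j$ and reduce to an already handled class.

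The main obstacle is the careful bookkeeping of the $(-2)$-class classification in the rank-ten lattice $\mathfrak{T}_g\oplus \mathfrak{N}$, especially in the edge case $a=0$, $\mathfrak{m}=-N_j$: one must argue that such a class either is not effective as an irreducible rational curve, or, when effective, decomposes into components each of which intersects $H$ non-negatively. All other subcases are resolved by plain lattice arithmetic using the orthogonal direct sum structure and the positivity of $g-11$.
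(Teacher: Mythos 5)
Your proposal is correct and follows essentially the same route as the paper: assume a $(-2)$-curve $R$ with $(H\cdot R)<0$, decompose it against the splitting $\mathfrak{T}_g\oplus\mathfrak{N}$, use nefness of $L$ together with Cauchy--Schwarz in the negative definite Nikulin lattice to force $\mathfrak{m}^2\in\{0,-2\}$, and then rule out the short vectors of $\mathfrak{N}$. The only (cosmetic) difference is that you run Cauchy--Schwarz coordinate-free on $\mathfrak{e}\cdot\mathfrak{m}$ and invoke evenness of $\mathfrak{N}$, whereas the paper works with the explicit half-integer coefficients $c_j$, their common parity, and an explicit appeal to the Hodge index theorem (which in your setup is just the identity $k^2=(2g-2)(aL+bE)^2+4b^2$).
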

\begin{proof}
As $(H)^2>0$, we need to show that $H$ is nef. The class $H$ is effective, as $(H \cdot L)>0$. Suppose for a contradiction that there exists a  $(-2)$ curve $\Gamma$ with $(\Gamma \cdot H)<0$. We may write $$\Gamma=aL+bE+\sum_{j=1}^8 c_j N_j,$$
for $a,b \in \mathbb{Z}$ and $c_j \in \frac{1}{2}\mathbb{Z}$ for $j=1, \ldots, 8$. Since $(N_j \cdot H)=1$ for all $j$, we have $\Gamma \neq N_j$, so that $(N_j \cdot \Gamma) \geq 0$ and hence $c_j \leq 0$ for all $j$. Set $k:=(\Gamma \cdot L)=a(2g-2)+2b$; since $L$ is big and nef, $k \geq 0$. From $( \Gamma \cdot H) <0$ we have $k < \sum_{j=1}^8 |c_j|$ and thus $k^2 < 8\sum_{j=1}^8 c_j^2$, by the Cauchy--Schwarz inequality. Further, since $(\Gamma)^2=-2$, we find that $(aL+bE)^2=2\sum_{j=1}^8 c_j^2-2$, and then the Hodge index theorem implies $(L)^2(aL+bE)^2 \leq \bigl(L \cdot (aL+bE)\bigr)^2$, or equivalently
$(2g-2)\bigl(2 \sum_{j=1}^8 c_j^2-2\bigr) \leq k^2$. Putting these two inequalities together, we obtain $\sum_{j=1}^8 c_j^2 < \frac{g-1}{g-3}$, and so $\sum_{j=1}^8 (2c_j)^2 \leq 4$, for $g \geq 11$.

Since $2c_j\in \mathbb{Z}$, there are two cases. In the first case, $c_{\ell}=-1$ for some $\ell$ and $c_j=0$ for $j \neq \ell$. From $k < \sum_{j=1}^8 |c_j|$, we then have $k=0$ and further  $(aL+bE)^2=0$. Putting these together gives $2ab=0$ and thus $a=0$ or $b=0$. Using again that $k=\bigl(L \cdot (aL+bE)\bigr)=0$, we must have $a=b=0$ so $\Gamma=-N_{\ell}$, which contradicts the effectiveness of $\Gamma$. In the second case $|c_j| \leq \frac{1}{2}$ for all $j$, and there are at most four values of $j$ such that $c_j\neq 0$. But since $\mathfrak{N}$ is generated by $N_1, \ldots, N_8$ and $\mathfrak{e}$ with $2\mathfrak{e}=N_1+ \ldots +N_8$, the only such elements in $\mathfrak{T}_g \oplus \mathfrak{N} $ must have $c_j=0$ for all $j$. But this implies $k<0$ which is a contradiction.
\end{proof}
\begin{lem}
Let $\hat{X}_g$ and $H$ be as above, with $g \geq 11$. Then $H$ is base point free.
\end{lem}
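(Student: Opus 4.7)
My plan is to apply Mayer's criterion (Proposition \ref{bpf-prop}): since $H$ is already big and nef, it suffices to rule out the existence of a smooth elliptic curve $F \subset \hat{X}_g$ with $(F \cdot H) = 1$. The argument then proceeds by a lattice calculation essentially parallel to the previous lemma, but with $F^2 = 0$ in place of $\Gamma^2 = -2$, producing a contradiction between a parity obstruction and a Hodge-index bound.

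Assuming such an $F$ exists, I would write $F = aL + bE + \sum_{j=1}^8 c_j N_j$ with $a, b \in \mathbb{Z}$ and $c_j \in \frac{1}{2}\mathbb{Z}$ subject to the integrality condition defining $\mathfrak{N}$ (either all $c_j \in \mathbb{Z}$, or all $c_j \in \frac{1}{2} + \mathbb{Z}$). Since $F$ is elliptic and the $N_j$ are rational, $F \neq N_j$, so $(F \cdot N_j) = -2c_j \geq 0$ and hence $c_j \leq 0$ for every $j$. Setting $s := \sum_j |c_j|$, one computes $k := (F \cdot L) = a(2g-2) + 2b$, which is \emph{even}, together with $(F \cdot H) = (F \cdot L) - (F \cdot \mathfrak{e}) = k - s = 1$, so $k = 1 + s$. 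Parity of $k$ then forces $s$ to be odd, in particular $s \geq 1$.

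For the opposite inequality, I would use $F^2 = 0$ to obtain $(aL + bE)^2 = 2 \sum_j c_j^2$, then apply the Hodge index theorem to $L$ (with $L^2 = 2g-2 > 0$) to get $4(g-1) \sum_j c_j^2 \leq k^2 = (1+s)^2$. Combining this with Cauchy--Schwarz $\sum_j c_j^2 \geq s^2/8$ yields
\[
\frac{(g-1) s^2}{2} \leq (1+s)^2,
\]
which for $g \geq 11$ becomes $5 s^2 \leq (1+s)^2$, forcing $s < (1+\sqrt{5})/4 < 1$ and contradicting $s \geq 1$.

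The only subtlety I expect to deal with carefully is the half-integer case of the Nikulin lattice: if all $c_j \in \frac{1}{2} + \mathbb{Z}$, then each $|c_j| \geq 1/2$ and so $s \geq 4$, which is incompatible with the same Hodge-index inequality a fortiori. Once this possibility is dispatched, Mayer's criterion completes the proof that $H$ is base point free.
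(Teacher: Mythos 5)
Your proposal is correct and follows essentially the same route as the paper: Mayer's criterion, the decomposition $F=aL+bE+\sum_j c_jN_j$ with $c_j\le 0$, the relation $k=(F\cdot L)=1+\sum_j|c_j|$, and the Hodge index theorem applied to $L$ and $aL+bE$ combined with Cauchy--Schwarz. The only difference is the order of the closing steps: the paper sharpens the bound using $2|c_j|\le 4c_j^2$ (valid since $2c_j\in\mathbb{Z}$) to force all $c_j=0$ already for $g\ge 6$ and then reads off the parity contradiction from $k=1$, whereas you invoke the parity of $k$ first to get $\sum_j|c_j|\ge 1$ and then contradict this numerically, which requires $g\ge 10$ but is fine under the hypothesis $g\ge 11$.
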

\begin{proof}
From Proposition \ref{bpf-prop} and the previous lemma, it suffices to show that there is no smooth elliptic curve $F$ with $(F \cdot H)=1$. Suppose such an $F$ exists and write $F=aL+bE+\sum_{j=1}^8 c_j N_j$ for $a,b \in \mathbb{Z}$ and $c_j \in \frac{1}{2}\mathbb{Z}$ for $j=1,\ldots, 8$. We have $c_j\leq 0$ since $(F \cdot N_j) \geq 0$.

Set $k:=(F \cdot L)=a(2g-2)+2b \geq 0$. Since $1=(F \cdot H)$, we find $k=1+ |c_1|+\cdots+|c_8|$. Note that
$(aL+bE)^2=2\sum_{j=1}^8 c_j^2.$ The Hodge Index Theorem applied to $L$ and $aL+bE$ gives
$$(2g-2)\Bigl(2 \sum_{j=1}^8 c_j^2\Bigr) \leq k^2 =\Bigl(1+\sum_{j=1}^8 |c_j|\Bigr)^2.$$ One has $(1+\sum_{i=1}^8 |c_i|)^2 \leq 1+\sum_{j=1}^8 2|c_j|+8\sum_{j=1}^8 c_j^2 \leq 1+12\sum_{j=1}^8 c_j^2$, using the fact that $2 c_j \in \mathbb{Z}$. Combining this with the above inequality, we write $\bigl(4(g-1)-12\bigr)\sum_{j=1}^8 c_j^2 \leq 1$ and hence $\sum_{j=1}^8 (2c_j)^2=0$ for $g \geq 6$. Thus $c_j=0$ for all $j$ and $k=1$. But $k=a(2g-2)+2b$ is even, which is a contradiction.
\end{proof}
We now show that the $E$ is the class of an irreducible, smooth elliptic curve.
\begin{lem}
Let $\hat{X}_g$ and $H$ be as above with $g \geq 11$. Then $E$ is the class of an irreducible, smooth elliptic curve.
\end{lem}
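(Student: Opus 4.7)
The plan is to reduce the statement to showing that $E$ is nef. Once this is known, \cite[Prop.\ 2.3.10]{huy-lec-k3} implies that $|E|$ is base point free, and since $E^2 = 0$ a general member of $|E|$ is a smooth elliptic curve; primitivity of $E$ in $\mathrm{Pic}(\hat{X}_g)$ then guarantees that a general member is irreducible.

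Two preliminary observations are essentially immediate. Primitivity of $E$ in $\mathrm{Pic}(\hat{X}_g) = \mathfrak{T}_g \oplus \mathfrak{N}$ follows from the orthogonal direct sum decomposition together with the fact that $E$ is a basis element of $\mathfrak{T}_g$: any expression $E = mE'$ with $E' \in \mathrm{Pic}(\hat{X}_g)$ projects trivially to $\mathfrak{N}$, hence $E' \in \mathfrak{T}_g$, forcing $m = 1$. Effectiveness of $E$ follows from Riemann--Roch: $\chi(\hat{X}_g, E) = 2 + \tfrac{1}{2} E^2 = 2$, while $h^2(\hat{X}_g, E) = h^0(\hat{X}_g, -E) = 0$ because $(-E \cdot L) = -2 < 0$ and $L$ is nef. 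Hence $h^0(\hat{X}_g, E) \geq 2$.

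The key step, and the one I expect to be the main obstacle, is establishing that $E$ is nef; the argument proceeds in the same vein as the two preceding lemmas. Suppose for contradiction that a smooth rational curve $\Gamma$ satisfies $(\Gamma \cdot E) < 0$, and write $\Gamma = aL + bE + \sum_{j=1}^{8} c_j N_j$ with $a, b \in \mathbb{Z}$ and $c_j \in \tfrac{1}{2} \mathbb{Z}$. Then $(\Gamma \cdot E) = 2a$, so $a < 0$. Since $(N_j \cdot E) = 0$ we have $\Gamma \neq N_j$, hence $(\Gamma \cdot N_j) = -2c_j \geq 0$, so $c_j \leq 0$; and nefness of $L$ supplies $k := (\Gamma \cdot L) = a(2g-2) + 2b \geq 0$. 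Substituting $2b = k - a(2g-2)$ into the relation
\[ -2 \;=\; (\Gamma)^2 \;=\; a^2(2g-2) + 4ab - 2\sum_{j=1}^{8} c_j^2 \]
yields
\[ a^2(2g-2) \;=\; 2 + 2ak - 2\sum_{j=1}^{8} c_j^2. \]
Since $a < 0$ and $k \geq 0$, the right-hand side is at most $2$, whereas $a^2(2g-2) \geq 2g-2 \geq 20$ for $g \geq 11$, a contradiction. Thus $E$ is nef, and combining this with primitivity, effectiveness, and $E^2 = 0$ concludes the proof via \cite[Prop.\ 2.3.10]{huy-lec-k3}.
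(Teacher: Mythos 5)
Your proof is correct, and for the crucial nefness step it takes a genuinely different and noticeably more economical route than the paper. The paper argues as follows: it uses that $E-R$ is effective (where $R$ is an integral component of the base locus with $(R\cdot E)<0$) to pin down $b\in\{-a(g-1),\,1-a(g-1)\}$, then applies the Hodge Index Theorem to $L$ and $aL+bE$ to bound $\sum_j(2c_j)^2\leq 4$, invokes the parity constraint on the $2c_j$ coming from the structure of the Nikulin lattice to reduce to two configurations of the $c_j$, and finally excludes each of the resulting four subcases by solving $(R)^2=-2$. You bypass all of this: substituting $2b=k-a(2g-2)$ with $k=(\Gamma\cdot L)\geq 0$ directly into $(\Gamma)^2=-2$ gives $a^2(2g-2)=2+2ak-2\sum_j c_j^2\leq 2$, which already contradicts $a\leq -1$ and $2g-2\geq 20$. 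This is the observation that $(aL+bE)^2=2ak-a^2(2g-2)$ because $(E)^2=0$, and it renders the Hodge Index Theorem, the effectiveness of $E-\Gamma$, and even the sign and parity information on the $c_j$ unnecessary (you only use $\sum_j c_j^2\geq 0$); it also works for all $g\geq 3$ rather than just $g\geq 11$. Your treatment of the remaining ingredients --- effectiveness of $E$ via Riemann--Roch and $(E\cdot L)>0$, primitivity via the orthogonal decomposition $\mathfrak{T}_g\oplus\mathfrak{N}$, and the passage from a primitive nef class of square zero to a smooth irreducible elliptic curve via the cited proposition --- matches the paper's.
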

\begin{proof}
As $E$ is primitive with $(E)^2=0$, it suffices to show that $E$ is nef, \cite[Proposition 2.3.10]{huy-lec-k3}. The class $E$ is effective as $(E)^2=0$ and $(E \cdot L)>0$. If $E$ is not nef, then there exists a smooth rational curve $R$ with $(R \cdot E)<0$. Obviously, $R$ must be a component of the base locus of $|E|$. Write $R=aL+bE+\sum_{j=1}^8 c_j N_i$, for $a,b \in \mathbb{Z}$, $c_j \in \frac{1}{2} \mathbb{Z}$. From $(R \cdot E)<0$, we have $a<0$. Since $L$ is nef, $(R \cdot L) \geq 0$ which gives $b \geq -a(g-1)$. Further, $E-R$ is effective, so $\bigl(L \cdot (E-R)\bigr) \geq 0$, giving $b \leq 1-a(g-1)$.  So either $b=-a(g-1)$, or $b=1-a(g-1)$. In either case, $(R \cdot L)=2(a(g-1)+b) \leq 2$. The Hodge Index Theorem applied to $L$ and $aL+bE$ yields
$$(2g-2)\Bigl(2 \sum_{j=1}^8 c_j^2-2\Bigr) \leq (R \cdot L)^2 \leq 4. $$
Thus $\sum_{j=1}^8 (2c_j)^2 \leq 4$ for $g \geq 6$, as $2c_j \in \mathbb{Z}$ for all $j$. Since $\mathfrak{N}$ is generated by $N_1, \ldots, N_8$, and $\mathfrak{e}$, the integers $2c_j$ all have the same parity for $j=1, \ldots, 8$, and so there are only two possibilities, namely $c_j=0$, for all $j$, or there is an index $\ell$ with $c_{\ell}=-1$ and $c_j=0$, for $j\neq \ell$ (intersecting with $N_j$, shows that $c_j \leq 0$ for all $j$ as in the previous lemmas). In the case $c_1=\ldots=c_8=0$, we have either $R=a(L-(g-1)E)$ or $R=aL+(1-a(g-1))E$. Then $(R)^2=-2$ leads to either $a^2(g-1)=1$ which is obviously impossible, or $1=a(a(g-1)-2)$, which is also not possible for $a<0$. In the case that there exists $\ell$ with $c_\ell=1$ and $c_j=0$, for  $j \neq \ell$, we have either $R=a(L-(g-1)E)-N_j$ or $R=aL+(1-a(g-1))E-N_j$. Then $(R)^2=-2$ leads to either $a^2(g-1)=0$ or $2a(2-a(g-1))=0$, either of which forces $a=0$, contradicting $a<0$.
\end{proof}

As a consequence of the above, we see that any smooth curve $D \in |H|$ is hyperelliptic, with $\mathcal{O}_{D}(E)$ defining a pencil $\mathfrak g^1_2$. The next two technical lemmas will be needed later.
\begin{lem}
Let $\hat{X}_g$ and $H$ be as above with $g \geq 11$ odd. Then $L-\frac{g-1}{2}E$ is base point free.
\end{lem}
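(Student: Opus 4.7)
The plan is to set $M := L - \tfrac{g-1}{2}E$ and first observe the key numerical facts: $M^2 = (2g-2) - 2(g-1)\cdot 2 + 0 = 0$, and $M$ is primitive in $\mathfrak{T}_g \oplus \mathfrak{N}$ since its coefficient of $L$ is $1$. Given these, by \cite[Proposition 2.3.10]{huy-lec-k3} (already invoked earlier for $E$), it suffices to show $M$ is nef, and then $M$ will be base point free, in fact defining a new elliptic fibration on $\hat{X}_g$.

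To establish nefness, I would follow the template of the preceding three lemmas. Assume for contradiction there exists a smooth rational curve $R = aL + bE + \sum_{j=1}^{8} c_j N_j$ with $(R \cdot M) < 0$. Since $R$ is effective and distinct from $N_j$, intersection with $N_j$ gives $c_j \leq 0$ for all $j$; intersection with the nef classes $E$ and $L$ gives $a \geq 0$ and $b \geq -(g-1)a$. One computes $(R \cdot M) = (g-1)a + 2b$, and the assumed strict negativity, together with $b \geq -(g-1)a$, forces $a \geq 1$.

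The crucial step is then a short direct calculation replacing the Hodge-index argument used earlier. Writing $\alpha := aL + bE$ for the $\mathfrak{T}_g$-component of $R$, orthogonality of $\mathfrak{T}_g$ and $\mathfrak{N}$ yields
\[
\alpha^2 = a\bigl(a(2g-2) + 4b\bigr) = 2a\,(R \cdot M) < 0,
\]
so from $R^2 = \alpha^2 - 2\sum c_j^2 = -2$ we obtain $\sum c_j^2 < 1$. Because the $c_j$ lie in $\tfrac12 \mathbb{Z}$ and are either all integral or all half-integral (this is the structure of $\mathfrak{N}$, used already in the preceding lemmas), the half-integral case gives $\sum c_j^2 \geq 8 \cdot \tfrac14 = 2$, which is excluded, so necessarily $c_j = 0$ for all $j$.

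Thus $R = aL + bE$, and $R^2 = -2$ becomes $a\bigl(a(g-1) + 2b\bigr) = -1$, forcing $a = 1$ and $2b = -g$. Since $g$ is odd, $b$ fails to be an integer, a contradiction. The main obstacle, as in the previous lemmas, is controlling which $(-2)$-curves can exist in this rank-$10$ lattice, but here the odd-parity of $g$ is doing the decisive work at the end, which is why the statement is specifically for $g$ odd.
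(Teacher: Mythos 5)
Your proof is correct, and while the overall strategy coincides with the paper's --- reduce base-point-freeness to nefness of the primitive isotropic class $M=L-\frac{g-1}{2}E$, then rule out a $(-2)$-curve $R=aL+bE+\sum_j c_jN_j$ with $(R\cdot M)<0$ by lattice arithmetic, with the parity of $g$ supplying the final contradiction --- your execution of the lattice step is genuinely different and cleaner. The paper first pins down $a=1$ and $b\le -\frac{g-1}{2}$ by intersecting the effective class $M-R$ with the nef classes $E$ and $L$, obtains only the non-strict bound $\sum_j(2c_j)^2\le 4$, and must therefore dispose of a second case (a single $c_\ell=-1$), where the contradiction is $(R\cdot M)=0$ rather than parity. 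Your identity $(aL+bE)^2=2a(R\cdot M)<0$, valid because $a\ge 1$ (which you correctly extract from $(R\cdot L)\ge 0$ together with $(R\cdot M)<0$), yields the strict inequality $\sum_j c_j^2<1$; combined with the integral/half-integral dichotomy for coefficients of elements of $\mathfrak{N}$ this kills all the $c_j$ at once, and the single remaining equation $a\bigl(a(g-1)+2b\bigr)=-1$ forces $a=1$ and $2b=-g$, impossible for $g$ odd. This avoids the decomposition $M-R$ and the paper's second case entirely. Two cosmetic points: the displayed computation of $M^2$ has a typo in the cross term (it should read $(2g-2)-2\cdot\frac{g-1}{2}\cdot 2=0$), and you should record, as the paper does, that $M$ is effective (from $M^2=0$ and $(M\cdot L)=g-1>0$ via Riemann--Roch) before discussing its nefness and base locus; neither affects the validity of the argument.
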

\begin{proof}
We have $(L-\frac{g-1}{2}E)^2=0$ and  $L \cdot (L-\frac{g-1}{2}E)=g-1>0$, so $L-\frac{g-1}{2}E$ is effective and it suffices to show that it is nef. Suppose by contradiction that there is a smooth rational curve $R$ with $R \cdot (L-\frac{g-1}{2}E)<0$ and write $R=aL+bE+\sum_{j=1}^8 c_j N_j$ as above. By intersecting with $N_j$, we see that  $c_j \leq 0$ for $j=1,\ldots, 8$. Next, since $R$ is a component of $L-\frac{g-1}{2}E$, the class $L-\frac{g-1}{2}E-R$ must be effective, and intersecting it with the nef class $E$ gives $a\leq 1$. Further, $R \cdot (L-\frac{g-1}{2}E)<0$ yet $(R \cdot L) \geq 0$, so $(R \cdot E)>0$ and thus $a=1$. Intersecting the effective class $L-\frac{g-1}{2}E-R$ with the nef class $L$ now yields $b \leq -\frac{g-1}{2}$. Hence $(L+bE)^2=2g-2+4b \leq 0$. From $(R)^2=-2$ we deduce $(L+bE)^2=2 \sum_{j=1}^8 c_j^2-2$. Thus $\sum_{j=1}^8 (2c_j)^2 \leq 4$. Using that, as in the previous lemma $2c_j \in \mathbb{Z}$ have the same parity, we distinguish two possibilities: either all $c_j=0$, or else, there exists $\ell$ with $c_{\ell}=-1$ and $c_j \neq 0$ for $j \neq \ell$.

In the former case, $R=L+bE$, with $b \leq -\frac{g-1}{2}$, and $(R)^2=-2$ gives $-2=2(g-1)+4b$. As $g$ is odd, $2(g-1)+4b$ is divisible by $4$, a contradiction. In the latter case $R=L+bE-N_{\ell}$ and $(R)^2=-2$ implies $-2=2(g-1)+4b-2$, which produces $b=-\frac{g-1}{2}$. But then $(L-\frac{g-1}{2}E) \cdot R=(L-\frac{g-1}{2}E) \cdot (L-\frac{g-1}{2}E-N_{\ell})=0$, contradicting the assumptions.
\end{proof}
\begin{lem}
Let $\hat{X}_g$ and $H$ be as above with $g \geq 11$ odd. Then the class  $cE-\mathfrak{e}\in \mathrm{Pic}(\hat{X}_g)$ is not effective for any $c \in \mathbb{Z}$.
\end{lem}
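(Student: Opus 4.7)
The plan is to argue by contradiction and extract a parity obstruction from the fact that $\mathfrak{e}=\tfrac{1}{2}\sum_{j=1}^{8} N_j$ has half-integer coefficients in the basis $\{L,E,N_1,\ldots,N_8\}$ of $\mathrm{Pic}(\hat X_g)$. Suppose $cE-\mathfrak{e}$ is effective. Intersecting with the nef class $L$ gives $2c=(cE-\mathfrak{e})\cdot L\geq 0$, so $c\geq 0$. The crucial second observation is that $(cE-\mathfrak{e})\cdot E=0$, and since $E$ is nef by the previous lemma, every irreducible component $R$ of $cE-\mathfrak{e}$ satisfies $R\cdot E=0$.

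The bulk of the argument is then to classify irreducible curves $R\subset \hat X_g$ with $R\cdot E=0$. Writing $R=aL+bE+\gamma$ with $\gamma\in \mathfrak{N}$, the orthogonality $R\cdot E=2a=0$ forces $a=0$. If $R^2\geq 0$, then Hodge index applied to the span of $R$ and $E$ (using $R\cdot E=E^2=0$) makes $R$ proportional to $E$ in $\mathrm{Pic}(\hat X_g)\otimes \mathbb{Q}$, so by primitivity and irreducibility one gets $R=E$. If $R^2=-2$, then $\gamma^2=-2$ in $\mathfrak{N}$; since the coefficients of $\gamma$ in the $N_j$ are either all integers or all half-integers, and in the half-integer case $\gamma^2\leq -4$, we conclude $\gamma=\pm N_{j_0}$ and $R=bE\pm N_{j_0}$. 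Intersecting with $N_{j_0}$ rules out the $+$ sign whenever $R\neq N_{j_0}$, leaving only the classes $R=N_{j_0}$ (with $b=0$) and $R=bE-N_{j_0}$ (with $b\geq 1$, the bound coming from $R\cdot L=2b\geq 0$ and effectivity).

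Thus every irreducible component of $cE-\mathfrak{e}$ lies in the list
$$\{E,\ N_1,\ldots, N_8\}\cup \{bE-N_j : b\geq 1,\ 1\leq j\leq 8\}.$$
All of these classes have \emph{integer} coefficients on each $N_j$ in the basis $\{L,E,N_1,\ldots,N_8\}$, and therefore so does any non-negative integer combination. But the $N_j$-coefficient of $cE-\mathfrak{e}$ is $-\tfrac{1}{2}$ for every $j$, which is the desired contradiction.

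The step requiring most care is the classification of irreducible $R$ with $R\cdot E=0$—specifically the Hodge index reduction to $R=E$ when $R^2\geq 0$, and the half-integer exclusion in $\mathfrak{N}$ when $R^2=-2$. Both are routine given the structure of the Nikulin lattice, so no serious obstacle is expected; the real content is the elementary observation that the half-integer class $\mathfrak{e}$ cannot be built from integer combinations of irreducible curves orthogonal to $E$.
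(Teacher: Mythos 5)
Your proof is correct, but it takes a genuinely different route from the paper's. The paper argues by descent on $c$: it chooses $c>0$ minimal with $cE-\mathfrak{e}$ effective, uses $(cE-\mathfrak{e})^2=-4$ to produce a single integral component $R$ with $\bigl(R\cdot(cE-\mathfrak{e})\bigr)<0$, pins it down as $R=bE-N_{\ell}$ with $b>0$ via the same parity analysis of the coefficients $2c_j$ in $\mathfrak{N}$ that you use, and then peels off $R$ and $N_{\ell}$ to exhibit $(c-b)E-\mathfrak{e}$ as effective, contradicting minimality. You instead decompose the whole effective divisor, use nefness of $E$ to force every irreducible component into $E^{\perp}$, classify all such components (multiples of $E$, the $N_j$, and $bE-N_j$), and finish with an integrality obstruction: each of these classes has integral $N_j$-coordinates, whereas $cE-\mathfrak{e}$ has coordinate $-\tfrac{1}{2}$ on every $N_j$. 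Your version is arguably cleaner, proves something slightly stronger (no effective class orthogonal to $E$ can have half-integral $\mathfrak{N}$-part), and sidesteps the boundary case $c-b=0$ that the paper's minimality argument leaves implicit. One small imprecision: when $R^2>0$ the Hodge index step gives an outright contradiction (the nonzero isotropic class $E$ cannot lie in the negative definite orthogonal complement of $R$) rather than proportionality; only $R^2=0$ yields $R\sim_{\mathbb{Q}}E$. Either way such a component has zero $N_j$-coordinates, so your parity argument is unaffected.
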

\begin{proof}
By intersecting with $L$, we see that $cE-\mathfrak{e}$ is not effective if $c <0$. Suppose there exists an integer $c>0$ such that $cE-\mathfrak{e}$ is effective and we choose $c$ minimal with this property. Since $(cE-\mathfrak{e})^2=-4$, there is an integral component $R$ of $cE-\mathfrak{e}$ with $(R \cdot c E-\mathfrak{e})<0$. Necessarily, $(R)^2=-2$. Write $R=aL+bE+\sum_{j=1}^8 c_j N_j$, as above. We have $(N_j \cdot cE-\mathfrak{e})=1$, so $R \neq N_j$ and $(R \cdot N_j) \geq 0$, implying $c_j \leq 0$, for $j=1, \ldots,  8$. Intersecting $R$ with the nef class $E$ yields $a \geq 0$. Since $R$ is a component of $cE-\mathfrak{e}$, we have that $cE-\mathfrak{e}-R$ is an effective class which we intersect  with $E$, forcing $a=0$. From $(R)^2=-2$, we have $\sum_{j=1}^8 c_j^2=1$. As the integers $2c_j$ all have the same parity, the only possibility is that there exists $\ell$ with $c_{\ell}=-1$ and $c_j=0$ for $j \neq \ell$. Then $R=bE-N_{\ell}$, and intersecting with $L$ shows $b \geq 0$. We have $b>0$, for $-N_{\ell}$ is not effective. But then $cE-\mathfrak{e}-R=(c-b)E-\mathfrak{e}+N_{\ell}$ is effective.  Since $N_{\ell} \cdot ((c-b)E-\mathfrak{e}+N_{\ell})=-1<0$, we necessarily have that $N_{\ell}$ is a component of $(c-b)E-\mathfrak{e}+N_{\ell}$, so that $(c-b)E-\mathfrak{e}$ is effective. This contradicts the minimality of $c$.
\end{proof}

We are now in a position to show that for the hyperelliptic Nikulin surface $\hat{X}_g$ constructed before, the vanishing statements (\ref{pgparatlan}) hold.

\begin{cor}
Let $\hat{X}_g$ and $H$ be as above with $g=2i+3 \geq 11$ and $D \in |H|$ be smooth and irreducible. Then $H^0\Bigl(D,\bigwedge^{i-1} M_{K_D}\otimes K_D^{\otimes 2}\otimes L_D^{\vee}\Bigr)=0$ and $H^1\Bigl(D,\bigwedge^{i-2} M_{K_D}\otimes K_D^{\otimes 2}\otimes L_D^{\vee}\Bigr)=0$.
\end{cor}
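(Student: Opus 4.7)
The plan is to exploit the fact that, on $\hat{X}_g$, the curve $D\in|H|$ is hyperelliptic: the elliptic fibration $|E|$ cuts out a degree-two pencil on $D$, so $D$ is hyperelliptic of genus $g(D)=g-2=2i+1$ with hyperelliptic bundle $A:=\mathcal{O}_D(E_D)$. The standard splitting of the kernel bundle on a hyperelliptic curve (as in \cite[Proposition 3.5]{farkas-popa-mustata}) gives $M_{K_D}\cong (A^{\vee})^{\oplus 2i}$, and since $K_D\cong\mathcal{O}_D(2iE_D)$, we find
$$\bigwedge^{k}M_{K_D}\otimes K_D^{\otimes 2}\otimes L_D^{\vee} \;\cong\; \mathcal{O}_D\bigl((4i-k)E_D-L_D\bigr)^{\oplus\binom{2i}{k}}.$$
Combined with Serre duality on $D$, the two claimed vanishings reduce respectively to
$$H^{0}\!\bigl(D,\,L_D-(i+2)E_D\bigr)=0\quad\text{and}\quad H^{1}\!\bigl(D,\,L_D-(i+1)E_D\bigr)=0.$$

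I would establish each of these by lifting it to $\hat{X}_g$ via the restriction sequence associated to $D\in|H|$, using that $L-jE-H=\mathfrak{e}-jE$. For the first, twisting $0\to\mathcal{O}(-H)\to\mathcal{O}\to\mathcal{O}_D\to 0$ by $L-(i+2)E$ reduces the problem to $H^{0}(\hat{X}_g,L-(i+2)E)=0$. Here I would invoke the preceding lemma showing that $F:=L-\tfrac{g-1}{2}E = L-(i+1)E$ is base point free with $F^{2}=0$, hence nef; then
$$F\cdot\bigl(L-(i+2)E\bigr)=F\cdot(F-E)=-F\cdot E=-2<0,$$
forcing $L-(i+2)E$ to be non-effective.

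For the second vanishing, the analogous restriction sequence obtained by twisting with $L-(i+1)E$ reads
$$0\to\mathcal{O}_{\hat{X}_g}(\mathfrak{e}-(i+1)E)\to\mathcal{O}_{\hat{X}_g}(L-(i+1)E)\to\mathcal{O}_D(L_D-(i+1)E_D)\to 0,$$
and its long exact sequence reduces the problem to $H^{1}(\hat{X}_g,L-(i+1)E)=0$ together with $H^{2}(\hat{X}_g,\mathfrak{e}-(i+1)E)=0$. For the first, $F=L-(i+1)E$ is an elliptic pencil class, hence $h^{0}(F)=2$; moreover $h^{2}(F)=h^{0}((i+1)E-L)^{\vee}=0$ since $((i+1)E-L)\cdot L=-2i-2<0$ while $L$ is nef; as $\chi(F)=2$, this forces $h^{1}(F)=0$. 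For the second, Serre duality gives $H^{2}(\hat{X}_g,\mathfrak{e}-(i+1)E)\cong H^{0}(\hat{X}_g,(i+1)E-\mathfrak{e})^{\vee}$, which vanishes by the immediately preceding lemma asserting that $cE-\mathfrak{e}$ is never effective for any $c\in\mathbb{Z}$.

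No individual step should be genuinely hard, since all the heavy lattice-theoretic work has already been carried out in the preceding chain of lemmas. The main technical point to get right is the bookkeeping: one must choose the twist in the restriction sequence so that the boundary terms on $\hat{X}_g$ are exactly controlled by the non-effectiveness results proved above, and then apply the kernel bundle splitting together with Serre duality on $D$ to convert the two Koszul-type vanishings into the two surface-level cohomological statements.
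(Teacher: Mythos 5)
Your reduction via the hyperelliptic splitting $M_{K_D}\cong \mathcal{O}_D(-E_D)^{\oplus 2i}$ and the lift to $\hat{X}_g$ through restriction sequences is essentially the paper's own argument, transposed by Serre duality on $D$, and the two surface-level statements you target are (as a pair) the right ones. One bookkeeping slip first: since $K_D\cong 2iE_D$, Serre duality sends the \emph{first} vanishing ($k=i-1$, an $H^0$) to $H^1(D,L_D-(i+1)E_D)=0$ and the \emph{second} ($k=i-2$, an $H^1$) to $H^0(D,L_D-(i+2)E_D)=0$ --- the opposite of your ``respectively''. This is harmless, since you prove both statements anyway.

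The genuine gap is in the first of your two surface arguments. The restriction sequence twisted by $L-(i+2)E$ reads
$$0\to \mathcal{O}_{\hat{X}_g}\bigl(\mathfrak{e}-(i+2)E\bigr)\to\mathcal{O}_{\hat{X}_g}\bigl(L-(i+2)E\bigr)\to\mathcal{O}_D\bigl(L_D-(i+2)E_D\bigr)\to 0,$$
so concluding $H^0(D,L_D-(i+2)E_D)=0$ requires not only $H^0(\hat{X}_g,L-(i+2)E)=0$ (which you prove correctly by intersecting with the nef class $F=L-(i+1)E$) but also $H^1\bigl(\hat{X}_g,\mathfrak{e}-(i+2)E\bigr)=0$; your phrase ``reduces the problem to'' silently drops this boundary term, even though you do track the analogous term in your second argument. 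The missing vanishing is true and follows from the same toolkit: $(\mathfrak{e}-(i+2)E)^2=-4$ gives $\chi=0$; $h^0(\mathfrak{e}-(i+2)E)=0$ because this class meets the nef class $L$ negatively; and $h^2(\mathfrak{e}-(i+2)E)=h^0((i+2)E-\mathfrak{e})=0$ by the preceding lemma that $cE-\mathfrak{e}$ is never effective --- this is exactly the computation the paper performs (in its un-dualized formulation it \emph{is} the proof of the second vanishing). With that line added your argument closes up; the remaining steps, namely $h^1(L-(i+1)E)=0$ for the primitive elliptic pencil class $F$ and $h^2(\mathfrak{e}-(i+1)E)=0$ from non-effectivity of $(i+1)E-\mathfrak{e}$, are correct.
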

\begin{proof}
For the first vanishing,  we need to show that $H^0\Bigl(D, \bigwedge^{i-1} M_{K_D}\otimes K_D\otimes \mathfrak{e}_D^{\vee}\Bigr)=0$. As in \cite[Prop.\ 3.5]{farkas-popa-mustata}, the kernel bundle $M_{K_D}$ splits into a direct sum of line bundles and
we have $$\bigwedge^{i-1} M_{K_D} \simeq \mathcal{O}_D\Bigl(-(i-1)E_D\Bigr)^{\oplus \binom{g-3}{i-1}}.$$ Using that $K_D \simeq (g-3)E_{D}$, it suffices to show $h^0(D,\mathcal{O}_D((i+1)E_D-\mathfrak{e}_D))=0$. From the above lemmas, $h^0(\hat{X}_g,\mathcal{O}_{\hat{X}_g}((i+1)E-\mathfrak{e}))=0$ and the primitive class $L-(i+1)E$ is base point free, that is, it is represented by a smooth elliptic curve. Thus $$H^1\bigl(\hat{X}_g,\mathcal{O}_{\hat{X}_g}((i+1)E-L)\bigr)=H^1\bigr(\hat{X}_g,\mathcal{O}_{\hat{X}_g}((i+1)E-\mathfrak{e}-H)\bigr)=0$$
and the claim follows.

The second vanishing boils down to $h^1\bigl(D, \mathcal{O}_D((i+2)E_D-\mathfrak{e}_D)\bigr)=0$. The class $(i+2)E-\mathfrak{e}$ has self-intersection $-4$ and satisfies $H^0(\hat{X}_g, \mathcal{O}_{\hat{X}_g}((i+2)E-\mathfrak{e})\bigr)=0$ by the above lemma. Moreover $H^2\bigl(\hat{X}_g, \mathcal{O}_{\hat{X}_g}((i+2)E-\mathfrak{e})\bigr)=0$, since $\mathfrak{e}-(i+2)E$ is not effective, having negative intersection with $L$.  Also $H^1\bigl(\hat{X}_g, \mathcal{O}_{\hat{X}_g}((i+2)E-\mathfrak{e})\bigr)=0$ and $H^2\bigl(\hat{X}_g,\mathcal{O}_{\hat{X}_g}((i+2)E-\mathfrak{e}-H)\bigr)=0$, since $H-(i+2)E+\mathfrak{e}$ is not effective. Indeed, otherwise the smooth elliptic curve $E$ would be a subdivisor of $L-(i+1)E$, which is also the class of a smooth elliptic curve.
\end{proof}

\noindent \emph{Proof of Theorem \ref{hypnik}.}
By deformation to the hyperelliptic case $\hat{X}_g$, it then follows that there is a nonempty, open subset of the moduli space $\F_g^{\mathfrak{N}}$ of Nikulin surfaces $X_g$, such that  $$H^0\Bigl(D,\bigwedge^{i-1} M_{K_D}\otimes K_D^{\otimes 2}\otimes L_D^{\vee}\Bigr)=0 \ \mbox{ and } \ H^1\Bigl(D,\bigwedge^{i-2} M_{K_D}\otimes K_D^{\otimes 2}\otimes L_D^{\vee}\Bigr)=0,$$ for a general $D \in |H|$.
 In particular, $K_{i-1,2}(D,-L_D,K_D)=0$ and $K_{i-3,3}(D,-L_D,K_D)=0$. Putting everything together, we obtain $K_{\frac{g-3}{2},1}(C,K_C \otimes \mathfrak{e}_C)=K_{\frac{g-7}{2},2}(C,K_C\otimes \mathfrak{e}_C)=0$, for a general curve $C \in |L|$ general on such $X_g$. \hfill $\Box$

\section{The syzygy divisor on $\cM_{g,2g}$ and the divisorial case of the Green-Lazarsfeld Conjecture}

The goal of this section is to prove Theorem \ref{osztaly}. We use the convention that if $\ms$ is a Deligne-Mumford stack, then we denote by $\cM$ its coarse moduli space. All Picard and Chow groups of stacks and coarse moduli spaces considered in this section are with rational coefficients. We recall \cite{AC} that the coarsening map $\ms_{g,n} \rightarrow \mm_{g,n}$ induces an isomorphism between (rational) Picard groups $\mbox{Pic}(\mm_{g,n})\stackrel{\cong}\rightarrow \mbox{Pic}(\ms_{g,n})$. In particular, we shall stop distinguishing between divisor classes on $\ms_{g,n}$ and on $\mm_{g,n}$ respectively.
\vskip 3pt

 Recall that we have set $g=2i+1$ and $d=2g$ and defined $\mathfrak{Syz}$ to be the divisor in $\cM_{g,2g}$  of pointed curves $[C, x_1, \ldots, x_{2g}]$ such that
$$K_{i-1,2}\Bigl(C, \OO_C(x_1+\cdots+x_{2g})\Bigr)\neq 0.$$ To ease notation, we set $L:=\OO_C(x_1+\cdots+x_{2g})$. We first express determinantally over moduli the condition that a point belong to the divisor $\mathfrak{Syz}$. Let $M_{\PP^g}:=\Omega_{\PP^g}^1(1)$ be the universal rank $g$ kernel bundle over $\PP^g$. Then $K_{i-1,2}(C,L)\neq 0$ if and only if $H^0\bigl(\PP^{g}, \bigwedge^{i-1} M_{\PP^g}\otimes \mathcal{I}_{C/\PP^g}(2)\bigr)\neq 0$, or equivalently, the morphism between the following vector spaces of the same dimension
$$\varphi_{[C,L]}: H^0\Bigl(\PP^g, \bigwedge^{i-1} M_{\PP^g}(2)\Bigr)\longrightarrow H^0\Bigl(C, \bigwedge^{i-1} M_L\otimes L^{\otimes 2}\Bigr)$$
is an isomorphism. The statement that the two vector spaces above have the same dimension, follows because on one hand, it is well-known that
$$h^0\Bigl(\PP^{g}, \bigwedge^{i-1} M_{\PP^g}(2)\bigr)=h^0\Bigl(\PP^{g}, \Omega_{\PP^g}^{i-1}(i+1)\Bigr)=i{2i+3\choose i},$$
on the other hand, it is also known \cite{lazarsfeld-kernel} that $M_L$ is a stable vector bundle on $C$, hence
$H^1\bigl(C, \bigwedge^{i-1} M_L \otimes L^{\otimes 2}\bigr)=0$, for slope reasons; since $\mu(M_L)=-2$, one computes that
$$h^0\Bigl(C, \bigwedge^{i-1} M_L\otimes L^{\otimes 2}\Bigr)= {2i+1\choose i-1}\Bigl((i-1)\mu(M_L)+2d+1-g\Bigr)=i{2i+3\choose i+1}.$$
To express $\mathfrak{Syz}$ as a degeneracy locus of two vector bundles over the stack $\textbf{M}_{g,2g}$, we first introduce the following diagram of moduli stacks of pointed curves

\[
\begin{CD}
{\mathcal X}@>{v}>> \textbf{M}_{g,2g}\\
@VV{f}V@VV{\pi}V\\
{\textbf{C}}_g^{}@>{u}>>\textbf{M}_{g}\\
\end{CD}
\]
where $u:\textbf{C}_g\rightarrow \textbf{M}_g$ is the universal curve.
For $j=1,\ldots, 2g$, let
$q_j:\textbf{M}_{g,2g}\longrightarrow {\mathcal X}$  be the section of the universal family $v$ defined by
$q_j([C, x_1, \ldots, x_{2g}]):=\bigl([C, x_1, \ldots, x_{2g}], x_j\bigr)$. We set $E_j:={\rm Im}(q_j)$.
For $1\leq j\leq 2g$, we denote by $\psi_j\in \mbox{Pic}(\textbf{M}_{g,2g})$ the cotangent class corresponding to the $j$-th marked point, that is, characterized by fibres
$\psi_j\bigl([C,x_1, \ldots,x_{2g}]\bigr):=T_{x_j}^{\vee}(C)$. Finally, $\lambda:=c_1(u_*(\omega_u))\in \mbox{Pic}(\textbf{M}_g)$ denotes the Hodge class.
\vskip 3pt

For $\ell\geq 1$, we set $\mathbb F_{\ell}:=v_*\bigl(\OO_{\cX}(\ell E_1+\cdots+\ell E_{2g})\bigr)$. By Grauert's Theorem it follows that $\mathbb F_{\ell}$ is a vector bundle of rank $(2i+1)(2\ell-1)+1$. We define the kernel vector bundle over $\mathcal{X}$ via the evaluation  sequence:
$$0\longrightarrow \cM\longrightarrow v^*\mathbb F_{1}\longrightarrow \OO_{\cX}(E_1+\cdots+E_{2g})\longrightarrow 0.$$
Clearly,
$\cM_{| v^{-1}([C, x_1, \ldots, x_{2g}])}=M_L$, where we recall that we have set $L=\OO_C(x_1+\cdots+x_{2g})$. Next, for integers $p\geq 0$ and $q\geq 2$, we introduce the vector bundle
$$\G_{p,q}:=v_*\Bigl(\bigwedge^p \cM\otimes \OO_{\cX}(qE_1+\cdots +qE_{2g})\Bigr).$$
Observe that $\G_{0,q}=\mathbb F_q$. Using the vanishing $H^1\bigl(C, \bigwedge^p M_L\otimes L^{\otimes q}\bigr)=0$ valid for each line bundle $L\in \mbox{Pic}^{2g}(C)$ and integer $q\geq 2$, we conclude that $\G_{p,q}$ is locally free over $\textbf{M}_{g,2g}$. Furthermore, there are exact sequences of vector bundles
\begin{equation}\label{seqg}
0\longrightarrow \G_{p,q}\longrightarrow \bigwedge ^p\G_{0,1}\otimes \G_{0,q}\longrightarrow \G_{p-1,q+1}\longrightarrow 0,
\end{equation}
globalizing the corresponding exact sequences at the level of each individual curve. Following the path indicated in \cite{Fa}, for each $p\geq 0$ and $q\geq 2$, we define a vector bundle $\H_{p,q}$ over $\textbf{M}_{g,2g}$ such that
$$\H_{p,q}\bigl([C, x_1, \ldots, x_{2g}]\bigr):=H^0\Bigl(\PP^{g}, \bigwedge^p M_{\PP^g}(q)\Bigr),$$
where the last identification takes into account the embedding $\phi_L:C\hookrightarrow \PP^g$. To define the bundles $\H_{p,q}$, we proceed inductively. First, we set $\H_{0,1}:=\G_{0,1}$ and then $\H_{0,q}:=\mbox{Sym}^q \ \H_{0,1}$. Then, having defined $\H_{j,q}$ for all $j<p$, we define $\H_{p,q}$ via the following exact sequence:
\begin{equation}\label{seqh}
0\longrightarrow \H_{p,q}\longrightarrow \bigwedge ^p\H_{0,1}\otimes \H_{0,q}\longrightarrow \H_{p-1,q+1}\longrightarrow 0.
\end{equation}
There exist vector bundle morphism $\varphi_{p,q}: \H_{p,q}\rightarrow \G_{p,q}$, which over each fibre corresponding to an embedding $C\stackrel{|L|}\hookrightarrow \PP^g$ are the restriction maps at the level of twisted holomorphic forms:
 $$H^0\Bigl(\PP^g, \bigwedge^p M_{\PP^g}(q)\Bigr)\rightarrow H^0\Bigl(C, \bigwedge^p M_L\otimes L^{\otimes q}\Bigr).$$
 Note that $\mbox{rk}(\H_{i-1,2})=\mbox{rk}(\G_{i-1,2})$ and
$\varphi=\varphi_{i-1,2}:\H_{i-1,2}\rightarrow \G_{i-1,2}$ is the morphism whose degeneracy locus is precisely the divisor $\mathfrak{Syz}$.
The following formulas are standard, see \cite{HM}:

\begin{lem}\label{szabalyok}
Keeping the notation from above, the following identities hold:
\begin{enumerate}
\item {\rm (i)} $v_*(f^*c_1(\omega_u)^2)=12\lambda$.
\item {\rm (ii)} $v_*(v^*\lambda\cdot f^*c_1(\omega_u))=(2g-2)\lambda$.
\item {\rm (iii)} $v_*([E_j]\cdot v^*\lambda)=\lambda$.
\item {\rm (iv)} $v_*([E_j]\cdot f^*c_1(\omega_u))=\psi_j$, for $1\leq j\leq 2g$.
\item {\rm (v)} $v_*([E_j]^2)=-\psi_j$, for $1\leq j\leq 2g$.
\end{enumerate}
\end{lem}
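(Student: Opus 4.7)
The plan is to identify $\cX$ with the universal curve over $\textbf{M}_{g,2g}$ and then reduce each identity to standard input: Mumford's formula, the projection formula, and the self-intersection formula for a section. Since $\pi \circ v = u \circ f$, the square
$$
\begin{CD}
\cX @>f>> \textbf{C}_g \\
@VvVV @VVuV \\
\textbf{M}_{g,2g} @>\pi>> \textbf{M}_g
\end{CD}
$$
is Cartesian, so $\cX \cong \textbf{M}_{g,2g} \times_{\textbf{M}_g} \textbf{C}_g$ is the universal curve over $\textbf{M}_{g,2g}$, and the relative dualizing sheaf satisfies $\omega_v \cong f^*\omega_u$. Flat base change then yields $v_* \circ f^* = \pi^* \circ u_*$.

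For (i), Mumford's relation $u_*\bigl(c_1(\omega_u)^2\bigr) = 12\lambda \in \mathrm{Pic}(\textbf{M}_g)$ combined with base change gives $v_*\bigl(f^*c_1(\omega_u)^2\bigr) = \pi^*(12\lambda) = 12\lambda$. For (ii), the projection formula yields $v_*\bigl(v^*\lambda \cdot f^*c_1(\omega_u)\bigr) = \lambda \cdot v_*\bigl(f^*c_1(\omega_u)\bigr)$; since $c_1(\omega_u)$ restricts to $K_C$ of degree $2g-2$ on each fiber of $u$, one has $u_*c_1(\omega_u) = 2g-2$ and hence $v_*(f^*c_1(\omega_u)) = 2g-2$ by base change. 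For (iii), the projection formula combined with the fact that $E_j$ is a section of $v$ (so $v_*[E_j] = 1$) gives directly $v_*\bigl([E_j]\cdot v^*\lambda\bigr) = \lambda$.

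For (iv) and (v), I would use the identity $v_*\bigl([E_j] \cdot \alpha\bigr) = q_j^*(\alpha)$, valid for any class $\alpha$ on $\cX$, which comes from $[E_j] = q_{j*}(1)$ combined with $v \circ q_j = \mathrm{id}$ and the projection formula. Applied to $\alpha = f^*c_1(\omega_u)$, this gives
$$v_*\bigl([E_j]\cdot f^*c_1(\omega_u)\bigr) = q_j^*f^*c_1(\omega_u) = q_j^*c_1(\omega_v) = \psi_j,$$
where the last equality is the definition of $\psi_j$. Applied to $\alpha = [E_j]$, the same identity gives $v_*([E_j]^2) = q_j^*[E_j]$, and the standard adjunction/self-intersection formula for the section $q_j$ (whose normal bundle is the restriction of the relative tangent bundle $T_v$) reads $q_j^*[E_j] = c_1(q_j^*T_v) = -\psi_j$.

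There is no serious obstacle: the entire lemma is a compendium of standard formulas on the universal curve. The only care required is in checking that the fiber product identification $\cX \cong \textbf{M}_{g,2g}\times_{\textbf{M}_g}\textbf{C}_g$ really gives $\omega_v = f^*\omega_u$ (so that Mumford's relation pulls back cleanly) and in recording the correct sign in $q_j^*[E_j] = -\psi_j$.
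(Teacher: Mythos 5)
Your proof is correct; the paper gives no argument at all for this lemma, simply citing the standard formulas from Harris--Mumford, and your derivation (base change $v_*f^*=\pi^*u_*$ plus Mumford's relation for (i)--(ii), the projection formula with $v_*[E_j]=1$ for (iii), and $v_*([E_j]\cdot\alpha)=q_j^*\alpha$ together with $\omega_v\cong f^*\omega_u$ and the self-intersection formula for the section $q_j$ for (iv)--(v)) is exactly the standard proof being invoked.
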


Using the exact sequence (\ref{seqg}) and (\ref{seqh}), we shall reduce the calculation of the first Chern class of $\G_{p,q}$ and $\H_{p,q}$ respectively to the case $p=0$.  We have the following result in that case:

\begin{prop}\label{chernl}
For $\ell\geq 1$, the following relation holds in $CH^1(\textbf{M}_{g,2g})$:
$$c_1(\G_{0,\ell})=\lambda-{\ell+1\choose 2}\sum_{j=1}^{2g} \psi_j.$$
\end{prop}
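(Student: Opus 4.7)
The plan is to apply Grothendieck--Riemann--Roch to the morphism $v:\mathcal X\to \textbf{M}_{g,2g}$ and to the line bundle $\OO_{\cX}(\ell E_1+\cdots+\ell E_{2g})$. Since for every $[C,x_1,\ldots,x_{2g}]\in \textbf{M}_{g,2g}$ the line bundle $L^{\otimes \ell}=\OO_C(\ell x_1+\cdots+\ell x_{2g})$ is non-special for $\ell\geq 1$, the higher direct images $R^iv_*\OO_{\cX}(\ell D)$ vanish for $i>0$ (where $D:=E_1+\cdots+E_{2g}$), so that $v_!\OO_{\cX}(\ell D)=\G_{0,\ell}$ and GRR gives
\begin{equation*}
c_1(\G_{0,\ell})=v_*\Bigl[e^{\ell D}\cdot \mathrm{td}(T_v)\Bigr]_{(2)}=v_*\Bigl[\frac{\ell^2 D^2}{2}-\frac{\ell D\cdot c_1(\omega_v)}{2}+\frac{c_1(\omega_v)^2}{12}\Bigr].
\end{equation*}
Here $\omega_v$ is the relative dualizing sheaf of $v$, and by the cartesianity of the diagram defining $\mathcal X$ we have $c_1(\omega_v)=f^*c_1(\omega_u)$.

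Next I would push forward each of the three terms using Lemma \ref{szabalyok}. First, the relative canonical square term yields $\frac{1}{12}v_*(f^*c_1(\omega_u)^2)=\lambda$ by (i). For the remaining two, the key geometric observation is that the sections $q_1,\ldots,q_{2g}$ are pairwise disjoint on $\textbf{M}_{g,2g}$ (since the marked points $x_j$ are required to be distinct), so $E_j\cdot E_k=0$ for $j\neq k$. Consequently $D^2=\sum_{j=1}^{2g} E_j^2$ and $D\cdot f^*c_1(\omega_u)=\sum_{j=1}^{2g} E_j\cdot f^*c_1(\omega_u)$. Applying (v) and (iv) term by term gives
\begin{equation*}
\tfrac{\ell^2}{2}v_*(D^2)=-\tfrac{\ell^2}{2}\sum_{j=1}^{2g}\psi_j, \qquad -\tfrac{\ell}{2}v_*\bigl(D\cdot f^*c_1(\omega_u)\bigr)=-\tfrac{\ell}{2}\sum_{j=1}^{2g}\psi_j.
\end{equation*}
Assembling, $c_1(\G_{0,\ell})=\lambda-\frac{\ell^2+\ell}{2}\sum_{j=1}^{2g}\psi_j=\lambda-\binom{\ell+1}{2}\sum_{j=1}^{2g}\psi_j$, as claimed.

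There is no real obstacle here: the computation is a textbook application of GRR and the only point requiring attention is verifying that the sections $E_j$ are mutually disjoint and that no boundary corrections intervene, both of which hold because we work on the open stack $\textbf{M}_{g,2g}$ rather than on its Deligne--Mumford compactification. The one thing one should double-check before writing down the final display is the sign convention $v_*(E_j^2)=-\psi_j$ in (v), which, combined with the sign in the Todd expansion, produces the characteristic coefficient $\binom{\ell+1}{2}$ in the formula.
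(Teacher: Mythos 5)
Your proof is correct and follows exactly the paper's argument: apply GRR to $v$ and $\OO_{\cX}(\ell E_1+\cdots+\ell E_{2g})$, use the vanishing of $R^1v_*$, and push forward the degree-two term via Lemma \ref{szabalyok}. The only difference is cosmetic: you make explicit the disjointness $E_j\cdot E_k=0$ for $j\neq k$, which the paper leaves implicit in "applying repeatedly Lemma \ref{szabalyok}."
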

\begin{proof}We apply Grothendieck-Riemann-Roch to the proper morphism of stacks $v:\cX\rightarrow \textbf{M}_{g,2g}$ and to the sheaf $\L:= \OO_{\cX}(E_1+\cdots+E_{2g})$. Note that $R^1v_*(\L^{\otimes \ell})=0$, therefore we can write:
$$c_1(\G_{0, \ell})=v_*\Bigl[\Bigl(1+\ell\ \sum_{j=1}^{2g} [E_j]+\frac{\ell^2}{2}\Bigl(\sum_{j=1}^{2g} [E_j]\Bigr)^2+\cdots\Bigr)\cdot \Bigl(1-\frac{f^*c_1(\omega_u)}{2}+\frac{f^*c_1^2(\omega_u)}{12}+\cdots\Bigr)\Bigr]_2.$$
Applying repeatedly Lemma \ref{szabalyok}, we get the claimed formula.
\end{proof}

We now  prove Theorem \ref{osztaly}. The morphism $\varphi:\H_{i-1,2}\rightarrow \G_{i-1,2}$ degenerates along $\mathfrak{Syz}$. In the course of proving Theorem \ref{secgen}, we have exhibited a pair $[C,L]\in \mathfrak{Pic}^{2g}_g$ with $K_{i-1,2}(C,L)=0$. Therefore $\varphi$ is generically non-degenerate.
In the next proof, we shall use the formulas
$$c_1\bigl(\mbox{Sym}^n(E)\bigr)={r+n-1\choose r}c_1(E) \ \mbox{ and } \ c_1\bigl(\bigwedge^n E\bigr)={r-1\choose n-1}c_1(E)$$ valid for a vector bundle $E$ of rank $r$ on any stack or variety.

\vskip 5pt

\noindent \emph{Proof of Theorem \ref{osztaly}.} From the discussion above, $[\mathfrak{Syz}]=c_1(\G_{i-1,2}-\H_{i-1,2})\in CH^1(\textbf{M}_{2g,g})$. We compute both  Chern classes via the exact sequences (\ref{seqg}) and (\ref{seqh}) respectively, coupled with Proposition \ref{chernl} and write
$$c_1(\G_{i-1,2})=\sum_{\ell=0}^{i-1} (-1)^{\ell}\ c_1\Bigl(\bigwedge^{i-\ell-1} \G_{0,1}\otimes \G_{0, \ell+2}\Bigr)$$
$$=\sum_{\ell=0}^{i-1} (-1)^{\ell}\Bigl[ \mbox{rk}(\G_{0, \ell+2})\cdot{2i+1\choose i-\ell-2} c_1(\G_{0,1})+ {2i+2\choose i-\ell-1} c_1(\G_{0, \ell+2})\Bigr]$$
$$=\sum_{\ell=0}^{i-1} (-1)^{\ell}\Bigl[{2i+1\choose i-\ell-2}\Bigl((2i+1)(2\ell+3)+1\Bigr)c_1(\G_{0,1})+{2i+2\choose i-\ell-1}\Bigl(\lambda-{\ell+3\choose 2}\sum_{j=1}^{2g}\psi_j\Bigr)\Bigr]=$$
$$={2i \choose i}\Bigl(\frac{4i^3+5i^2-4i-2}{(i+1)(i+2)}\cdot \lambda - \frac{8i^3+13i^2-i-2}{2(i+1)(i+2)}\cdot \sum_{j=1}^{2g} \psi_j\Bigr).$$
On the other hand, recalling that $\H_{0, \ell+2}=\mbox{Sym }^{\ell+2}\H_{0,1}$ and that $\H_{0,1}=\G_{0,1}$, we have
$$c_1(\H_{i-1,2})=\sum_{\ell=0}^{i-1} (-1)^{\ell}\ c_1\Bigl(\bigwedge^{i-\ell-1} \H_{0,1}\otimes \H_{0, \ell+2}\Bigr)=$$$$\sum_{\ell=0}^{i-1}(-1)^{\ell}\Bigl[{2i+2\choose i-\ell-1}c_1(\H_{0, \ell+2})+{2i+1\choose i-\ell-2}\cdot \mbox{rk}(\H_{0, \ell+2})\cdot c_1(\H_{0,1})\Bigr]$$
$$=\sum_{\ell=0}^{i-1} (-1)^{\ell}\Bigl[{2i+2\choose i-\ell-1}{2i+3+\ell\choose \ell+1}c_1(\H_{0,1})+{2i+1 \choose i-\ell-2}{2i+3+\ell\choose \ell+2}c_1(\H_{0,1})\Bigr]
$$
$$=
{2i\choose i} \frac{i(2i+1)(2i+3)}{(i+1)(i+2)} \cdot \Bigl(\lambda - \sum_{j=1}^{2g} \psi_j \Bigr).$$
Computing the difference $c_1(\G_{i-1,2}-\H_{i-1,2})$, we obtained the claimed formula for the class $[\mathfrak{Syz}]$. \hfill $\Box$

\vskip 4pt

Finally, we explain how the divisorial case $d=2g$ of the G-L Secant Conjecture (Theorem \ref{diveset}) implies the conjecture for line bundles of extremal degree $d=2g+p+1-\mbox{Cliff}(C)$ on a  general curve $C$. The argument proceeds in two steps.

\vskip 5pt

\noindent \emph{Proof of Theorem \ref{hatareset} in the case $d\neq 2g+1$.}
We begin with the case of odd genus and take a smooth curve $C$ of genus $g=2i+1$ of Clifford index $i$, then set $d:=3i+p+3$, where $i\geq p+1$. In particular, inequality (\ref{ineq1}) is an equality, while inequality (\ref{ineq2}) is satisfied as well.

\vskip 3pt
 We start with a line bundle $L\in \mbox{Pic}^d(C)$  and let us assume that
$K_{p,2}(C,L)\neq 0$. We set $\xi:=L- K_C\in \mbox{Pic}^{p-i+3}(C)$. By using Lemma \ref{reduction-by-one},  for every effective divisor
$D\in C_{i-p-1}$, we obtain that $K_{i-1,2}(C, L(D))\neq 0$. Theorem \ref{diveset} implies $L+D\in K_C+C_{i+1}-C_{i-1}$, that is,
$$\xi+C_{i-p-1}\subset C_{i+1}-C_{i-1}.$$
This implies that $\mbox{dim } V_{g-p-3}^{g-p-4}(2K_C-L)\geq i-p-1$, which is contradiction, hence $K_{p,2}(C,L)=0$.

\vskip 4pt

Assume now that $C$ is a curve of even genus $g=2i$ and $\mbox{Cliff}(C)=i-1$. We grant Theorem \ref{even}. Out of this, we derive Theorem \ref{hatareset} for all remaining cases. Assume $d=3i+p+2$, where $i> p+1$ (the case $i=p+1$ corresponds to $d=2g+1$, which is our hypothesis).

If $L\in \mbox{Pic}^d(C)$ satisfies $K_{p,2}(C,L)\neq 0$, then by Lemma \ref{reduction-by-one}, we obtain $K_{i-1,2}(C,L(D))\neq 0$, for each divisor $D\in C_{i-p-1}$. Since  $\mbox{deg}(L(D))=4i+1$ we can apply Theorem \ref{even} in that case and conclude that $L-K_C+C_{i-p-1}\subset C_{i+1}-C_{i-2}$, that is, $\mbox{dim } V_{g-p-3}^{g-p-4}(2K_C-L)\geq i-p-1$, which again is assumed not to happen.
\hfill $\Box$

\vskip 4pt

Theorem \ref{even}, that is, the situation when $g$ is even and $d=2g+1$ can be viewed as a limit case of the divisorial case dealt with by Theorem \ref{diveset}.

\vskip 3pt

\noindent \emph{Proof of Theorem \ref{even}.} Let $C$ be a general curve of genus $g=2i$ and  $L\in \mbox{Pic}^{4i+1}(C)$ a line bundle with $K_{i-1,2}(C,L)\neq 0$. Write $L=\OO_C(x_1+\cdots+x_{4i+1})$, for distinct points $x_j\in C$, and aim to show that
 $$L-K_C\in C_{i+1}-C_{i-2}.$$
We fix general points $x,y\in C$ and let $X:=C\cup E$ be the nodal curve of genus $2i+1$ obtained by attaching to $C$ at the points $x$ and $y$ a rational curve $E$. Since $C$ has only finitely many pencils $\mathfrak{g}^1_{i+1}$ and $x$ and $y$ are chosen generically, $\mbox{gon}(X)=i+2$. On $X$, we consider the line bundle $L_X$ of bidegree $(4i+1,1)$, having restrictions $L_{X|C}=L$ and $L_E=\OO_E(1)$ respectively. Choosing a point $x_{4i+2}\in E-\{x,y\}$, note that
$[X, L_X]$ corresponds to the $(2g+2)$-pointed stable curve $[X, x_1, \ldots, x_{4i+2}]\in \mm_{g+1,2g+2}$ under the Abel-Jacobi map $\mm_{g+1,2g+2}\dashrightarrow \overline{\mathfrak{Pic}}^{2g+2}_{g+1}$.

\vskip 4pt

We show that $K_{i-1,2}(X, L_X)\neq 0$. To that end, we write the duality theorem \cite{green-koszul}
$$K_{i-1,2}(X,L_X)^{\vee}\cong K_{i+1,0}(X,\omega_X,L_X).$$
Using the isomorphisms $H^0(X,L_X)\cong H^0(C,L)$ and $H^0(X, \omega_X)\cong H^0(C,K_C(x+y))$ respectively, we write the following commutative diagram, where the vertical arrows are both injective:
$$
\begin{CD}
{\bigwedge^{i+1} H^0(C,L)\otimes H^0(C,K_C)} @>{d_{i+1,0}}>>{\bigwedge^{i} H^0(C,L)\otimes H^0(C, L\otimes K_C) } \\
@V{}VV @V{}VV \\
{\bigwedge^{i+1} H^0(X,L_X)\otimes H^0(X,\omega_X)} @>{d_{i+1,0}}>> \bigwedge^i H^0(X,L_X)\otimes H^0(X, L_X\otimes \omega_X) \\
\end{CD}
$$
This yields an injection $0\neq K_{i+1,0}(C,K_C,L)\hookrightarrow K_{i+1,0}(X,\omega_X,L_X)$. This implies the non-vanishing $K_{i-1,2}(X,L_X)\neq 0$, that is,
$[X, x_1, \ldots, x_{2g+2}]\in \overline{\mathfrak{Syz}}$, the closure being taken inside $\mm_{g+1,2g+2}$. Recall that we have established the following equality of closed sets  in $\mm_{g+1,2g+2}$:
$$\overline{\mathfrak{Syz}}=\overline{\mathfrak{Sec}}\cup \overline{\mathfrak{Hur}}.$$
As already pointed out, $X$ has maximal gonality, that is, $[X, x_1, \ldots, x_{2g+2}]\notin \overline{\mathfrak{Hur}}$, therefore necessarily $[X, x_1, \ldots, x_{2g+2}]\in \overline{\mathfrak{Sec}}$. Rather than trying to understand the meaning of the secant condition on the singular curve $X$, we recall the identification \cite{farkas-popa-mustata}
$$Y_{i+1}-Y_{i-1}=\Theta_{\bigwedge ^{i-1} Q_Y}\subset \mbox{Pic}^2(Y),$$
valid for any \emph{smooth} curve $Y$ of genus $2i+1$. The right-hand-side of this equality makes sense for \emph{semi-stable} curves as well, in particular for $X$. Thus $L_X\otimes \omega_X^{\vee} \in \Theta_{\bigwedge^{i-1} Q_{\omega_X}}$, and by duality,
\begin{equation}\label{cond3}
H^0\Bigl(X,\bigwedge^{i-1} M_{\omega_X}\otimes \omega_X^{\otimes 2}\otimes L_X^{\vee}\Bigr)\neq 0.
\end{equation}
Observe that $M_{\omega_X |C}=M_{K_C(x+y)}$ and $M_{\omega_X|E}=\OO_E^{\oplus g}$, hence we have the Mayer-Vietoris sequence
$$0\longrightarrow \bigwedge ^{i-1} M_{\omega_X}\otimes \omega_X^{\otimes 2}\otimes L_X^{\vee}\longrightarrow \Bigl(\bigwedge^{i-1} M_{K_C(x+y)}\otimes \bigl(2K_C-L+2x+2y\bigr)\Bigr)\bigoplus \Bigl(\OO_E(-1)^{\oplus {g\choose i-1}}\Bigr)$$
$$\longrightarrow \Bigl(\bigwedge ^{i-1} M_{\omega_X}\otimes \omega_X^{\otimes 2}\otimes L_X^{\vee} \Bigr)_{\bigl |x,y}\rightarrow 0. $$
Condition (\ref{cond3}) yields $H^0\Bigl(C, \bigwedge^{i-1} M_{K_C}\otimes (2K_C-L+x+y)\Bigr)\neq 0$. Using again \cite{farkas-popa-mustata}, this non-vanishing translates into the condition
\begin{equation}\label{cond4}
L-K_C-x-y\in \bigl(C_i-C_{i-1} \bigr) + \bigl(C_{i+1}-C_{i-2}-x-y\bigr)\subset \mbox{Pic}^1(C),
\end{equation}
which holds for \emph{arbitrary} points $x,y\in C$. If, for some $x,y\in C$, the  bundle $L-K_C-x-y$ belongs to the second component of the cycle in (\ref{cond4}), we are finished. Else, if
$L-K_C-C_2\subset C_i-C_{i-1}$, then we find that $\mbox{dim } V_{i+2}^{i+1}(L)\geq 2$. Applying \cite[Lemma 4]{FHL}, we obtain that $V_{i+1}^i(L)\neq \emptyset$, completing the proof.
\hfill $\Box$


\begin{thebibliography}{aaaaaa}

\bibitem[AF1]{aprodu-farkas-green} M. Aprodu and G. Farkas, {\emph{The Green Conjecture for smooth curves lying on arbitrary $K3$ surfaces}}, Compositio Mathematica \textbf{147} (2011), 839-851.
\bibitem[AF2]{aprodu-farkas-covers} M. Aprodu and G. Farkas, {\emph{Green's Conjecture for general covers}}, Compact Moduli Spaces and Vector Bundles (V. Alexeev et al. ed.), Contemporary Mathematics Vol. 564, 2012, 211-226.
\bibitem[AC]{AC} E. Arbarello and M. Cornalba, {\em{The Picard group of the moduli space of curves}}, Topology \textbf{26} (1987), 153-171.    
\bibitem[ACGH]{ACGH} E. Arbarello, M. Cornalba, P. A. Griffiths and J. Harris, {\emph{Geometry of algebraic curves}}, Volume I, Grundlehren der mathematischen Wissenschaften \textbf{267},
Springer-Verlag (1985).
\bibitem[CEFS]{chiodo-eisenbud-farkas-schreyer} A. Chiodo, D. Eisenbud, G. Farkas and F.-O. Schreyer, {\em{Syzygies of torsion bundles and the geometry of the level $\ell$ modular varieties over $\overline{\mathcal{M}}_g$}}, Inventiones Math. \textbf{194} (2013), 73-118.
\bibitem[Dol]{dolgachev} I. Dolgachev, {\em{Mirror symmetry for lattice polarized $K3$ surfaces}}, Journal of Mathematical Sciences \textbf{81} (1996), 2599-2630.
\bibitem[Fa1]{Fa} G. Farkas, {\em{Koszul divisors on moduli spaces of curves}}, American Journal of Mathematics \textbf{131} (2009), 819-869.
\bibitem[Fa2]{Fa2} G. Farkas, {\em{Brill-Noether with ramification at unassigned points}}, Journal of Pure and Applied Algebra \textbf{217} (2013), 1838-1843.
\bibitem[FMP]{farkas-popa-mustata} G. Farkas, M. Musta\c{t}\u{a} and M. Popa, {\em{Divisors on
$\cM_{g, g+1}$ and the Minimal Resolution Conjecture for points on
canonical curves}}, Annales Sci. de L'\'Ecole  Normale Sup\'erieure \textbf{36} (2003), 553-581.
\bibitem[FL]{FL} G. Farkas and K. Ludwig, {\em{The Kodaira dimension of the moduli
space of Prym varieties}}, Journal of the European Mathematical Society \textbf{12} (2010), 755-795.
\bibitem[FV]{farkas-verra-moduli-theta} G. Farkas and A. Verra, {\em{Moduli of theta-characteristics via Nikulin surfaces}}, Mathematische Annalen \textbf{354} (2012), 465-496.
\bibitem[FHL]{FHL} W. Fulton, J. Harris and R. Lazarsfeld, {\em{Excess linear series on an algebraic curve}}, Proceedings of the American Mathematical Society, \textbf{92} (1984), 320-322.
\bibitem[GS]{gar-sarti-even} A. Garbagnati and A. Sarti, {\em{Projective models of $K3$ surfaces with an even set}}, Advances in Geometry \textbf{8} (2008), 413-440.
\bibitem[vGS]{vGS} B. van Geemen and A. Sarti, {\em{Nikulin involutions on $K3$ surfaces}}, Mathematische Zeitschrift \textbf{255} (2007), 731-753.
\bibitem[G]{green-koszul} M. Green, {\em{Koszul cohomology and the cohomology of projective varieties}}, Journal of Differential Geometry \textbf{19} (1984), 125-171.
\bibitem[GL1]{green-lazarsfeld-projective} M. Green and R. Lazarsfeld, {\em{On the projective normality of complete linear series on an algebraic curve}}, Inventiones Math. \textbf{83} (1986), 73-90.
\bibitem[GL2]{green-lazarsfeld-special} M. Green and R. Lazarsfeld, {\em{Special divisors on curves on a $K3$ surface}}, Inventiones Math. \textbf{89} (1987), 357-370.
\bibitem[GL3]{GL3} M. Green and R. Lazarsfeld, {\em{Some results on the syzygies of finite sets and algebraic curves}}, Compositio Mathematica \textbf{67} (1988), 301-314.
\bibitem[HM]{HM} J. Harris and D. Mumford, {\em{On the Kodaira dimension of $\mm_g$}}, Inventiones Math. \textbf{67} (1982), 23-88.
\bibitem[H]{huy-lec-k3} D. Huybrechts, {\em{Lectures on $K3$ surfaces}}, notes available at www.math.uni-bonn.de/people/huybrech/ K3Global.pdf.
\bibitem[K]{kemeny-singular} M. Kemeny, {\em{The Moduli of Singular Curves on K3 Surfaces}}, arXiv:1401.1047.
\bibitem[KS]{koh-stillman} J. Koh and M. Stillman, {\em{Linear syzygies and line bundles on an algebraic curve}}, Journal of Algebra \textbf{125} (1989), 120-132.
\bibitem[La1]{lazarsfeld-bnp} R. Lazarsfeld, {\em{Brill-Noether-Petri without degenerations}}, Journal of Differential Geometry
\textbf{23} (1986), 299-307.
\bibitem[La2]{lazarsfeld-kernel} R. Lazarsfeld, {\em{A sampling of vector bundle techniques in the study of linear series}}, Lectures on Riemann surfaces
(Trieste, 1987), 500-559.
\bibitem[M]{mayer-families} A. Mayer, {\em{Families of $K3$ surfaces}}, Nagoya Mathematical Journal \textbf{48} (1972), 1-17.
\bibitem[SD]{donat} B. Saint-Donat, {\em{Projective models of $K3$ surfaces}}, American Journal of Mathematics \textbf{96} (1974), 602-639.
\bibitem[Sch]{Sch} F.-O. Schreyer, {\em{Syzygies of canonical curves and special linear series}}, Mathematische Annalen \textbf{275} (1986), 105-137.
\bibitem[V1]{voisin-even} C. Voisin, {\em{Green's generic syzygy conjecture
for curves of even genus lying on a $K3$ surface}}, Journal of
European Mathematical Society \textbf{4} (2002), 363-404.
\bibitem[V2]{voisin-odd} C. Voisin, {\em{Green's canonical syzygy conjecture for generic curves of odd genus}},
Compositio Mathematica \textbf{141} (2005), 1163--1190.
\end{thebibliography}
\end{document}